\documentclass[11pt,reqno]{amsart}
\usepackage{fullpage}
\usepackage{amsmath,amsthm,verbatim,amssymb,amsfonts,amscd, graphicx,bbm}
\usepackage{graphics}
\usepackage{mathtools}
\usepackage{mathrsfs}
\usepackage{esint}
\usepackage{xcolor}
\usepackage[pdfstartview=FitH, bookmarksnumbered=true,bookmarksopen=true, colorlinks=true, citecolor=blue, linkcolor=blue,urlcolor=blue]{hyperref}%pdfborder=001, 

\usepackage{varioref} % More descriptive referencing

\newtheorem{thm}{Theorem}[section]
\newtheorem{cor}[thm]{Corollary}
\newtheorem{lem}[thm]{Lemma}
\newtheorem{prop}[thm]{Proposition}

\theoremstyle{definition}

\theoremstyle{remark}
\newtheorem{rem}{Remark}[section]

\numberwithin{equation}{section}

\def\eqdistr{\overset{\text d}{=}}
\def\BUC{\mathrm{BUC}}
\def\bP{\mathbf{P}}

\def\upd{\mathrm{d}}

\DeclareMathSymbol{\C}{\mathalpha}{AMSb}{"43}
\newcommand{\eps}{\epsilon}

\newcommand{\scrL}{\mathscr{L}}

\newcommand{\R}{{\mathbb{R}}}

\newcommand{\myi}{\mathrm{i}}
\newcommand{\dd}{\mathrm{d}}

\newcommand{\Dom}{\textsf{Dom}}
\newcommand{\ol}{\overline}

\newcommand{\bsub}{\begin{subequations}}
\newcommand{\esub}{\end{subequations}$\!$}

\newcommand{\scrF}{\mathscr{F}}

\begin{document}

\title{Stochastic Homogenization of Non-local Hamilton-Jacobi-Bellman equations}

%\author{Wenjia Jing, Qi Zhang\thanks{BIMSA,  Email: \texttt{qzhang@bimsa.com}. }}
\author[W. Jing]{Wenjia Jing}
\address[W. Jing]{Yau Mathematical Sciences Center, Tsinghua University, Beijing 100084 and Beijing Institute of Mathematical Sciences and Applications, Beijing 101408, P.R. China}
\email{wjjing@tsinghua.edu.cn}

\author[Q. Zhang]{Qi Zhang}
\address[Q. Zhang]{Beijing Institute of Mathematical Sciences and Applications, Beijing 101408, P.R. China}
\email{qzhang@bimsa.cn}
%\date{\date}

%\smallbreak \maketitle

%\vskip 0.2truein

\begin{abstract}
        In this paper we apply the method of Kosygina, Rezakhanlou and Varadhan (CPAM 2006) to establish stochastic homogenization of Hamilton-Jacobi-Bellman (HJB) equation with a vanishing non-local integro-differential operator and a convex super-linearly growing Hamiltonian in stationary ergodic random medium. Their method, first designed for homogenization of HJB equations with vanishing Laplacian operator, relies on stochastic optimal control representation of the solution, a key construction of approximate super-correctors and the technique of linking diffusion process to abstract diffusion in random media. We show how the procedures can be carried out for HJB equations with jump-diffusion. In particular, for the construction of approximate super-correctors, we represent the non-local integro-differential operator as the divergence of a regular integral operator acting on the gradient. 

\bigskip
        
        \noindent{\bf Key words}: Stochastic homogenization; Non-local Hamilton-Jacobi-Bellman equation; Jump diffusion processes; random environments.

        \bigskip
        
        \noindent{\bf Mathematics subject classification (MSC 2020)}: 
35B27, 47G20, 49L20.

\end{abstract}

\maketitle

%%%%%%%%%%
\section{Introduction}

We study the homogenization of the following non-local Hamilton-Jacobi-Bellman (HJB) equation with a Hamiltonian that has rapid random oscillations in space:
\begin{equation}\label{HJB}
     \left\{
   \begin{aligned}
   & \partial_t u_{\epsilon}(t,x) = \epsilon^{\alpha-1}  \mathscr{L}u_{\epsilon}(t,x) + H\left(\nabla u_{\epsilon}(t,x), \frac{x}{\epsilon}, \omega\right),\quad & (t,x) \in [0,\infty)\times\mathbb{R}^n, \\
   &  u_{\epsilon}(0,x) = u_{0}(x), \quad & x\in \mathbb{R}^n.
   \end{aligned}
   \right.
\end{equation}
Here, $\omega$ denotes an element of the underlying probability space $(\Omega, \mathscr{F}, \bP)$ on which the Hamiltonian $H$ is defined, and $\mathscr{L}$ is an integro-differential operator defined by, for $\alpha \in (1,2)$,
\begin{equation}\label{eq:diL}
    \mathscr{L} u(x) = \int_{\mathbb{R}^{n}} [u(x+y)-u(x)-\mathbf{1}_{B_{1}(0)}(y)\langle y, \nabla u(x)\rangle ]\frac{K(\hat{y})}{|y|^{n+\alpha}}\,\mathrm{d}y,\quad x\in\mathbb{R}^n,
\end{equation}
where $\mathbf{1}_E$ denotes the indicator function of a set $E$, ${B_{1}(0)}$ is the unit ball of $\R^n$, $n\ge 2$, and $\hat{y}$ denotes the normalized vector $\frac{y}{|y|}$ for $y\ne 0$. If $K\equiv 1$, the operator $\mathscr{L}$ would be the infinitesimal generator of the standard $\alpha$-stable L\'{e}vy process $L^{\alpha}_t$ with L\'evy measure $|y|^{-n-\alpha}\,\,\mathrm{d}y$. As a slight modification, 
$K\in C^\infty(\mathbb{S}^{n-1})$ is assumed to be a nonnegative bounded even and uniformly nondegenerate function in $\mathbb{S}^{n-1}$, and then $\mathcal{L}$ is the generator of an $\alpha$-stable type process with L\'evy measure $K(\hat{y})|y|^{-n-\alpha}\,\,\mathrm{d}y$. See Section \ref{sec:jump diff} for more details on this connection, where the scaling $\eps^{\alpha-1}$ in front of the operator $\scrL$ is also explained. We refer to \cite{A2004} for the standard stochastic analysis of jump diffusion processes. 

Given an initial value $u_{0}$ belonging to $\BUC(\mathbb{R}^n)$, the space of bounded uniformly continuous functions, the above non-local HJB equation is well-posed and admits a unique viscosity solution, see e.g. \cite{GB97, HP98}. In the case of $\alpha = 2$, the equation reduces to the standard second order HJB equation, see, e.g. \cite{HP09}. The homogenization problem concerns the limit, as $\eps\to 0$, of the solution $u_\eps$ to the heterogeneous problem \eqref{HJB}.

The homogenization of Hamilton-Jacobi (HJ) equations was first studied in the seminal work of Lions, Papanicolaou and Varadhan \cite{LPV1987} in the periodic setting for first order equations. The stochastic homogenization of first order HJ equation was first established independently by Souganidis \cite{MR1697831}, and by Tarver and Rezakhanlou \cite{MR1756906}, for convex Hamiltonians using optimal control representations. The stochastic homogenization of second order HJB equations was first established independently by Lions and Souganidis \cite{LS05}, and by Kosygina, Rezakhanlou, and Varadhan \cite{KRV06}, for convex Hamiltonians. Armstrong and Tran \cite{AT15} developed PDE approach to stochastic homogenization of convex HJB equations without using optimal control, completing a program initiated by Lions and Souganidis \cite{MR2664465}. 

In recent years, the homogenization of non-local PDEs associated to jump processes has attracted many attention.  Periodic and stochastic homogenization of jump processes in heterogeneous environments with different settings were investigated by Kassmann, Piatnitski, and Zhizhina in \cite{KPZ19}, and by Chen, Chen, Kumagai, Wang in \cite{CCKW21, CCKW21R}. Periodic homogenization of HJB, with Hamiltonian defined via optimization of integral-differential operators, was studied by Ciomaga, Ghilli and Topp \cite{MR4373878}.
%The periodic homogenization of non-local HJB equations was established independently by Arisawa \cite{A09}, Bardi, Cesaroni, and Topp \cite{BCT20}.
In this paper, we extend the result and the method in \cite{KRV06}, and obtain stochastic homogenization for non-local HJB equations with vanishing non-local term in a stationary random environment. This equation arises from some stochastic control problems with jump diffusions in random environments \cite{OS07, HP98}.

\subsection*{Notations and assumptions.} Throughout the paper, $(\Omega, \mathscr{F},\mathbf{P})$ denotes the probability space on which the random Hamiltonian is defined, and it is assumed that $\Omega$ is a Polish space, and $\mathscr{F}$ is the completion of the corresponding Borel $\sigma$-algebra $\mathcal{B}(\Omega)$. For $y\in \R^n$ and $y\ne 0$, $\hat{y} := y/|y|$ denotes the unit vector in the direction of $y$.

The stationary ergodic random Hamiltonian $H(p,x,\omega)$ is constructed following the standard approach using a group of measure preserving transformations on $(\Omega, \mathscr{F},\mathbf{P})$. More precisely, we assume that there is a family $\{\tau_{y}:\Omega\rightarrow \Omega\}_{y\in\mathbb{R}^n}$ of transformation on $\Omega$ such that, for each $y$, $\tau_y$ preserves the measure $\mathbf{P}$, and, moreover, $\{\tau_y\}_{y\in \R^n}$ satisfies the group property: $\tau_0 = \mathrm{id}_\Omega$ is the identity mapping on $\Omega$, and $\tau_{x+y}=\tau_x\circ \tau_y$ for all $x,y \in \R^n$. 

We assume that the mapping $(y,\omega)\mapsto\tau_{y}\omega $ is jointly measurable from $(\mathbb{R}^{n}, \mathscr{B}(\mathbb{R}^{n}), m_{\R^n}) \times (\Omega,\mathscr{F}, \mathbf{P})$ to $ (\Omega,\mathscr{F}, \mathbf{P})$, where $\mathscr{B}(\mathbb{R}^{n})$ is the Borel $\sigma$-algebra of $\mathbb{R}^n$, and $m_{\R^n}$ is the Lebesgue measure. As a direct consequence, for any $\mathcal{B}(\Omega)$-measurable random variable $f$ on $\Omega$, $\widetilde f(y,\omega):=f(\tau_{y}\omega)$ is jointly measurable on $\R^n\times \Omega$.

We also assume that the measure-preserving transformation group $\{\tau_y\}_{y\in \R^d}$ is ergodic: if $A \in \mathscr{F}$ is invariant under this group of transformations, that is, if $\tau_y A := \{\tau_y \omega\,:\, \omega\in A\}$ equals $A$ for all $y\in \R^d$, then $\bP(A)$ must have value in $\{0,1\}$.

Now we specify our assumptions on the stochastic Hamiltonian $H(p,x,\omega)$. We assume that, for each fixed $p\in\mathbb{R}^n$, $H(p,\cdot,\omega)$ is a stationary ergodic random field on $(\Omega, \mathscr{F}, \bP)$.  
With the set-up of ergodic measure-preserving group of transformations $\{\tau_y\}$, $H$ is specified as follows. Let $H(p,\omega):\mathbb{R}^n \times \Omega \rightarrow \mathbb{R}$ be a random function on $\Omega$, we define $\widetilde{H}(p,x,\omega):=H(p,\tau_{x}\omega)$ for each $x\in \mathbb{R}^n$. The stationarity of $\widetilde H$ is easily seen from the relation
\begin{equation*}
    \widetilde{H}(p,x+y,\omega) = \widetilde{H}(p,x,\tau_y\omega).
\end{equation*}
and the fact that $\tau_y$ preserves measure. By an abuse of notation, we still write $\tilde{H}(p,x,\omega)$ as $H(p,x,\omega)$ in this paper.

The above assumptions on the set-up of the probability space and on the form of $H$  are always invoked and they are referred to as the ``standard stationary ergodic set-up''. 

For the homogenization of \eqref{HJB}, we need \emph{some} of the following hypotheses of $H(p,\omega)$ and of the initial data $u_0$:
\begin{itemize}
\item[{\bf (A1)}] $H$ is \emph{convex} and \emph{super-linear} in $p$. Moreover, there are positive constants $c_1$, $c_2$, $c_3$, $c_4$ and $\beta$, satisfying $1< \beta < \infty$, so that for all $p\in \mathbb{R}^n$ and $\omega\in\Omega$, 
\begin{equation}\label{A1H}
    c_1 |p|^{\beta} - c_2 \leq H(p,\omega) \leq c_3 |p|^{\beta} + c_4.
\end{equation}
This implies that the Lagrangian function $L(q,\omega)$ satisfies 
\begin{equation}\label{A2L}
    c'_1|q|^{\beta'} - c_4 \leq L(q,\omega) \leq c'_3 |q|^{\beta'} + c_2
\end{equation}
for $\beta' = \beta/(\beta-1)$ and some positive constants $c'_1,c'_3$.

\item[{\bf (A2)}] The transformation mapping $x\mapsto H(p,\tau_{x}\omega)$ is uniformly continuous. More precisely, for any $\ell >0$,
\begin{equation}
    \lim_{\delta\to 0}\sup_{|x| \leq \delta}\sup_{|p| \leq \ell} \underset{\omega \in \Omega}{\mathrm{ess}\sup} |H(p,\tau_{x}\omega) - H(p,\omega)| = 0.
\end{equation}
By Legendre transformation and the bounds \eqref{A2L}, the Lagrangian $L(q,\omega)$ satisfies the same type of uniform continuity in space: for any $\ell>0$,
\begin{equation}
    \lim_{\delta\to 0}\sup_{|x| \leq \delta}\sup_{|v| \leq \ell} \underset{\omega \in \Omega}{\mathrm{ess}\sup} |L(v,\tau_{x}\omega) - L(v,\omega)| = 0.
\end{equation}

\item[{\bf (A3)}] The initial value $u_0$ is uniformly continuous on $\mathbb{R}^n$. Note that this implies, for every $\delta>0$ there exists a constant $K_{\delta}>0$ so that for every $x,y \in\mathbb{R}^n$,
\begin{equation}\label{u0C}
    |u_0(x)-u_{0}(y)| \leq K_{\delta} |x-y|+ \delta.
\end{equation}

\item[{\bf (B)}] There exists a modulus of continuity $\gamma:(0,1)\to [0,\infty)$, satisfying $\gamma(\delta)\to 0$ as $\delta\to 0$ and a positive constant $M$, such that, for all $x\in B_\delta(0)$ with $\delta \in (0,1]$, and for all $\omega\in\Omega$,
\begin{equation}\label{AHd}
    H(p,\tau_{x}\omega) \geq (1 + \gamma(\delta))H((1+\gamma(\delta))^{-1}p,\omega) - M\gamma(\delta).
\end{equation}

\item[{\bf (C)}] Assumption {\bf (A1)} holds with $\beta > n$.
\end{itemize}

Throughout the paper, Assumptions ({\bf A1})({\bf A2})({\bf A3}), which are henceforth grouped together and called Assumption ({\bf A}), and the stationary ergodic set-up of $H$ above those assumptions are always invoked. In fact, because only qualitative homogenization is addressed in the paper, we may impose the stronger assumption ``$u_0$ is Lipschitz on $\R^n$'' without loss of generality; see Remark \ref{rem:BLip}. Those assumptions are somewhat standard for stochastic homogenization of HJB equations. 
Assumptions ({\bf B}) and ({\bf C}) are additional ones that will be needed in the construction of super correctors in the homogenization approach. As one may expect, assumption ({\bf C}) relates to the critical exponent for the application of Sobolev embedding. Assumption ({\bf B}) somehow treats the convexity of $H$ in $p$ with its continuity in $x$, and is most easily satisfied if $H$ is of the separation form $H(p,x)=G(p)+V(x)$.

The goal of this paper is to establish a qualitative stochastic homogenization result for the heterogeneous HJB equation \eqref{HJB}. This amounts to finding the \emph{effective Hamiltonian} $\overline{H}(p)$, a convex real-valued function on $\mathbb{R}^n$ with the salient feature that it is deterministic (does not depend on $\omega$) and is homogeneous in space (does not depend on $x$), such that as $\eps\to 0$ the solution $u_\eps$ of \eqref{HJB} converges locally uniformly to the \emph{homogenized} Hamilton-Jacobi equation
\begin{equation}\label{hHJB}
  \left\{
   \begin{aligned}
   & \partial_t \overline{u}(t,x) =  \overline{H}(\nabla \overline{u}(t,x)),\quad & &(t,x) \in [0,\infty)\times\mathbb{R}^n, \\
   &  \overline{u}(0,x) = u_0(x), \quad & &x\in \mathbb{R}^n.
   \end{aligned}
   \right.
\end{equation}
We emphasize that both the characterization of $\overline{H}$ and the proof of almost sure locally uniform convergence of $u_\eps \to \ol{u}$ are problems to be solved. %and is given by (\ref{EffH}) in Section 2.

\subsection{Main result} Now we state our main result in this paper as follows.
\begin{thm}
\label{thm:main}
Under the standard stationary ergodicity set-up and Assumptions {\upshape({\bf A})}, together with either Assumption {\upshape({\bf B})} or Assumption {\upshape({\bf C})}, let $u_{\epsilon}(t,x,\omega)$ be the solution of the HJB equation (\ref{HJB}), with $\alpha\in (1,2)$, and $\overline u(t,x)$ be the solution of the homogenized HJ equation \eqref{hHJB}, with $\ol{H}$ defined in \eqref{EffH}. Then for every $R,T>0$ and $\bP$-almost surely we have
\begin{equation}\label{eq:main}
    \lim_{\epsilon\rightarrow 0}\sup_{t\in[0,T]}\sup_{|x|\leq R}|u_{\epsilon}(t,x,\omega) - \overline u(t,x)|= 0 .
\end{equation}
\end{thm}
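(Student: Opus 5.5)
We follow the scheme of \cite{KRV06}, adapted to jump diffusions. The first step is to represent $u_\eps$ through stochastic optimal control. Write the controlled process $\upd X_s = v_s\,\upd s + \eps^{(\alpha-1)/\alpha}\,\upd L_s$, where $L$ is the $\alpha$-stable type Lévy process with generator $\scrL$. The scaling $\eps^{\alpha-1}$ in \eqref{HJB} is exactly what makes the jump part have generator $\eps^{\alpha-1}\scrL$. We then have
\[
u_\eps(t,x)=\sup_{v}\mathbb{E}\Big[u_0(X_t)-\int_0^t L\big(v_s,\tau_{X_s/\eps}\omega\big)\,\upd s\Big].
\]
After rescaling $x\mapsto x/\eps$, $t\mapsto t/\eps$, the problem becomes a large-time one, $\eps\, u(t/\eps,x/\eps)$, with unit-size noise. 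Since $\alpha>1$, the jump term is of lower order than the drift on the macroscopic scale. By the Lipschitz reduction (Remark \ref{rem:BLip}), the supremum can be restricted to bounded feedback controls, and super-linearity \eqref{A2L} supplies the a priori bounds. By the Hopf--Lax structure of \eqref{hHJB}, it is then enough to prove homogenization for linear data $u_0(x)=\langle p,x\rangle$, that is, convergence of $\eps\,u(t/\eps,0;p)\to t\ol H(p)$ almost surely. The general case follows by the standard localization and comparison argument.

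For the effective Hamiltonian we use the variational formula \eqref{EffH}:
\[
\ol H(p)=\inf_{g}\ \underset{\omega}{\mathrm{ess\,sup}}\ \big[\scrL g + H(p+\nabla g,\omega)\big],
\]
where $g$ ranges over stationary fields with mean-zero gradient in a suitable class. The \emph{lower bound} $\liminf u_\eps\ge\ol u$ comes from the control side. Following \cite{KRV06}, we consider feedback controls $v(\omega)$ on $\Omega$ and the associated environment process $\tau_{X_s}\omega$. We also need invariant measures absolutely continuous with respect to $\bP$, and for this we link the jump diffusion to an abstract diffusion on $\Omega$ whose generator is $\scrL_\omega + v\cdot\nabla_\omega$. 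A minimax (Sion-type) argument, using the convexity of $H$ and linearity in the density, identifies
\[
\sup_{(v,\Phi)\ \text{invariant}}\ \int \big(\langle p,v\rangle - L(v,\omega)\big)\Phi\,\upd\bP
\]
with $\ol H(p)$. The ergodic theorem for the stationary environment process then turns this into the lower bound on $\eps\, u(t/\eps)$.

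The \emph{upper bound} is the main obstacle. It requires approximate super-correctors, that is, for each $\delta>0$ a function $g$ with
\[
\scrL g + H(p+\nabla g,\tau_x\omega)\le \ol H(p)+\delta
\]
in a viscosity sense, together with sublinear growth $\eps\, g(x/\eps)\to 0$ locally uniformly. Plugging $\langle p,x\rangle + g$ into the comparison principle then bounds $u$ from above.

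The minimax gives such a $g$ only as an abstract object: a stationary closed gradient field $G\in L^\beta$ satisfying the inequality in the distributional sense on $\Omega$. We proceed in two steps.

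1. \emph{Making sense of $\scrL g$ from $G$ alone.} Here we use the key representation $\scrL g = \mathrm{div}\,(\mathcal{K}\nabla g)$. We write $u(x+y)-u(x)-\langle y,\nabla u(x)\rangle=\int_0^1\langle y,\nabla u(x+sy)-\nabla u(x)\rangle\,\upd s$. Since $K$ is even, the symmetric cut-off term drops out, and integrating by parts in $y$ along rays shows that $\scrL = \mathrm{div}\circ\mathcal{K}\circ\nabla$. Here $\mathcal{K}$ is a singular integral operator of order $\alpha-1\in(0,1)$ acting on vector fields, given by a kernel homogeneous of degree $-(n+\alpha-1)$. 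Consequently the inequality can be posed for $G$ weakly, with test functions, and it passes to limits under weak convergence in $L^\beta$.

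2. \emph{Regularization.} We mollify $G$ in $x$ at scale $\delta$. Under \textbf{(B)}, the mollified field satisfies the inequality up to an error $\gamma(\delta)$: convexity lets Jensen move the mollifier inside $H$, and \eqref{AHd} absorbs the $x$-shift. This mirrors \cite{KRV06}, since $\mathcal{K}$ commutes with convolution. Under \textbf{(C)}, $\beta>n$, so Morrey's embedding makes $g$ Hölder continuous with sublinear growth almost surely. In that case the inequality, which holds in the distributional sense, upgrades to a viscosity-sense inequality by the standard argument for convex $H$.

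Finally, in both cases sublinearity of $g$ follows from the mean-zero stationary gradient via the ergodic theorem, which closes the upper bound and with it the proof of \eqref{eq:main}.
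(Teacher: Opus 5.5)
\paragraph{Where the proposal matches the paper and where the gap is.} Your lower bound is the paper's argument: feedback drifts $b(\tau_x\omega)$, invariant densities, the ergodic theorem, then Varadhan's trick via Theorem~\ref{thm:regularity}. Since \eqref{EffH} is the supremum over $\mathscr{E}$, no minimax is needed there. Your corrector construction under ({\bf B}) also matches Lemma~\ref{Appv}.

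The gap is the upper bound under ({\bf C}), together with your claim that a comparison principle closes it.
\begin{itemize}
\item \emph{The upgrade step.} You say the distributional inequality $\scrL g+H(p+\nabla g,\tau_x\omega)\le\ol H(p)$ ``upgrades to a viscosity inequality by the standard argument for convex $H$''. The standard argument is mollify-then-Jensen. It leaves the error $\int[H(p+\nabla g(y),\tau_x\omega)-H(p+\nabla g(y),\tau_y\omega)]\rho_\delta(x-y)\,\dd y$. Here $\nabla g$ is only in $L^\beta_{\rm loc}$, and ({\bf A2}) gives continuity in $x$ only uniformly for $p$ in bounded sets. So this error need not be small, and controlling it is exactly what ({\bf B}) is for. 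Morrey's embedding makes $g$ continuous but does nothing for this error.
\item \emph{The comparison step.} This is not available off the shelf either. You would compare the solution, for which only H\"older bounds like \eqref{eq:regx} are known, with a merely H\"older, unbounded supersolution. The Hamiltonian is super-linear and only locally uniformly continuous in $x$. In doubling of variables the test gradients blow up, and ({\bf A2}) gives no control there. The nonlocal operator also prevents a naive localization of the sublinear corrector.
\item \emph{Bounded controls.} You cannot reduce to uniformly bounded feedbacks. Lemma~\ref{Cset} only gives the moment bound \eqref{sCset}, not a bound on $\|c\|_\infty$ uniform over the control set.
\end{itemize}

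\paragraph{The missing idea.} Never evaluate $H$ at the unbounded argument $p+v_\delta$ with a uniform modulus.
\begin{itemize}
\item Fix $c\in\mathscr{C}^*$ and use the dual inequality \eqref{eq:l54-main}. Its mollification error $r_c(\delta)$ depends only on $\|c\|_\infty$, through ({\bf A2}) for $L$.
\item Apply It\^o's formula to the smooth $V_{\delta,\eps}$ along $X^\eps$, and send $\delta\to0$ \emph{first} for that fixed $c$, as in \eqref{Q0}. This is legitimate because Morrey's inequality plus the uniform-in-$\omega$ local $L^\beta$ bound of Lemma~\ref{Ecv} give \eqref{eq:lip-C} uniformly in $\delta$, and give the double limit \eqref{eq:erg-delta}.
\item The remaining term is handled by the second-moment identity \eqref{eq:conditional-square}, the stopping times $\tau_\eps$, and the big-jump overshoot estimate.
\end{itemize}
The same device works in viscosity language. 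For each fixed bounded $a$, linearize $H=\sup_a[\langle a,\cdot\rangle-L(a,\cdot)]$ before mollifying, and pass to the limit. Then take the supremum at the touching point, and compare only with linear equations along fixed Lipschitz controls. That is essentially the paper's proof.

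\paragraph{Further corrections.}
\begin{itemize}
\item If $\mathcal K$ has order $\alpha-1$, then $\mathrm{div}\circ\mathcal K\circ\nabla$ has order $\alpha+1$, not $\alpha$. Your ray computation actually yields $\scrL=\mathcal K\circ\nabla$, i.e.\ the paper's $\mathscr{J}$.
\item The paper's form $\scrL=\nabla\cdot(\mathcal R\nabla)$ uses $\mathcal R$ of order $\alpha-2$ with a locally integrable kernel. Its real purpose is Lemma~\ref{Ecv}. Your outline omits that lemma but needs it: for $\|v_\delta\|_{L^\infty(\Omega)}<\infty$ under ({\bf B}), and for the $\delta$-uniform bounds under ({\bf C}).
\item Proving $\eps u(t/\eps,0;p)\to t\ol H(p)$ is weaker than what the paper proves, namely \eqref{uppE} uniformly over $c\in\mathscr C^*$. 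That uniform statement is what lets general data be handled through $\ol L$, without a perturbed-test-function localization for a nonlocal operator.
\end{itemize}
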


%This homogenization result is coming from lower bounds in Section \ref{sec:lowerbdd}, and upper bounds in Section \ref{sec:proofUB}.
The result above establishes the qualitative stochastic homogenization of HJB equations with vanishing non-local term. It is known, as seen from the case of local equations studied in \cite{KRV06,LS05,AT15,JST17}, the vanishing ``higher-order'' term introduces extra difficulty for homogenization. In this paper, this additional term is non-local and it is very natural considering the associated stochastic optimal control problem. As far as we know, this is the first work addressing stochastic homogenization in this setting. 

We end this Introduction with several remarks. First, in this paper, we adapt the method of \cite{KRV06} to the non-local setting. The method is based on control representation and on abstract environmental processes, but does not use much viscosity solution theory. On the other hand, such PDE theory for non-local HJB equation involving generators of jump-diffusion processes has been studied intensively recently, see e.g., \cite{MR2422079,MR4024555,MR5058962}. It would be interesting to develop a homogenization method based more on viscosity solution theory. Second, it is known that homogenization is in general more difficult to establish for highly oscillating dynamic random environment, that is, when the coefficients of HJ/HJB equations have heterogeneous random variations in time variable as well; see, e.g., \cite{KV-HJB,JST17,MR3817561}. It is natural to pursue this direction for the non-local equation like \eqref{HJB}. Finally, there is always the more challenging question of quantifying the convergence rate. In the convex setting, we point out the optimal rate for periodic homogenization of HJ equation proved in \cite{MR4946874}, and a recent result \cite{guo2026quantificationergodicityhamiltonjacobiequations} in a dynamic random setting. We will study such problems in future works.

\subsection*{Organization of the paper} In the next section, we present the stochastic optimal control problem that leads to the HJB equations \eqref{HJB}, in which the noise part is a pure jump process. We also rewrite the non-local operator $\mathscr{L}$ into the form of the divergence of another non-local operator acting on the gradient. In section \ref{sec:envpro} we follow the approach of Kosygina, Rezakhanlou and Varadhan \cite{KRV06} to define the environmental process in the probability space lifted from jump processes on $\R^n$, and define the homogenized Lagrangian and Hamiltonian. To prove that the effective Hamiltonian is the right one, i.e., to establish the stochastic homogenization result, we show in section \ref{sec:lowerbdd} that the lower bound part of the convergence follows quite directly from the stochastic optimal control formula and the ergodic theorem, which also explains the definition of the effective Hamiltonian. We also establish some interesting regularity results of the solution to \eqref{HJB}. The upper bound part of the convergence is more involved. In section \ref{sec:convex} we follow again the method of \cite{KRV06} to construct proper ``super-correctors". This is a main novelty of the paper because we need to overcome the difficulty caused by the non-local operator. The ``divergence form" found in section \ref{sec:envpro} plays an important role. Finally, in section \ref{sec:proofUB}, we establish the upper bound part and complete the proof of stochastic homogenization under two different set-up assumptions.

%%%%%%%%%%%
\section{Preliminaries on the non-local operator} 

%%%%
\subsection{The stochastic differential equation with L\'{e}vy noise}
\label{sec:jump diff}

The non-local HJB equation (\ref{HJB}) arises naturally from a stochastic control problem driven by a jump diffusion process. Let $\mathscr{C}$ be the control set defined by 
\begin{equation}
\label{eq:controlset}
\mathscr{C} = \{c \in L^\infty([0,T]\times \R^n;\R^n) \,:\, c \text{ is Lipschitz with respect to $x$}\}.
\end{equation}
That is, $\mathscr{C}$ consists of bounded maps $c(t,x)$ from $[0,T]\times\mathbb{R}^n$ to $\mathbb{R}^n$ that are Lipschitz continuous in $x$. 
Denote by $X_t$ the jump diffusion process on $\mathbb{R}^n$ starting from $x\in\mathbb{R}^n$ at $t=0$ and subjected to a drift vector field $c \in \mathscr{C}$ and a L\'evy noise. More precisely, $X_t = X_t^c$ is the solution of the following stochastic differential equation:
\begin{equation}\label{SDElevy}
  \left\{
   \begin{aligned}
   & \upd X_t = c(t,X_t)\mathrm{d}t +  \mathrm{d}L^{\alpha}_t,\quad & t\in [0,T], \ X_t \in \mathbb{R}^n, \\
   &  X_0 = x, \quad & x\in \mathbb{R}^n,
   \end{aligned}
   \right.
\end{equation}
where $L^{\alpha}_t$, with $\alpha \in (1,2)$, is an $\alpha$-stable L\'{e}vy process with generator $\mathscr{L}$. The characteristic function of $L^\alpha_t$ is given by
\begin{equation*}
    \mathbb{E}\exp(\myi\langle \xi, L^{\alpha}_t \rangle) = e^{t \psi(\xi)}
\end{equation*}
with 
\begin{equation*}
    \psi(\xi) = \int_{\mathbb{R}^n} \left(e^{\myi \langle y, \xi \rangle} -1 - \myi\mathbf{1}_{B_{1}(0)}(y)\langle y, \xi \rangle  \right) \nu(\mathrm{d}y)
\end{equation*}
where $\nu(\mathrm{d}y) = K(\hat{y})\,\mathrm{d}y/|y|^{n+\alpha}$ is the L\'{e}vy measure of $L^{\alpha}_{t}$. Here, $\hat{y}$ is the unit vector in the direction of $y$ and $K: \mathbb{S}^{n-1} \to (0,\infty)$ is a bounded positive function defined on the unit sphere $\mathbb{S}^{n-1}$ and satisfies $K(\theta) = K(-\theta)$ and hence the centering condition
\begin{equation}
    \label{eq:Ksym}
    \int_{\mathbb{S}^{n-1}} \theta K(\theta)\,\upd\sigma(\theta) = 0.
\end{equation}
Here, $\upd\sigma(\theta)$ is the surface measure on the sphere $\mathbb{S}^{n-1}$. In the rest of the paper, however, the simplified notation $\upd\theta$ is used instead for it. The special case of $K\equiv 1$ corresponds to the standard $\alpha$-stable L\'evy process. The centering condition above for a more general $K$ allows one to verify that the resulting process $L^{\alpha}_{t}$ remains self-similar:
\begin{equation}
    \label{eq:Ltass}
    L^{\alpha}_{\epsilon^{-1}t} \eqdistr \epsilon^{-1/\alpha} L^{\alpha}_{t}.
\end{equation}

As usual we always consider a version of $L_t^\alpha$ such that its paths are almost surely in the \emph{C\`adl\`ag} space $\mathbb{D}([0,\infty);\mathbb{R}^{n})$, that is the space of real-valued right continuous functions with left limits. We then define the Poisson random measure associated to the pure jump process $\Delta L^{\alpha}_t =L^{\alpha}_{t}-L^{\alpha}_{t^-}$ as
\begin{equation*}
    N(t,B) := \sharp \{s\in [0,t]:\Delta L^{\alpha}_{s} \in B\}, \quad t\geq 0, B\in \mathscr{B}(\mathbb{R}^n\backslash \{0\}).
\end{equation*}
Note that $N(t,\cdot)$ is a random measure on $\mathbb{R}^n$ determined by the paths of $L_t^\alpha$. We have $\mathbb{E}[N(\mathrm{d}t,\mathrm{d}y)] = \mathrm{d}t\,\nu(\mathrm{d}y)$ and define the corresponding compensated Poisson random measure 
\begin{equation*}
   \widetilde{N}(\mathrm{d}t,\mathrm{d}y)=N(\mathrm{d}t,\mathrm{d}y)-\mathrm{d}t\,\nu(\mathrm{d}y) .
\end{equation*}
Then by L\'{e}vy-It\^{o} decomposition theorem, path-wisely $L^{\alpha}_t$ can be described as
\begin{equation*}
    L^{\alpha}_t = \int_0^t \int_{0<|y|<1}y\widetilde{N}(\mathrm{d}t,\mathrm{d}y)+ \int_0^t\int_{|y|\geq 1}y N(\mathrm{d}t,\mathrm{d}y).
\end{equation*}
Thus the SDE (\ref{SDElevy}) is equivalent to
\begin{equation}\label{Lequu}
 \mathrm{d}X_t = c(t,X_{t})\mathrm{d}t + \int_{|y|<1}y\widetilde{N}(\mathrm{d}t,\,\mathrm{d}y) + \int_{|y|\geq 1}yN(\mathrm{d}t,\mathrm{d}y).
\end{equation}
For every $V \in C^{2}(\mathbb{R}^n)$, we have the following It\^{o} formula (see Theorem 4.4.7 of \cite{A2004}) for the jump diffusion process $X_{t}$
\begin{equation}
\label{eq:ItoC2}
\begin{aligned}
    V(X_{t}) - V(x) =& \int_{0}^{ t} \langle c(s,X_s), \nabla V(X_s)\rangle \mathrm{d}s  + \int_{0}^{t} \int_{\mathbb{R}^n} [V(X_{s^{-}}+ y)-V(X_{s^{-}})]\widetilde{N}(\mathrm{d}s,\mathrm{d}y) \\
    & +  \int_{0}^{t} \int_{\mathbb{R}^n}[V(X_{s^{-}}+y)-V(X_{s^{-}}) - \mathbf{1}_{B_{1}(0)}(y)\langle y, \nabla V(X_{s^{-}}) \rangle ]\nu(\mathrm{d}y)\mathrm{d}s.
\end{aligned}
\end{equation}
The sample paths of $X_{t}$ also belong to the Skorokhod space $\mathbb{D}([0,\infty);\mathbb{R}^{n})$. Define the Skorokhod metric $d(\vartheta_1, \vartheta_2)$ on $\mathbb{D}([0,\infty);\mathbb{R}^{n})$ by
\begin{equation*}
d(\vartheta_1, \vartheta_2)=\inf_{\lambda \in \Lambda} \left\{ \sup_{t \in [0,\infty)}|\lambda(t) - t| + \sup_{t \in [0,\infty)}\left| \vartheta_{1}(\lambda(t))-\vartheta_{2}(t)\right| \right\}.
\end{equation*}
where $\Lambda$ denotes the set of all strictly increasing continuous bijections from $[0,\infty)$ to itself.
Then $\mathbb{D}([0,\infty);\mathbb{R}^{n})$ with the Skorokhod metric is a Polish space; see e.g. \cite[Section VI.1]{JS13}. In the rest of the paper, we denote by $Q^{c}_{x}$ the law of the jump diffusion process $X_{t}$ starting from $x$ determined by the SDE \eqref{Lequu}, which is a probability measure on the Skorokhod space $\mathbb{D}([0,\infty);\mathbb{R}^{n})$.

%%%%%%%%%%%
\subsection{A divergence form characterization of $\mathscr{L}$} 

For the non-local operator $\mathscr{L}$, it is more or less well known that $\mathscr{L}u$ can be written as the divergence of another non-local operator $\mathcal{R}$ acting on the gradient $\nabla u$. To find this out, for a curl-free vector field $v$ on $\R^n$, we define
\begin{equation}
    \label{eq:nldivRn}
    \mathscr{J}v(x) = \int_{\R^n} \left(\int_{x\rightsquigarrow x+y} \langle v(z(s)) - \mathbf{1}_B(y) v(x), \mathrm{d}z(s)\rangle\right)\frac{K(\hat{y})}{|y|^{n+\alpha}}\,\,\mathrm{d}y.
\end{equation}
The above can be thought as a non-local divergence of $v$. The line integral inside is along a smooth simple curve $z(\cdot)$ connecting $x$ to $x+y$. Because $v$ is curl-free, the choice of $z(\cdot)$ can be arbitrary. From this definition one can check that
\begin{equation}
\label{eq:JLRn}
\mathscr{L} u = \mathscr{J}(\nabla u)
\end{equation}
holds for all scalar functions $u\in C^2_{\rm b}(\R^n)$.

Following the method used in \cite[Proposition 3.3]{MR2944369}, we can characterize $\mathscr{L}$ using Fourier transform. Let $u$ be a function of Schwartz class. A direct computation shows the Fourier transform of $\mathscr{L}u$ is given by
\begin{align}
    \mathcal{F}(\mathscr{L} u)(\xi) = & \int_{\mathbb{R}^n} (\cos(\xi \cdot y)-1) \frac{K(\hat{y})}{|y|^{n+\alpha}}\mathcal{F}u(\xi) \,\mathrm{d}y \nonumber \\
    = & \int_{\mathbb{S}^{n-1}}\int_{0}^{\infty}  (\cos(\xi \cdot r\theta )  -1) \frac{K(\theta)}{r^{\alpha+1}}\mathcal{F}u(\xi) \mathrm{d}r \mathrm{d}\theta,
\end{align}
where we used the fact that the mapping $\theta \mapsto K(\theta)$ is even. Setting $\zeta = r|\xi \cdot \theta|$, we obtain
\begin{align}
    \mathcal{F}(\mathscr{L} u)(\xi)  = &  \mathcal{F}u(\xi)\int_{\mathbb{S}^{n-1}} |\xi\cdot\theta|^\alpha \int_{\R_+}   \frac{\cos(\zeta)-1}{|\zeta|^{1+\alpha}} \mathrm{d}\zeta K(\theta)\mathrm{d}\theta \nonumber \\
    = & -C(n,\alpha)\mathcal{F}u(\xi)\int_{\mathbb{S}^{n-1}}|\xi\cdot\theta|^{\alpha} K(\theta) \mathrm{d}\theta.
\end{align}
where $C(n,\alpha)= \int_{\R_+} [1-\cos(\zeta )]/|\zeta|^{1+\alpha} \mathrm{d}\zeta $ is a constant depending on $n$ and $\alpha$.

In view of \eqref{eq:JLRn}, we have $\mathscr{J}v = \nabla \cdot (\mathcal{R}v)$ for curl-free vector field $v$, if the operator $\mathcal{R}$ acting on each component $v_j$ of $v$ is defined by its Fourier symbol as follows:
\begin{equation}\label{Fsym}
    \mathcal{F}(\mathcal{R}v_j)(\xi) = P(\xi) \mathcal{F}{v_j}(\xi), \quad \text{with} \quad P(\xi) := C(n,\alpha)\int_{\mathbb{S}^{n-1}}|\xi|^{-2}|\xi\cdot\theta|^{\alpha} K(\theta) \mathrm{d}\theta.
\end{equation}
%From its symbol we see that $\mathcal{R} = \mathscr{L} \Delta^{-1}$. 
The Fourier symbol $P(\xi)$ of $\mathcal{R}$ is homogeneous of order $\alpha-2$. When $K\equiv 1$, it is well known that the operator $\mathcal{R}$ becomes the Riesz potential and its integral kernel is proportional to the function $|x|^{2-n-\alpha}$. For general $K$ that is a positive and smooth function on $\mathbb{S}^{n-1}$, from \eqref{Fsym} we see that $P(\xi)$ is still homogeneous of degree $\alpha-2$, is locally integrable and is smooth on $\R^n\setminus \{0\}$. As a homogeneous tempered distribution, its (inverse) Fourier transform is a homogeneous distribution of degree $-n-\alpha+2$. On the other hand, by Theorem 2.4.8 of \cite{Grafakos2008}, this remains a function smooth on $\R^n\setminus \{0\}$. Hence, we see that $\mathcal{R}$ admits an integral kernel that is homogeneous of degree $-n-\alpha+2$ and is smooth on $\R^n\setminus \{0\}$. In other words, 
\begin{equation}
\label{eq:cRdef}
    \mathcal{R}g(x) = \int_{\R^n} R(x-y)g(y)\,\,\mathrm{d}y, \qquad R(x) = \frac{\Theta(\hat{x})}{|x|^{n+\alpha-2}},
\end{equation}
where $\hat{x} = x/|x|$ denotes the direction of $x$, and $\Theta$ is  smooth on $\mathbb{S}^{n-1}$ and in particular uniformly bounded. As a result, $\mathcal{R}$ has an integral kernel that is integrable near $0$. In other words, $\mathcal{R}$ is a regular integral operator.

% From the above characterization of $\mathscr{L}$, we find that its adjoint $\mathscr{L}^*$ is defined by
% \begin{equation*}
%     \mathscr{L}^* \phi = \nabla \cdot (\mathcal{R}^* \nabla \phi),
% \end{equation*}
% where $\mathcal{R}^*$ is the adjoint of $\mathcal{R}$ defined by
% \begin{equation*}
%     \mathcal{R}^* f(x) = \int_{\R^n} R^*(x-y)f(y)\,\upd y = \int_{\R^n} R(y-x)f(y)\,\upd y.
% \end{equation*}
% Since we assume $K(y)=K(-y)$, both $\mathcal{R}$ and $\mathscr{L}$ are self-adjoint.
%%%%%%%%%%%
\subsection{The non-local Hamilton-Jacobi-Bellman equation}
The non-local HJB equation \eqref{HJB} naturally arises from a stochastic control problem, as we now describe. 

Let $L(v,\omega)$ be the random Lagrangian function which is defined as the Legendre transformation of the random Hamiltonian $H(p,\omega)$, that is,
\begin{equation}
    L(v,\omega):= \sup_{p\in\mathbb{R}^n}[\langle p, v \rangle - H(p,\omega)], \quad v\in\mathbb{R}^n,\ \omega\in\Omega.
\end{equation}
Under the assumptions on $H$, for each fixed $v\in\mathbb{R}^n$, the random field $L(v,x,\omega) := L(v,\tau_x\omega)$ is also a stationary ergodic random field on $(\Omega, \mathscr{F}, \bP)$.
For each $g(x)\in \BUC(\mathbb{R}^n)$, $c(t,x)\in\mathscr{C}$ and $\omega\in \Omega$, consider the gain functional
\begin{equation}
    \mathbb{E}^{Q^c_x}\left( g(X_t) - \int^{t}_0 L(c(s,X_s),\tau_{X_s}\omega)\mathrm{d}s \right),
\end{equation}
and the optimal control problem of maximizing it with respect to the control $c \in \mathscr{C}$. Here $\mathbb{E}^{Q^{c}_{x}}$ denotes the expectation with respect to the path measure $Q^{c}_{x}$.

Let $u$ be the value function of this optimal control problem, that is,
\begin{equation}
\label{eq:HJBcontrol}
    u(t,x,\omega):= \sup_{c\in\mathscr{C}}\mathbb{E}^{Q^{c}_{x}}\left( g(X_t) - \int^{t}_0 L(c(s,X_s),\tau_{X_s}\omega)\mathrm{d}s \right).
\end{equation}
As in the setting where $\mathscr{L}$ is the Laplacian, it can be shown that $u(\cdot,\cdot,\omega)$ is the unique viscosity solution to the following Cauchy problem of HJB equation:
\begin{equation}\label{HJBu}
  \left\{
   \begin{aligned}
   & \partial_t u(t,x,\omega) =  \mathscr{L} u(t,x) + H(\nabla u(t,x), \tau_{x}\omega ),\quad & (t,x) \in [0,\infty)\times\mathbb{R}^n, \\
   &  u(0,x) = g(x), \quad & x\in \mathbb{R}^n.
   \end{aligned}
   \right.
\end{equation}
We refer to \cite[Theorem 3.4]{GB97} or \cite[Theorem 4.1]{LP09} for the proof of this fact. In this paper we follow the approach of \cite{KRV06,KV-HJB} and do not use the theory of viscosity solutions in any essential way, so we omit the proof of this fact.

The HJB equation above enjoys nice properties with respect to the shifting and scaling of the spatial variable. Let $u(t,x,\omega)$ be the solution to \eqref{HJBu} with initial data $g$. Fix $y\in \R^n$, by inspecting \eqref{HJBu} (or more rigorously, by verification using \eqref{eq:HJBcontrol}), one sees that the function $u^y(t,x,\omega) :=  u(t,x+y,\omega)$ is the solution to \eqref{HJBu} with shifted environment $\omega'=\tau_y\omega$, and $u^y$ satisfies the shifted initial condition $u^y(0,x,\omega) = g^y(x) := g(x+y)$. %If we define $u_{0}^{y}(x) = u_{0}(x+y)$, then the solution of (\ref{HJBu}) with initial data $u(0,x)=u_{0}^{y}(x)$ and $\omega' = \tau_{y}\omega$ is given by
Let $S(t,\omega)$ denote the mapping $g\mapsto S(t,\omega)g = u(t,\cdot,\omega)$ where $u$ is the solution to \eqref{HJBu}. It follows that
\begin{equation}\label{Trans}
    \left(S(t,\omega)g\right)(x+y) = \left(S(t,\tau_y \omega)g^y\right)(x).
\end{equation}

For the scaling in the spatial variable, we check that, given $\eps \in (0,1)$, the solution to \eqref{HJB} is given by
\begin{equation}
\label{eq:uepsscale}
    u_\eps(t,x,\omega) = \eps\left(S(\tfrac{t}{\eps},\omega) (\eps^{-1}u_0(\eps \,\cdot))\right)(\frac{x}{\eps}).
\end{equation}
Here the right hand side is obtained by solving \eqref{HJBu} with initial data $g(x) = \eps^{-1}u_0(\eps x)$ and then rescaling the solution $u(t,x,\omega)$ to $\eps u(t/\eps,x/\eps,\omega)$. Formally, this can be verified by checking
\begin{equation*}
\left(\mathscr{L} u_\epsilon\right)(t,x) = \epsilon^{1-\alpha} (\mathscr{L} u)(\epsilon^{-1}t, \epsilon^{-1}x).
\end{equation*}
More rigorously, this can be verified via a direct scaling in the stochastic optimal control formula \eqref{HJBu}. Indeed, by definition, 
\begin{equation*}
u_\eps(t,x,\omega) = \eps u(\eps^{-1}t, \eps^{-1}x,\omega) = \sup_{c\in \mathscr{C}} \mathbb{E}^{Q^c_{x/\eps}}\left(u_0(\eps X_{t/\eps}) - \eps \int_0^{t/\eps} L(c(s,X_s),\tau_{X_s} \omega)\,\mathrm{d}s\right),
\end{equation*}
where given $c\in \mathscr{C}$, $X_s$ solves the $L^\alpha$-driven SDE \eqref{SDElevy} with drift $c$ and with initial position $x/\eps$. Consider the rescaled process $X^{\epsilon}_{t}:= \epsilon X_{t/\epsilon}$. Then $X^\eps_t$ satisfies
\begin{equation}
\label{eq:Xepsc}
    \mathrm{d}X^{\epsilon}_{t} = c(\epsilon^{-1}t,\epsilon^{-1}X^{\epsilon}_{t})\mathrm{d}t  + \epsilon \mathrm{d}L^{\alpha}_{t/\epsilon}, \quad X^{\epsilon}_{0} =  x.
\end{equation}
For any $V\in C^2(\R^n)$, the function $V^\eps(x):= V(\eps x)$ remains in $C^2(\R^n)$. Applying It\^{o} formula \eqref{eq:ItoC2} to $V^\eps$ for the jump diffusion process $X_\bullet$, we have
\begin{equation*}
\begin{aligned}
    & V(X^{\epsilon}_{t}) - V(x) = V^\eps(X_{t/\eps}) - V^\eps(X_0)\\
    =&\int_{0}^{t/\eps} \langle c(s, X_{s}), \nabla V^\eps(X_{s}, \omega)\rangle \mathrm{d}s  + \int_{0}^{t/\eps} \int_{\mathbb{R}^n} [V^\eps(X_{s^{-}}+ y)-V^\eps(X_{s^{-}})]\widetilde{N}(\mathrm{d}s,\,\mathrm{d}y) \nonumber \\
    & +  \int_{0}^{t/\eps} \int_{\mathbb{R}^n}[V^\eps(X_{s^{-}}+y)-V^\eps(X_{s^{-}}) - \mathbf{1}_{B_{1}(0)}(y)\langle y,\nabla V^\eps(X_{s^{-}}) \rangle ]\nu(\,\mathrm{d}y)\mathrm{d}s.
    \end{aligned}
    \end{equation*}
Let $N_\epsilon$ denote the Poisson random measure of the rescaled L\'evy process \(L_t^{\alpha,\epsilon}:=\epsilon L^\alpha_{t/\epsilon}\), so that \( N_\epsilon(\dd s,\dd y) =N(\epsilon^{-1}\dd s,\epsilon^{-1} \dd y)\).
Its compensator is $\epsilon^{\alpha-1}\mathrm{d}t\,\nu(\mathrm{d}y)$; denote $\widetilde N_\epsilon$ for the associated compensated measure.

For the integrals above, we use the change of variables $\eps s \to s$, $\eps y \to y$. In view of the fact $(\nabla V^\eps)(x) = \eps (\nabla V)(\eps x)$, the symmetry condition \eqref{eq:Ksym} and the definition of $X^\eps_\bullet$, we can transfer the result above to the function $f$ and the process $X^\eps_\bullet$, and get
    \begin{equation}
    \label{eq:rescaledIto}
    \begin{aligned}
  V(X^\eps_t) -V(X_0^\eps)  =& \int_{0}^{ t} \langle 
    c(\epsilon^{-1}s,\epsilon^{-1}X^{\epsilon}_{s}), \nabla V(X^{\epsilon}_{s})\rangle \mathrm{d}s \\
    & + \int_{0}^{t} \int_{\mathbb{R}^n} [V(X^{\epsilon}_{s^{-}}+ y)-V(X^{\epsilon}_{s^{-}})]\widetilde{N}_{\epsilon}(\mathrm{d}t,\mathrm{d}y)\\
    & + \epsilon^{\alpha-1 }\int_{0}^{t} \int_{\mathbb{R}^n}[V(X^{\epsilon}_{s^{-}}+y)-V(X^{\epsilon}_{s^{-}}) -\mathbf{1}_{B_{1}(0)}(y)\langle y, \nabla V(X^{\epsilon}_{s^{-}}) \rangle ]\nu(\,\mathrm{d}y)\mathrm{d}s.
\end{aligned}
\end{equation}
It follows that $X^{\epsilon}_{t}$ is a jump diffusion process with generator $\epsilon^{\alpha-1} \mathscr{L} + c(\epsilon^{-1}t, \epsilon^{-1}x)\cdot\nabla$, starting from $x$ at time $0$. Let $Q^{c,\epsilon}_{x}$ denote the law of this process, we then obtain the variational formula for $u_{\epsilon}(t,x,\omega)$:
\begin{equation}
\label{vf}
\begin{aligned}
    u_{\epsilon}(t,x,\omega) = & \sup_{c\in\mathscr{C}} \mathbb{E}^{Q^{c}_{x/\epsilon}}\left( u_{0}(\epsilon X_{t/\epsilon}) - \epsilon\int^{t/\epsilon}_0 L(c(s,X_s),\tau_{X_{s}}\omega)\mathrm{d}s \right)\\
    = & \sup_{c\in\mathscr{C}} \mathbb{E}^{Q^{c,\epsilon}_{x}}\left( u_{0}(X^{\epsilon}_{t}) - \xi_{\epsilon}(t) \right).
\end{aligned}
\end{equation}
where 
\begin{equation}
\label{eq:xiepsdef}
    \xi_{\epsilon}(t) %= \epsilon\int^{t/\epsilon}_0 L(c(s,X_s),\tau_{X_{s}}\omega)\mathrm{d}s 
    = \int^{t}_0 L(c(\epsilon^{-1}s, \epsilon^{-1}X^{\epsilon}_s),\tau_{\epsilon^{-1}X^{\epsilon}_{s}}\omega)\mathrm{d}s.
\end{equation}
Since the second line in \eqref{vf} is precisely the optimal control formula for the solution of \eqref{HJB}, we verify again $u_\eps(t,x,\omega)=\eps u(\eps^{-1}t,\eps^{-1}x,\omega)$. That is, \eqref{eq:uepsscale} holds.

%%%%%%%%%%%
\begin{rem}\label{rem:BLip} As an immediate consequence of the representation formula \eqref{vf}, for fixed $\omega \in \Omega$, the solution mapping $S_\eps(t,\omega): u_0 \mapsto u(t,\cdot)$ is a semigroup and satisfies
\begin{equation*}
    \sup_{x\in \R^n} \left| S_\eps(t,\omega)u_0(x) - S_\eps(t,\omega)v_0(x) \right| \le \|u_0-v_0\|_\infty.
\end{equation*}
This can also be obtained by the comparison principle for non-local HJB equations (e.g., Proposition 3.1 of \cite{MR3385169}). This observation allows us to reduce the class of initial data satisfying ({\bf A3}) to a strictly smaller class satisfying:
\begin{itemize}
\item[({\bf A3$'$})] The initial value $u_0$ is bounded and Lipschitz (this is stronger than being BUC):
\begin{equation}\label{BLip}
    \|u_0\|_\infty < \infty \, \text{and $\exists K < \infty$ such that}\, |u_0(x)-u_0(y)|\le K|x-y|, \; \forall x,y.
\end{equation}
\end{itemize}
Indeed, if $u_0 \in \mathrm{BUC}$, then there is a sequence of functions $\{u_0^{k}\}_{k\in \mathbb{N}}$ such that $u_0^k$ satisfies ({\bf A3$'$}) for all $k$ (with constant $K$ depending on $k$) and $\|u_0^k-u_0\|_\infty$ converges to zero. Let $u^k_\eps(t,x)$ be the solution to \eqref{HJB} with initial value $u_0^k$, then by the above comparison result we have
\begin{equation*}
    |u^k_\eps(t,x;\omega)-u_\eps(t,x;\omega)| \le \|u_0 - u_0^k\|_\infty.
\end{equation*}
The above inequality is uniform in $t,x$ and $\omega$. Similar estimates hold for the solutions to \eqref{hHJB} with initial data $u_0$ and $u_0^k$. As a result, to prove Theorem \ref{thm:main}, we can first establish the convergence of $u^k_\eps$ to $\overline{u}^k$ and then let $k\to \infty$. In other words, in Theorem \ref{thm:main} we may assume that $u_0$ satisfies ({\bf A3$'$}) instead.
\end{rem}

We end this section by giving an example of $(\Omega,\scrF,\mathbf{P})$ that satisfies our assumptions in the paper. We take an example from \cite{ATY15} which is motivated by the classical mechanics Hamiltonian $H(p,x)=\frac12|p|^2 + V(x)$ where $V$ is the potential energy field. We can take 
\begin{equation*}
\Omega = \{V\in \mathrm{UC}(\R^n)\,:\, |V(x)|\le M \text{ for all $x$}\},
\end{equation*}
that is, the space of uniformly continuous functions on $\R^n$ uniformly bounded by a constant $M>0$. This is the sample space of the potential field $V$ and is equipped with the topology of locally uniform convergence. Then $\Omega$ is a Polish space. For the translation group $\{\tau_x \,:\, x\in \R^n\}$, we choose $\tau_x V(\cdot) = V(\cdot+x)$ to be the spatial translation of functions. Then it is more or less standard that one can choose a probability measure $\mathbf{P}$ on $\Omega$, for example induced from some Gaussian random field on $\R^n$ or from Poisson point process (see e.g. \cite{S98, GHV2018}), so that $\{\tau_x\}_{x\in \R^n}$ is measure-preserving and is ergodic. Now $\omega\in \Omega$ denotes a realization of $V(\cdot)$, and $H(p,\omega) =\frac12|p|^2+\omega(0)$.

%%%%%%%%%%%
\section{The definition of effective Hamiltonian}
\label{sec:envpro}

%%%
\subsection{Environmental process induced by jump-diffusion processes}

Now we claim the dense property of $L^{p}(\Omega,\mathscr{F},\mathbf{P})$ with $1 \leq p \leq \infty$.

\begin{lem}\label{Mollify}
For every $1\leq  p < \infty$, the set 
\begin{equation*}
    \{f\in L^{p}(\Omega) \;:\; x \mapsto f(\tau_{x}\omega) \in C^{\infty}(\mathbb{R}^n) \text{ for almost surely } \omega \in \Omega\}
\end{equation*}
is dense in $L^{p}(\Omega)$. For $p=\infty$, the set above is dense in the weak-$*$ topology in $L^\infty(\Omega)$.
\end{lem}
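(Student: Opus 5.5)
The plan is to mollify along the orbits of the translation group. Fix a nonnegative $\rho\in C_c^\infty(\R^n)$ with $\int\rho=1$ and set $\rho_\delta(y)=\delta^{-n}\rho(y/\delta)$. For $f\in L^p(\Omega)$ define
\begin{equation*}
f_\delta(\omega) := \int_{\R^n} \rho_\delta(y) f(\tau_y\omega)\,\upd y .
\end{equation*}
By the joint measurability of $(y,\omega)\mapsto \tau_y\omega$, the integrand is jointly measurable, so Fubini applies and $f_\delta$ is $\mathscr{F}$-measurable. Minkowski's integral inequality together with the fact that each $\tau_y$ preserves $\bP$ gives $\|f_\delta\|_{L^p}\le \|f\|_{L^p}$ for $1\le p\le\infty$.

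Next I check smoothness along orbits. Fix $\omega$ with $y\mapsto f(\tau_y\omega)\in L^1_{\rm loc}(\R^n)$. This holds for a.e. $\omega$: by Fubini and measure preservation, $\int_\Omega\int_{B_R}|f(\tau_y\omega)|\,\upd y\,\upd\bP=|B_R|\,\|f\|_{L^1}$, and then one takes $R\in\mathbb N$ (for $p>1$, $\|f\|_{L^1}\le\|f\|_{L^p}$ on a probability space). By the group property,
\begin{equation*}
f_\delta(\tau_x\omega)=\int\rho_\delta(y)f(\tau_{x+y}\omega)\,\upd y=\int \rho_\delta(z-x)\,f(\tau_z\omega)\,\upd z = (\check\rho_\delta * F_\omega)(x),
\end{equation*}
where $F_\omega(z)=f(\tau_z\omega)$. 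This is a convolution of a locally integrable function with a $C_c^\infty$ kernel, hence $C^\infty$ in $x$. So $f_\delta$ belongs to the set in the statement.

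The remaining and main step is convergence, and for this I use the continuity of the unitary (isometric) representation $U_y f := f\circ\tau_y$ on $L^p(\Omega)$, $1\le p<\infty$. I would prove that $y\mapsto U_yf$ is strongly continuous, the standard consequence of joint measurability. Take $f$ bounded, which suffices by density and the uniform bound $\|U_y\|\le1$, and $g\in L^{p'}$. Then $y\mapsto\langle U_yf,g\rangle$ is measurable, so $y\mapsto U_y$ is weakly measurable. Because $L^p$ of a Polish space with its completed Borel $\sigma$-algebra is separable, the classical result (a weakly measurable group representation on a separable space is strongly continuous) applies. Alternatively, I can argue directly: $\int_{B_1}\|U_yf-f\|_p\,\upd y$ is finite, and from it one derives continuity at $0$ via $\|U_{y+h}f-U_yf\|=\|U_hf-f\|$ and an averaging argument. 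Granting this, Minkowski gives
\begin{equation*}
\|f_\delta-f\|_{L^p}\le\int\rho_\delta(y)\|U_yf-f\|_{L^p}\,\upd y\to0.
\end{equation*}

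For $p=\infty$, $f\in L^\infty\subset L^1$, and for every $g\in L^1$ I get $\langle f_\delta,g\rangle=\int\rho_\delta(y)\langle f,U_{-y}g\rangle\,\upd y$. By the $L^1$ strong continuity just proved this tends to $\langle f,g\rangle$, which gives weak-$*$ density. The delicate point is exactly the strong continuity of $y\mapsto U_y$ from mere joint measurability. If the abstract route is unsatisfying, I would fall back to a direct argument: approximate $f$ by bounded $f$, use Fubini to show $\int_\Omega|f(\tau_{y+h}\omega)-f(\tau_y\omega)|^p\,\upd\bP$ is independent of $y$, and apply Lebesgue's differentiation/continuity of translation in $L^p(\R^n\times\Omega)$ restricted to $B_1\times\Omega$.
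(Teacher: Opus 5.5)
Your proposal is correct and follows essentially the same route as the paper: mollify along orbits via $f_\delta=\int\rho_\delta(y)f(\tau_y\cdot)\,\upd y$, read off smoothness of $x\mapsto f_\delta(\tau_x\omega)$ from the convolution identity, obtain $L^p$ convergence from strong continuity of $y\mapsto f\circ\tau_y$, and handle $p=\infty$ by moving the mollifier onto $g\in L^1$. The only difference is that the paper cites the strong continuity of the translation group (Jikov--Kozlov--Oleinik), whereas you justify it yourself (your Fubini/dominated-convergence fallback, using continuity of translations in $L^p_{\rm loc}(\R^n)$ along almost every orbit, is a clean self-contained proof), and you also verify the almost-sure local integrability of orbits, which the paper leaves implicit.
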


\begin{proof}
Consider a smooth even function $\rho(y)$ so that $\text{supp}\,\rho \subset B_{1}(0)$, $\rho(y) \geq 0$, and $ \int_{\mathbb{R}^n} \rho (x)\,\mathrm{d}x =1$. For every $f \in L^{1}(\Omega,\mathscr{F},\mathbf{P})$, we mollify $f$ by the convolution 
\begin{equation}
\label{eq:rvmolify}
    f_{\delta}(\omega)= \int_{\mathbb{R}^n} f(\tau_{y}\omega)\rho_{\delta}(y)\,\mathrm{d}y, 
\end{equation}
where $\rho_{\delta}(y) = \delta^{-n}\rho(y/\delta)$ for every $\delta \in (0,1)$. Then for every $x \in \mathbb{R}^n$,
\begin{equation*}
    f_{\delta}(\tau_{x}\omega)= \int_{\mathbb{R}^n} f(\tau_{x+y}\omega)\rho_{\delta}(y)\,\mathrm{d}y = \int_{\mathbb{R}^n} f(\tau_{y}\omega)\rho_{\delta}(y-x)\,\mathrm{d}y.
\end{equation*}
Thus for almost surely $\omega \in \Omega$, $x \mapsto f_{\delta}(\tau_{x}\omega) \in C^{\infty}(\mathbb{R}^n)$. Moreover,
\begin{equation*}
    f_{\delta}(\omega) - f(\omega) = \int_{B_{1}(0)} \rho(y)(f(\tau_{\delta y}\omega) - f(\omega))\,\mathrm{d}y.
\end{equation*}
Then by Fubini theorem, for all $ 1 \leq p < \infty$,
\begin{align*}
    \|f_{\delta} - f\|^{p}_{L^{p}(\Omega,\mathscr{F},\mathbf{P})} = & \int_{\Omega}|f_{\delta}(\omega) - f(\omega)|^{p}\mathrm{d}\mathbf{P} \\
    \leq & \int_{B_{1}(0)} \rho(y) \int_{\Omega} |f(\tau_{\delta y}\omega) - f(\omega)|^{p}\mathrm{d}\mathbf{P} \,\mathrm{d}y.%\\
    %\leq & \sup_{y \in B_{1}(0)}\int_{\Omega} |f(\tau_{\delta y}\omega) - f(\omega)|^{p}\mathrm{d}\mathbf{P}.
\end{align*}
Recall that the transition group $\tau_{y}$ acts on $L^p(\Omega)$ by $(\tau_y f)(\omega) = f(\tau_y \omega)$ and is strongly continuous; see e.g. \cite[section 7.1]{JKO12}. It follows that the above vanishes as $\delta\to 0$. This completes the proof for $p\in [1,\infty)$.

For $p=\infty$, take any $g\in L^1(\Omega)$, we note that
\begin{equation*}
\begin{aligned}
\int_\Omega f_\delta(\omega) g(\omega)\,\dd \mathbf{P} &= \int_\Omega \left(\int_{\R^n} f(\tau_{\delta y}\omega) \rho(y)\,\dd y\right) g(\omega)\,\dd \mathbf{P} \\
&=  \int_{\R^n}\left(\int_\Omega f(\tau_{\delta y}\omega)  g(\omega)\,\dd \mathbf{P}\right) \rho(y)\,\dd y = \int_{\R^n}\left(\int_\Omega f(\omega)  g(\tau_{-\delta y}\omega)\,\dd \mathbf{P}\right) \rho(y)\,\dd y\\
&=\int_\Omega f(\omega)\left(\int_{\R^n} g(\tau_{\delta y} \rho(-y) \,\dd y\right) \,\dd \mathbf{P} = \int_\Omega f(\omega) g_\delta(\omega)\,\dd \mathbf{P}
\end{aligned}
\end{equation*}
which converges, in view of $g_\delta \to g$ in $L^1$, to $\int_\Omega fg$. Hence, $f_\delta$ converges to $f$ in the weak-$*$ topology. This completes the proof.
\end{proof}

In view of this density property, we define
\begin{equation}
\label{eq:C2U}
    \mathscr{U} = \{ \phi\in L^{\infty}(\Omega) \,:\, x\mapsto \phi(\tau_{x}\omega) \in C^{2}_{\rm b}(\mathbb{R}^n) \text{ a.s.\, for $\omega \in \Omega$}\} .
\end{equation}
By definition, for $\phi \in \mathscr{U}$, the corresponding random field $\widetilde \phi(x,\omega) := \phi(\tau_x \omega)$ is stationary and is bounded and is $C^2$ in the spatial variable, $\mathbf{P}$-almost surely. Then $\mathscr{U}$ is dense in $L^p(\Omega)$ for all $p\in [1,\infty)$. Similarly, let $\mathscr{U}^n$ consist of random vectors whose spatial translations form $C^2$ vector fields on $\R^n$.

We follow the methodology of Kosygina, Rezakhanlou and Varadhan \cite{KRV06} to define the effective Lagrangian $\overline{L}(q)$, the Legendre transform of which gives the effective Hamiltonian $\overline{H}(p)$. To do so we need to lift jump-diffusion processes in $\R^n$ to the probability space $\Omega$; this defines the so-called random walks in random environment, or the environmental process. The effective Lagrangian $\overline{L}$ is then defined via the ergodicity property of the environmental process. 

Define
\begin{equation}
\label{eq:vfB}
    \mathbf{B} =  L^{\infty}(\Omega,\mathbb{R}^n).
\end{equation}
Note that $\mathscr{U}^n$ is dense in $\mathbf{B}$ in the weak-$*$ topology. Given $b\in\mathbf{B}$. Almost surely for $\omega\in\Omega$, the realization $\widetilde b(x,\omega):= b(\tau_{x}\omega)$ is a bounded and Borel vector field. 
The martingale problem associated with \eqref{eq:controlset} is well posed and has a Feller transition density with two-sided bounds; see, e.g. \cite{ChenZhang2018}.
Therefore there exists a jump-diffusion process determined by the SDE \eqref{SDElevy} with drift $c=\widetilde{b}(x,\omega)=b(\tau_x\omega)$. Its infinitesimal generator is
\begin{equation*}
\mathcal{A}_{b}: = \mathscr{L} + \langle \widetilde b(x,\omega), \nabla \rangle.
\end{equation*}
Following \cite{KRV06} we lift this jump-diffusion process to a process $\theta^b_t$ with state space $\Omega$ by setting $\theta^b_t(\omega) = \tau_{X_t}\omega$. We use $\mathbb{Q}^{b,\omega}_0$ to denote the law of the jump-diffusion process $X_t^{b,\omega}$ starting from $0$ and with state space being \emph{C\`adl\`ag} paths in $\R^n$, and use $P^{b,\omega}$ for the law of the resulting process $\theta^b_t(\omega)$. The process is usually called the \emph{environmental process}.

Given $b \in \mathbf{B}$, $\theta^b_t$ is a Markov process with values in $\Omega$. To find its infinitesimal generator, recall that $\{\tau_{y}:\Omega\rightarrow \Omega\}_{y\in\mathbb{R}^n}$ is an ergodic group of translations on the probability space $(\Omega,\mathscr{F},\bP)$. For each $i=1,\dots,n$, let $e_i$ denote the unit vector in the $x_i$-direction, and let $\mathfrak{D}_i$ be the infinitesimal generator of the transformation group $\{\tau_{te_i}\}_{t\in\R}$. More precisely, given a real valued random variable $\phi$ on $\Omega$, let $\widetilde{\phi}(x;\omega)$ denote the random field $x\mapsto (\tau_x \phi)(\omega)$. $\mathfrak{D}_i$ is defined by
$$
(\mathfrak{D}_i \phi)(\omega) = \lim_{h\to 0} \frac{\phi(\tau_{he_i}\omega)-\phi(\omega)}{h} = \left.\frac{\partial}{\partial x_i} \widetilde{\phi}(x;\omega)
\right\rvert_{x=0}, \quad  \phi \in \Dom(\mathfrak{D}_i),
$$
where the domain $\Dom(\mathfrak{D}_i)$ consists of random variables $\phi$ so that the right-hand side above exists. Define also $\mathfrak{D} = (\mathfrak{D}_1,\dots,\mathfrak{D}_n)$. Note also $\tau_y \circ \mathfrak{D}_i = \mathfrak{D}_i\circ\tau_{y}$. Clearly, for random vector $b\in L^2(\Omega;\R^n)$ and random variable $\phi$ belonging to $\cap_i \Dom(\mathfrak{D}_i)$, we have
\begin{equation}
\label{eq:absD}
\langle b(\tau_x\omega),(\mathfrak{D}\phi)(\tau_x \omega)\rangle = \langle \widetilde{b}(x;\omega),\nabla_x\widetilde{\phi}(x;\omega)\rangle.
\end{equation}

We lift the non-local integro-differential operator $\mathscr{L}$ in \eqref{eq:diL} to an operator acting on random variables in $(\Omega,\mathscr{F},\bP)$ by setting
\begin{equation}
\label{eq:absL}
    \mathfrak{L} \phi(\omega)  = \int_{\mathbb{R}^{n}} [\phi(\tau_{y}\omega)-\phi(\omega)-\mathbf{1}_{B_1}(y)\langle \mathfrak{D} \phi(\omega), y\rangle ]\frac{K(\hat{y})}{|y|^{n+\alpha}}\,\mathrm{d}y.
\end{equation}
Equivalently, this definition is determined by
\begin{equation*}
\left(\mathfrak{L} \phi\right)(\tau_x \omega) = \mathscr{L}\widetilde{\phi}(x;\omega).
\end{equation*}

With this definition, let us check that the infinitesimal generator of $\theta^b_t$ is given by
\begin{equation}
    \label{eq:cAOmega}
\mathfrak{A}_{b} := b(\omega)\cdot\mathfrak{D} + \mathfrak{L}.
\end{equation}
Indeed, given $\phi\in \mathscr{U}$ and in view of the definition $\theta^b_t \omega = \tau_{X_t}\omega$, we see that the measure $P^{b,\omega}$ induced from $\mathbb{Q}^{b,\omega}_0$ satisfies
\begin{equation*}
    \mathbf{E}^{P^{b,\omega}} [\phi(\theta_t\omega)-\phi(\omega)] = \mathbb{E}^{\mathbb{Q}^{b,\omega}_0} [\widetilde\phi(X_t,\omega)-\widetilde\phi(0,\omega)].
\end{equation*}
Hence, we get
\begin{equation*}
\begin{aligned}
     \lim_{t\to 0}\frac{1}{t}\mathbf{E}^{P^{b,\omega}}[\phi(\theta_t\omega) - \phi(\omega)]= & \lim_{t\to 0}\frac{1}{t}\mathbb{E}^{Q^{b,\omega}_{0}}[\widetilde\phi(X_{t},\omega) - \widetilde\phi(0,\omega)] \nonumber\\
    = & \widetilde b(0,\omega)\cdot\nabla_x \widetilde\phi(0,\omega) + \mathscr{L} \widetilde\phi(x,\omega) \\
   = & b(\omega)\cdot\mathfrak{D} \phi(\omega) + \mathfrak{L} \phi(\omega),
\end{aligned}
\end{equation*}
where the relations \eqref{eq:absD} and \eqref{eq:absL} are used.

Recall that, for sufficiently smooth function $u$, we have written $\mathscr{L}u = \mathscr{J}\nabla u$, where $\mathscr{J}$ is defined for curl-free vector fields on $\R^n$ by \eqref{eq:nldivRn}. For $\R^n$-valued random vector $v$ on $\Omega$, let $\mathfrak{D}\times v$ be the tensor field $(\mathfrak{D}_j v^i - \mathfrak{D}_i v^j)$. Then assuming that $\widetilde v(\cdot,\omega) = v(\tau_{\bullet}\omega)$ is smooth in $x$, we see that $\mathfrak{D}\times v = 0$ implies that the vector field $\widetilde v(\cdot,\omega)$ is curl-free. For such random vectors, we define
\begin{equation}
\label{eq:JJ}
\mathfrak{J}v(\tau_x \omega) := \mathscr{J}\widetilde v(x,\omega).
\end{equation}
The above definition is equivalent to
\begin{equation*}
%\label{eq:nldiv}
\mathfrak{J}v(\omega) = \int_{\R^n} \left\{\left(\int_{0\to y} \langle v(\tau_{z(s)}\omega), \mathrm{d}z(s)\rangle\right) -\mathbf{1}_B (y)\langle v(\omega),y \rangle\right\}\frac{K(\hat{y})}{|y|^{n+\alpha}}\,\,\mathrm{d}y.
\end{equation*}
By those definitions, for $\phi \in \mathscr{U}$, it holds that
\begin{equation}
\label{eq:LJprob}
    \mathfrak{L} \phi = \mathfrak{J}(\mathfrak{D}\phi)
\end{equation}
For functions (and vector fields) on $\R^n$, we also defined the operator $\mathcal{R}$ in \eqref{eq:cRdef} so that 
\begin{equation*}
\mathscr{L}(u) = \nabla \cdot (\mathcal{R}\nabla u)
\end{equation*}
for smooth functions, or $\mathscr{J}(v) = \nabla \cdot (\mathcal{R}v)$ for smooth vector fields. Using the translation group $\{\tau_x\}$, we define
\begin{equation}
    \label{eq:fRdef}
\mathfrak{R} v(\omega) := \mathcal{R}\widetilde{v}(0,\omega) = \int_{\R^n} R(-y)v(\tau_y\omega)\,\dd y.
\end{equation}
We check that $\mathfrak{R}v(\tau_x \omega) = \mathcal{R}\widetilde{v}(x,\omega)$. As a consequence, for curl-free random vector $v$, we have
\begin{equation*}
    \mathfrak{J}v = \mathfrak{D}\cdot \mathfrak{R}v.
\end{equation*}
Note also that since $\mathfrak{D}$ and $\mathfrak{R}$ all commute with the translation operators $\{\tau_x\}$, for curl-free random vector $v$, we have $(\mathfrak{J}v)(\tau_x\omega) = \mathfrak{J}(\tau_x v)(\omega)$.

\medskip 

A probability density $\rho \in L^{1}(\Omega, \mathscr{F}, \bP)$ with $\rho \geq 0$ and $\int_{\Omega}\rho \mathrm{d}\bP=1$ is an invariant density with respect to the environmental process $\theta^b_t$ if for every $\phi \in \mathscr{U}$,
\begin{equation}
\label{eq:FPweak}
    \int_{\Omega} \left( b(\omega)\cdot\mathfrak{D} \phi(\omega) + \mathfrak{L}\phi(\omega)\right)\rho(\omega)\,\mathrm{d}\bP = 0.
\end{equation}
We interpret the above as the definition of $\rho$ satisfying the stationary Fokker-Planck equation
\begin{equation*}
\mathfrak{L}^*\rho=\mathfrak{D}\cdot  (b(\omega) \rho).
\end{equation*}
where $\mathfrak{L}^*$ is the dual of $\mathfrak{L}$, which, similar to \eqref{eq:absL}, is defined by
\begin{equation*}
\mathfrak{L}^* \phi(\omega)  = \int_{\mathbb{R}^{n}} [\phi(\tau_{y}\omega)-\phi(\omega)-\mathbf{1}_{B_1}(y)\langle \mathfrak{D} \phi(\omega), y\rangle ]\frac{K(-\hat{y})}{|y|^{n+\alpha}}\,\mathrm{d}y.
\end{equation*}
Under the evenness assumption on $K$, $\mathfrak{L}^*=\mathfrak{L}$. %The domains of definition of $\mathfrak{L}$ and its dual can be set as $\mathscr{U}$ defined in \eqref{eq:C2U}. 

We also denote $\mathbf{D}$ the space of probability densities $\rho:\Omega\rightarrow \mathbb{R}$ relative to $\bP$, with $\rho$ positive almost everywhere and $\rho \in \mathscr{U}$. Define 
\begin{align*}
    \mathscr{E}:=\{(b,\rho)\in\mathbf{B}\times\mathbf{D} \,:\,  \mathfrak{L}^*\rho=\mathfrak{D}\cdot  (b(\omega) \rho) \}.   
\end{align*}
Note that, given $(b,\rho) \in \mathscr{E}$, $\upd\mathbf{Q} = \rho\upd \mathbf{P}$ is the ergodic invariant measure of the environmental process $\theta_t^b$. Moreover, it is clear that the set $\mathscr{E}$ is nonempty, since for any constant vector $b$, $(b,1) \in \mathscr{E}$. Similarly, given a regular $\rho$ that is bounded from below by a positive constant, then the random vector $b:= (\mathfrak{R}^* \mathfrak{D}\rho)/\rho$ also makes $(b,\rho) \in \mathscr{E}$. 

We point out that, the importance of $\rho \mathrm{d}\bP$ being the ergodic invariant measure for the environmental process $\theta^b_t$ is, for $\bP$-a.s $\omega$ and for all integrable $F$, 
\begin{equation*}
    \lim_{t\rightarrow\infty}\frac{1}{t}\int^{t}_{0}F(\theta_s \omega)\mathrm{d}s = \int_{\Omega} F(\omega)\rho(\omega)\mathrm{d}\bP.
\end{equation*}
This motivates the definition of the effective Lagrangian given in the next subsection.

\medskip

%%%%%%%%%
\subsection{Definition of the effective Lagrangian and the effective Hamiltonian}

Following the ideas of \cite{KV-HJB}, we construct the effective Lagrangian $\overline{L}$ as
\begin{equation}
    \overline{L}(v):= \inf \{\mathbf{E}[L(b(\omega),\omega)\rho(\omega)] \;:\; {(b,\rho)\in \mathscr{E}, \mathbf{E}(b\rho)=v}\}.
\end{equation}

The effective Hamiltonian is then defined to be the Legendre transform of $\overline{L}$. This is possible because $\overline{L}$ inherits convexity from $L$ as shown in the next result.

\begin{prop}
\label{prop:propolL}
Under assumption {\upshape({\bf A})}, the effective Lagrangian $\overline L: \R^n\to \R$ defined above is a convex function and still satisfies the bounds in  {\upshape({\bf A1})}.
\end{prop}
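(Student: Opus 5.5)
We prove the three properties separately: finiteness and the upper bound, the lower bound, and convexity.

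\emph{Upper bound and finiteness.} For any $v\in\R^n$ the pair $(b,\rho)=(v,1)$ belongs to $\mathscr{E}$. Indeed, constant $b$ and $\rho\equiv 1$ satisfy \eqref{eq:FPweak}, because $\mathbf{E}[\mathfrak{D}\phi]=0$ and $\mathbf{E}[\mathfrak{L}\phi]=0$ by stationarity of $\bP$ under $\tau_y$. Since $\mathbf{E}(b\rho)=v$, we get
\[
\overline{L}(v)\le \mathbf{E}[L(v,\omega)]\le c_3'|v|^{\beta'}+c_2
\]
by \eqref{A2L}. In particular $\overline L<\infty$ everywhere.

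\emph{Lower bound.} For any admissible pair, \eqref{A2L} and Jensen's inequality under the probability measure $\rho\,\dd\bP$ give
\[
\mathbf{E}[L(b,\omega)\rho]\ge c_1'\mathbf{E}[|b|^{\beta'}\rho]-c_4\ge c_1'|\mathbf{E}(b\rho)|^{\beta'}-c_4=c_1'|v|^{\beta'}-c_4 .
\]
Taking the infimum yields $\overline L(v)\ge c_1'|v|^{\beta'}-c_4$, so $\overline L$ is real-valued and satisfies the bounds in ({\bf A1}).

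\emph{Convexity.} We use the standard flux reparametrization. Given $(b_1,\rho_1),(b_2,\rho_2)\in\mathscr{E}$ with $\mathbf{E}(b_i\rho_i)=v_i$ and $\lambda\in[0,1]$, set
\[
\rho=\lambda\rho_1+(1-\lambda)\rho_2,\qquad b=\frac{\lambda b_1\rho_1+(1-\lambda)b_2\rho_2}{\rho}.
\]
Then $\rho$ is a positive probability density in $\mathscr{U}$, and $b\rho$ is the corresponding convex combination of the fluxes. The constraint \eqref{eq:FPweak} is linear in the pair $(\rho,b\rho)$:
\[
\mathbf{E}[(b\rho)\cdot\mathfrak{D}\phi+\rho\,\mathfrak{L}\phi]=0,
\]
so $(b,\rho)$ satisfies it, and $\mathbf{E}(b\rho)=\lambda v_1+(1-\lambda)v_2$. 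Convexity of $L$ in $q$ applied pointwise gives
\[
\rho\, L(b,\omega)\le \lambda\rho_1L(b_1,\omega)+(1-\lambda)\rho_2L(b_2,\omega),
\]
since $b$ is the $\rho_i$-weighted average of $b_1$ and $b_2$. Taking expectations and then infima over the two pairs gives $\overline L(\lambda v_1+(1-\lambda)v_2)\le\lambda\overline L(v_1)+(1-\lambda)\overline L(v_2)$.

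\emph{Main obstacle.} The difficulty is checking that the combined $b$ lies in $\mathbf{B}=L^\infty$. Since $|b|\le\max(|b_1|,|b_2|)$ pointwise, boundedness holds, and $\rho\in\mathscr{U}$ is positive almost everywhere, so $b$ is well-defined. Hence $(b,\rho)\in\mathscr{E}$ and no regularization is needed.
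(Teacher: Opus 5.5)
Your proposal is correct and follows the paper's proof essentially step by step: the test pair $(v,1)$ gives the upper bound, Jensen's inequality under $\rho\,\dd\bP$ gives the lower bound, and the flux-weighted combination $\rho=\lambda\rho_1+(1-\lambda)\rho_2$, $b\rho=\lambda b_1\rho_1+(1-\lambda)b_2\rho_2$ gives convexity. Your explicit check that $|b|\le\max(|b_1|,|b_2|)$, so that $b\in\mathbf{B}$, fills in a point the paper leaves implicit, and your constant $c_3'$ in the upper bound corrects a small slip in the paper's write-up.
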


\begin{proof} To check the convexity of $\overline{L}(v)$ in $v$, fix any $v_1,v_2\in \R^n$ and $\lambda\in (0,1)$. For any $r>0$ and each $i\in \{1,2\}$, by the definitions of $\ol{L}(v_i)$, we can find $(b_i,\rho_i) \in \mathscr{E}$ with $\mathbf{E}(b_i\rho_i) = v_i$, such that
\begin{equation*}
    \overline{L}(v_i) + r\ge \mathbf{E}[L(b_i(\omega),\omega)\rho_i], \quad i=1,2.
\end{equation*}
Consider the pair $(b,\rho)$ defined by
\begin{equation*}
b=\frac{\lambda b_1\rho_1 + (1-\lambda) b_2\rho_2}{\lambda \rho_1 + (1-\lambda)\rho_2}, \quad \rho = \lambda \rho_1 + (1-\lambda) \rho_2.
\end{equation*}
They clearly satisfy $\mathbf{E}(b\rho) = \lambda v_1 + (1-\lambda) v_2$. It is also straightforward to check that \eqref{eq:FPweak} holds for this pair $(b,\rho)$. Therefore, $(b,\rho) \in \mathscr{E}$.

By the convexity of $v\mapsto L(v,\omega)$, and for the pair $(b,\rho)$ given above, we have
\begin{equation*}
    \begin{aligned}
    \mathbf{E}[L(b,\omega)\rho] &\le \mathbf{E}\left[\rho\left(\frac{\lambda \rho_1}{\lambda \rho_1 + (1-\lambda)\rho_2}L(b_1,\omega) + \frac{(1-\lambda) \rho_2}{\lambda \rho_1 + (1-\lambda)\rho_2}L(b_2,\omega)\right)\right]\\
    &=\mathbf{E}[\lambda L(b_1,\omega)\rho_1 + (1-\lambda) L(b_2,\omega)\rho_2]\\
    &\le \lambda \overline{L}(v_1) + (1-\lambda) \overline{L}(v_2) + r.
    \end{aligned}
\end{equation*}
This implies that
\begin{equation*}
    \ol{L}(v) \le \lambda \ol{L}(v_1) + (1-\lambda)\ol{L}(v_2) + r.
\end{equation*}
Sending $r\to 0$, we obtain the convexity of $\overline{L}$.

To check the growth rate of $\ol{L}$, fix any $v\in \R^n$, we see that the pair $(b,\rho)$ given by $b(\omega)\equiv v$ and $\rho(\omega)\equiv 1$ belongs to $\mathscr{E}$ and satisfies $\mathbf{E}[b\rho] = v$. In view of the upper bound in \eqref{A2L} and the definition of $\ol{L}$ we have $\overline{L}(v)\le |v|^{\beta'} + c_2$. This yields the upper bound of $\ol{L}(v)$. To prove the lower bound, we use the lower bound of $L$ and then Jensen's inequality to obtain, for any $(b,\rho)\in \mathscr{E}$ with $\mathbf{E}[b\rho]=q$,
\begin{equation*}
\begin{aligned}
\mathbf{E}[L(b,\omega)\rho] \ge \int_\Omega \left(c_1'|b(\omega)|^{\beta'}-c_4\right)\,\rho \mathrm{d}\mathbf{P} \ge c'_1\left|\int_\Omega b(\omega)\,\rho \mathrm{d}\mathbf{P}\right|^{\beta'} -c_4
\end{aligned}
\end{equation*}
This yields $\overline{L}(q)\ge c_1'|q|^{\beta'}-c_4$, i.e., the lower bound in \eqref{A2L}, and completes the proof.
\end{proof}

The effective Hamiltonian $\overline{H}$ is hence defined by
\begin{equation}\label{EffH}
    \overline{H}(p):= \sup_{q\in\mathbb{R}^n}\Big( \langle p, q \rangle - \overline{L}(q) \Big) = \sup_{(b,\rho)\in \mathscr{E}}\Big( \langle p, \mathbf{E}[b(\omega)\rho(\omega)] \rangle - \mathbf{E}[L(b(\omega),\omega)\rho(\omega)]\Big).
\end{equation}
The properties about $\overline{L}$ in Proposition \ref{prop:propolL} immediately transfer to the effective Hamiltonian: $\overline{H}(p)$ is convex in $p$ and $\overline{H}$ satisfies \eqref{A1H}. 

With the above definition of effective Hamiltonian, the proclaimed effective Hamilton-Jacobi equation \eqref{hHJB} is determined. Since $\overline{H}$ is convex, the Hopf-Lax formula yields the following representation formula for its unique viscosity solution:
\begin{equation}
\label{eq:HopfLax}
    \overline{u}(t,x) = \sup_{y\in\mathbb{R}^n}\left(u_{0}(y)- t \overline{L}(\frac{y-x}{t})\right).
\end{equation}
Combine this formula with the expression of $\overline{L}$, we get, for all $x\in \R^n$ and $t\ge 0$,
\begin{equation}
    \label{eq:ubarformula}
\overline{u}(t,x) = \sup_{(b,\rho) \in \mathscr{E}} \Big( u_0(x+t\mathbf{E}[b(\omega)\rho(\omega)]) - t\mathbf{E}[L(b(\omega),\omega)\rho(\omega)]\Big).
\end{equation}
We will use the following simple rescaling invariance
\begin{equation*}
    \overline{u}(t,x) = \eps \overline{v}(t/\eps,x/\eps)%\eps \overline{S}(t/\eps)(\eps^{-1}u_0(\eps\,\cdot))(x/\eps)
\end{equation*}
where $\overline{v}$ is the solution to the HJ equation \eqref{hHJB} with initial condition $v(0,x) = \eps^{-1}u_0(\eps x)$.

%%%%%%%%%%%
%%%%%%%%%%%
\section{Regularity and Lower Bounds}
\label{sec:lowerbdd}

\subsection{Some regularity results} Using the representation formula for the solution $u_\eps$ to the HJB equation \eqref{HJB}, we first establish some regularity results for $u_\eps$. They play an important role in our analysis. 
For those results, we impose the stronger Lipschitz condition \eqref{BLip} on the initial data.

\medskip

The main result of this subsection is the following theorem which exhibits regularity results for $u_\eps$ with explicit dependence on $\eps$. The proof can be found at the end of this subsection.

\begin{thm}\label{thm:regularity} Suppose Assumptions {\upshape ({\bf A1})} and {\upshape ({\bf A3$'$})} hold, and $\alpha \in (1,2)$ in the HJB equation \eqref{HJB}. Then there is a universal constant $C<\infty$ depending only on the bounding parameters in {\upshape({\bf A1})} and {\upshape({\bf A3$'$})} such that, for all $t>0$, $x,y\in \R^n$ and $\omega\in \Omega$,
\begin{equation}
    \label{eq:regx}
|u_\eps(t,x,\omega)-u_\eps(t,y,\omega)| \le C\left(|x-y| +\eps^{1-\frac1\alpha} |x-y|^{\frac1\alpha} + \eps^{(1-\frac1\alpha)\frac{1}{\beta'}} |x-y|^{1-(1-\frac1\alpha)\frac{1}{\beta'}}\right).
\end{equation}
Moreover, for any $T<\infty$, there is a constant $C = C_T$ depending on the bounding parameters in {\upshape({\bf A1})} and {\upshape({\bf A3$'$})} and on $T$ such that for all $t,s \in [0,T]$, $x\in \R^n$ and $\omega \in \Omega$,
\begin{equation}
    \label{eq:regt}
    |u_\eps(t,x,\omega)-u_\eps(s,x,\omega)| \le C\left( |t-s| + \eps^{1-\frac1\alpha}|t-s|^{\frac1\alpha}\right) + C_T|t-s|^{\frac1\beta}.
\end{equation}
\end{thm}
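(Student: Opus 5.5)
Both estimates will come from the control representation \eqref{vf}. The plan is to take a nearly optimal control for one endpoint, splice it into an admissible control for the other, and compare costs. Since $u_\eps$ is a supremum, it suffices to prove one-sided bounds $u_\eps(t,y)\ge u_\eps(t,x)-(\text{RHS})$ and then use symmetry.

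\emph{Spatial estimate.} Set $h=|x-y|$ and pick a control $c$ that is $\delta$-optimal for $u_\eps(t,x)$. For the point $y$, use a \emph{steering} control on an initial interval $[0,s]$, with $s\le t$ to be chosen. On $[0,s]$ apply the constant drift $(x-y)/s$ in the slow variables, that is, $c(\eps^{-1}r,\cdot)\equiv (x-y)/s$. The same L\'evy path then moves the process started at $y$ to $X^{\eps,x}_0+\eps L^\alpha_{s/\eps}$ at time $s$. Afterwards, apply $c$ shifted in time by $s$. The shifted state $X_r+\eps L_{s/\eps}$ differs from the original one, so I will instead use the flow-in-law/Markov structure. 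By the dynamic programming (semigroup) property, $u_\eps(t,y)\ge \mathbb{E}\,u_\eps(t-s, y+(x-y)+\eps L^\alpha_{s/\eps}) - sL$-cost, and the steering cost is at most $s(c_3'(h/s)^{\beta'}+c_2)$ by \eqref{A2L}.

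Next I compare $u_\eps(t-s,x+\eps L_{s/\eps})$ with $u_\eps(t,x)$. The same DP property with the zero control gives a lower bound, and \eqref{eq:Ltass} gives $\eps L_{s/\eps}\eqdistr \eps^{1-1/\alpha}L_s$, whose size is of order $\eps^{1-1/\alpha}s^{1/\alpha}$. This is circular, because it uses regularity of $u_\eps$ itself. To avoid the circularity, I will use the representation directly: the $\delta$-optimal $c$ for $(t,x)$, applied to the process from $y$ with the drift difference absorbed during $[0,s]$, leads to terminal positions differing by $(y-x)+(\text{noise discrepancy})$, and the Lipschitz bound \eqref{BLip} on $u_0$ controls the terminal payoff.

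The main obstacle is that the drift $c(\eps^{-1}r,\eps^{-1}X)$ and the running cost $L(\cdot,\tau_{X/\eps}\omega)$ depend on position through the fast environment, and no continuity of $L$ in $x$ with respect to $\eps$ is usable uniformly. The remedy is the KRV trick: instead of transporting the control, \emph{transport the starting point}. Run the optimal control for $x$, and for $y$ use the control that first moves deterministically (in law) from $y$ onto the distribution of the $x$-process at a time $s$. That is, choose the drift on $[0,s]$ so that the law of the $y$-process at time $s$ coincides with the law of the $x$-process at time $s$ under zero drift plus a small correction. Then continue with a Markov control copying $c$. The leftover discrepancy between the two laws comes from the noise over $[0,s]$, of order $\eps^{1-1/\alpha}s^{1/\alpha}$, which enters through the Lipschitz bound of $u_0$ only. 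The total cost is therefore
\[
Kh+ C\eps^{1-\frac1\alpha}s^{\frac1\alpha}+ s\big((h/s)^{\beta'}+1\big)+ C s ,
\]
where the last term accounts for the cost $L$ on $[0,s]$ being bounded by \eqref{A2L}. Choosing $s=h$ gives the first two terms of \eqref{eq:regx}. Choosing instead $s$ to balance $h^{\beta'}s^{1-\beta'}$ against $\eps^{1-1/\alpha}s^{1/\alpha}$, and taking the minimum over the two choices, produces the mixed term $\eps^{(1-\frac1\alpha)\frac1{\beta'}}h^{1-(1-\frac1\alpha)\frac1{\beta'}}$. I expect this balancing and the precise matching of laws to be the delicate point. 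If exact coupling is awkward, I will use the L\'evy noise only through its moment bound $\mathbb{E}|\eps L_{s/\eps}|\le C\eps^{1-1/\alpha}s^{1/\alpha}$, valid since $\alpha>1$.

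\emph{Time estimate.} For $s<t$, the semigroup property gives $u_\eps(t,x)=S_\eps(s)u_\eps(t-s,\cdot)$, where $S_\eps(s)$ denotes the solution operator of \eqref{HJB} over a time interval of length $s$. The zero control yields
\[
u_\eps(t,x)\ge \mathbb{E}\, u_\eps(t-s,x+\eps L_{s/\eps})-c_2 s .
\]
Combined with \eqref{eq:regx}, and using \eqref{eq:Ltass} for the expectation, this gives the lower bound with terms $|t-s|+\eps^{1-1/\alpha}|t-s|^{1/\alpha}$. For the upper bound, I will use a $\delta$-optimal control on $[0,s]$. The coercivity in \eqref{A2L} bounds the displacement $\int_0^s|c|$ by $C(s+\text{cost})$, and since $u_\eps$ is bounded by $\|u_0\|_\infty+C_Tt$, the cost on $[0,s]$ is bounded. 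Optimizing $a\,\ell-c_1'\ell^{\beta'}s^{1-\beta'}$ over the displacement $\ell$, with $a$ the Lipschitz constant, yields the term $C_T|t-s|^{1/\beta}$. The noise displacement is controlled again through \eqref{eq:regx} and \eqref{eq:Ltass}, giving \eqref{eq:regt}.
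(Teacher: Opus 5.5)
\textbf{There is a genuine gap in the spatial estimate \eqref{eq:regx}.} The time estimate inherits it, because it relies on \eqref{eq:regx}. As you specify it, your second construction for the $y$-process is: steer on $[0,s]$ so that its law at time $s$ matches the $x$-process ``under zero drift plus a small correction'', then copy $c$. This does not produce a payoff comparable to $u_\eps(t,x)$.

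\begin{itemize}
\item The near-optimal $x$-process is also driven by $c$ on $[0,s]$. At time $s$ it therefore sits at $x+\int_0^s c\,\dd r+\eps L^\alpha_{s/\eps}$. The mismatch $\int_0^s c\,\dd r$ is neither small nor a noise effect.
\item No mismatch at time $s$ can be carried to time $t$ ``through the Lipschitz bound of $u_0$ only''. After time $s$, the feedback drift $c(r,\cdot)$ (whose Lipschitz constant is uncontrolled) and the running cost $L(\cdot,\tau_{X/\eps}\omega)$ both depend on position. Different positions at time $s$ give different trajectories and different costs.
\end{itemize}
So the circularity you noticed in your DPP attempt is not actually removed.

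\textbf{What works, and what the paper does, is an exact synchronous coupling that transports the optimal control itself.} Use the same L\'evy path and set $Y_r=X_r+\phi(r)$, with $\phi(r)=\frac{y-x}{\eps}-r\frac{y-x}{|y-x|}$ for $r\le |y-x|/\eps$ and $\phi\equiv 0$ afterwards. This corresponds to the admissible drift $c_1(r,z)=c(r,z-\phi(r))+\phi'(r)$, so that $c_1(r,Y_r)=c(r,X_r)+\phi'(r)$ with $|\phi'|\le 1$.

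For $t\ge h:=|x-y|$, the two paths coincide from slow time $h$ on. The terminal payoffs are then identical, and the only loss is the running cost on $[0,h/\eps]$. By \eqref{A2L} this loss is at most $Ch+C\eps\int_0^{h/\eps}|c(r,X_r)|^{\beta'}\,\dd r$.

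Bounding the expectation of that integral requires the a priori bound \eqref{sCset} on near-optimal controls from Lemma~\ref{Cset}, which your argument never uses. It is this bound, not a leftover noise discrepancy, that produces the term $\eps^{1-\frac1\alpha}h^{\frac1\alpha}$. Your cost accounting charges only the steering cost and misses it.

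\textbf{The regime $t<h$ also needs separate treatment.} There, any construction that must reach the $x$-path within the horizon breaks down: with $s\le t<h$, your steering cost $h^{\beta'}s^{1-\beta'}$ is unbounded as $t\to0$. The paper handles this regime with the short-time estimate of Lemma~\ref{CuI} together with \eqref{BLip}. That is where the third term $\eps^{(1-\frac1\alpha)\frac1{\beta'}}h^{1-(1-\frac1\alpha)\frac1{\beta'}}$ comes from. Your balancing of $h^{\beta'}s^{1-\beta'}$ against $\eps^{1-\frac1\alpha}s^{\frac1\alpha}$ does not generate it: taking the minimum of two bounds cannot add a term, and the optimizer has a different exponent.

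Apart from its dependence on \eqref{eq:regx}, your time estimate has the same structure as the paper's. One side uses the zero control plus DPP. The $|t-s|^{\frac1\beta}$ term comes from the Lipschitz bound of $u_0$ plus H\"older's inequality with a bounded total cost.
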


Such regularity results are clearly important. In particular, together with the result in Lemma \ref{CuI} below which yields locally (in $t$) uniform (in $x$) bound for $|u_\eps(t,x)|$, the estimates \eqref{eq:regx} and \eqref{eq:regt} imply that for each fixed $T,R>0$, the family $\{u_\eps(\cdot,\cdot,\omega) \,:\, \omega\in \Omega, \eps\in (0,1)\}$ is a uniformly bounded and equi-continuous family.

\medskip

As is standard (see, e.g.,\cite{KRV06}), we first show that the control set $\mathscr{C}$ of drift fields can be reduced to contain those with better behaviors.

\begin{lem}\label{Cset}
Under the same assumptions of Theorem \ref{thm:regularity}. Consider the control formula \eqref{vf} for the solution. Then the control set $\mathscr{C}$ in \eqref{vf} can be replaced by the subset $\mathscr{C}^{\ast} \subset \mathscr{C}$ defined by: 
$c\in \mathscr{C}^*$ if and only if there exists a $C < \infty$ depending only on the bounding parameters in {\upshape({\bf A1})} and {\upshape ({\bf A3$'$})} so that for all $t\ge 0$ and $x\in \R^n$,
\begin{equation}\label{sCset}
c'_1 \mathbb{E}^{Q^{c,\epsilon}_{x}} \int_0^t|c(\eps^{-1}s,\eps^{-1}X^\eps_s)|^{\beta'}\,\mathrm{d}s  \leq C(t + \epsilon^{1-\frac{1}{\alpha}} t^{\frac{1}{\alpha}}).
\end{equation}
%where $\xi_\epsilon$ is defined in \eqref{eq:xiepsdef}.
\end{lem}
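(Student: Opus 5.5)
The strategy is the standard comparison against the zero-drift control. I will show that any control violating \eqref{sCset} is beaten by $c\equiv 0$, up to an error that can be absorbed. Then the supremum in \eqref{vf} is unchanged if we restrict to controls whose gain is at least that of $c\equiv0$ minus a small slack, and such controls satisfy \eqref{sCset}.

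\textbf{Step 1: a lower bound for the value.} Fix $t,x$ and take $c\equiv 0\in\mathscr{C}$. Under $Q^{0,\eps}_x$ we have $X^\eps_s=x+\eps L^\alpha_{s/\eps}$. By the self-similarity \eqref{eq:Ltass}, $\eps L^\alpha_{s/\eps}\eqdistr \eps^{1-\frac1\alpha}L^\alpha_s$. Since $\alpha>1$, $\mathbb{E}|L^\alpha_t|=t^{1/\alpha}\mathbb{E}|L^\alpha_1|<\infty$. The Lipschitz bound \eqref{BLip} then gives
\[
\mathbb{E}^{Q^{0,\eps}_x}u_0(X^\eps_t)\ge u_0(x)-K\eps^{1-\frac1\alpha}t^{\frac1\alpha}\mathbb{E}|L^\alpha_1|.
\]
By \eqref{A2L}, $L(0,\omega)\le c_2$, so $u_\eps(t,x)\ge u_0(x)-C\eps^{1-\frac1\alpha}t^{\frac1\alpha}-c_2t$.

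\textbf{Step 2: an upper bound for the gain of a general control.} Take any $c\in\mathscr{C}$ and write $D:=\mathbb{E}^{Q^{c,\eps}_x}\int_0^t|c(\eps^{-1}s,\eps^{-1}X^\eps_s)|^{\beta'}\,\dd s$. By \eqref{eq:Xepsc},
\[
X^\eps_t-x=\int_0^t c(\eps^{-1}s,\eps^{-1}X^\eps_s)\,\dd s+\eps L^\alpha_{t/\eps}.
\]
Hence $\mathbb{E}|X^\eps_t-x|\le t^{1/\beta}D^{1/\beta'}+C\eps^{1-\frac1\alpha}t^{\frac1\alpha}$ by H\"older. Using the lower bound in \eqref{A2L}, the gain $J(c)$ satisfies
\[
J(c)\le u_0(x)+Kt^{1/\beta}D^{1/\beta'}+CK\eps^{1-\frac1\alpha}t^{\frac1\alpha}-c_1'D+c_4t.
\]

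\textbf{Step 3: comparison.} By Young's inequality, $Kt^{1/\beta}D^{1/\beta'}\le \frac{c_1'}{2}D+C't$ with $C'$ depending only on $K,c_1',\beta$. Comparing with Step 1, any $c$ whose gain exceeds $u_\eps(t,x)-\eta$ must satisfy
\[
\tfrac{c_1'}{2}D\le (C'+c_2+c_4)t+2C\eps^{1-\frac1\alpha}t^{\frac1\alpha}+\eta.
\]
This is \eqref{sCset} at the fixed $(t,x)$, with the slack $\eta$ absorbed: take $\eta\le t$, or use an $\eta$-optimal sequence.

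\textbf{Main obstacle: uniformity in $(s,y)$.} The definition of $\mathscr{C}^*$ requires \eqref{sCset} for all starting times and points, whereas Steps 1--3 only use the fixed terminal horizon $t$ and start $x$. I would handle this by a dynamic-programming modification. Along an almost-optimal control, replace $c$ on each sub-window $[s,t]$ and starting state by the zero control whenever that tail fails the bound. By the Markov property of the jump SDE and Step 3 applied on the tail, this modification does not decrease the gain by more than the slack. Alternatively, I would interpret \eqref{sCset} as a bound for the relevant horizon, as in \cite{KRV06}, which is all the later arguments use. Keeping the modified drift Lipschitz in $x$ (so that it stays in $\mathscr{C}$) needs a smoothing or approximation step, which I expect to be technical but routine.
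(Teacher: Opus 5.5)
Your Steps 1--3 are exactly the paper's proof. The paper also compares with $c\equiv 0$, bounds $\mathbb{E}^{Q^{c,\eps}_x}|X^\eps_t-x|$ by H\"older, and closes with Young's inequality. It phrases the last step as $A\Theta^{1/\beta'}-\Theta+B\ge 0\Rightarrow \Theta\le A^{\beta}+\beta B$, with $\Theta=\mathbb{E}^{Q^{c,\eps}_x}\xi_\eps(t)+c_4t\ge c_1'D$. Your handling of the slack $\eta$ is fine.

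The uniformity issue in your last paragraph is genuine, and the paper does not resolve it either. Its proof gives only the bound at the horizon $t$ and starting point $x$ of the value being computed, then notes that the constants are uniform in $(t,x,\omega)$.

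Your dynamic-programming repair would not supply the literal ``for all $t\ge 0$'' clause. Switching to $c\equiv 0$ on tails $[s,t]$ controls tail energies $\int_s^t$. The clause, however, concerns initial windows $\int_0^{t'}$ with $t'<t$, and arbitrary starting points. On an initial window the dynamic-programming terminal payoff is $u_\eps(t-t',\cdot)$, not $u_0$. Redoing Step 3 there would therefore need a spatial modulus of continuity for $u_\eps$, which is precisely what Theorem~\ref{thm:regularity} derives from this lemma.

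Your fallback remark that the fixed-horizon bound ``is all the later arguments use'' is also not accurate. The spatial part of the proof of Theorem~\ref{thm:regularity} applies \eqref{sCset} on $[0,|y-x|]$ to a control optimal at horizon $t$. That is exactly an initial-window bound at a shorter horizon. So what you, and the paper, actually establish is the version of the lemma at the fixed $(t,x)$.
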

\begin{proof}
In the optimal control formula \eqref{vf}, set the drift field $c$ to be zero, and let $Q^{0,\eps}_x$ be the jump diffusion process given by \eqref{eq:Xepsc} with $c=0$. By Assumptions {\upshape ({\bf A1})} and {\upshape ({\bf A3$'$})}, we find 
\begin{equation}
\label{eq:lowercontrol}
\begin{aligned}
    u_{\epsilon}(t,x,\omega) - u_{0}(x) \geq &  \mathbb{E}^{Q^{0,\epsilon}_{x}} \left( u_{0}(X^{\epsilon}_{t}) - u_{0}(x) - \xi_{\epsilon}(t) \right) \nonumber \\
    \geq &  \mathbb{E}^{Q^{0,\epsilon}_{x}}\left(-K|X^{\epsilon}_{t} - x| - \xi_{\epsilon}(t) \right) \nonumber \\
    \geq & - K\mathbb{E}^{Q^{0,\epsilon}_{x}} | \epsilon L^{\alpha}_{t/\epsilon}| - c_2 t \nonumber \\
    \geq & - K\epsilon^{1-\frac{1}{\alpha}} \mathbb{E}^{Q^{0,1}_{x}} |L^{\alpha}_{t}| - c_2 t\nonumber \\
    \geq & - CK \epsilon^{1-\frac{1}{\alpha}} t^{\frac{1}{\alpha}} - c_2 t.
\end{aligned}
\end{equation}
In the last two lines we used the scaling property \eqref{eq:Ltass} and standard bound for $L^\alpha_t$. The bound above provides a lower bound of the error $u_\eps(t,x,\omega) -u_0(x)$, and therefore in the control representation we need to consider controls (drifts) $c(t,x)$'s which yield relative gain that beats this lower bound. In particular, by \eqref{vf} and \eqref{BLip}, such $c$ should verify
\begin{equation}\label{LBE1}
    \mathbb{E}^{Q^{c,\epsilon}_{x}} [K|X^{\epsilon}_{t} - x| - \xi_{\epsilon}(t)] \geq  - CK\epsilon^{1-\frac{1}{\alpha}} t^{\frac{1}{\alpha}} - c_2 t.   
\end{equation}
On the other hand, for the SDE \eqref{eq:Xepsc}, using the aforementioned bound of $\eps L^\alpha_{t/\eps}$, the estimates in \eqref{A2L} and H\"{o}lder's inequality, we have
\begin{align}\label{LBE2}
    \mathbb{E}^{Q^{c,\epsilon}_{x}} |X^{\epsilon}_{t} - x| \leq & \mathbb{E}^{Q^{c,\epsilon}_{x}}\left( \int^{t}_{0} |c(\epsilon^{-1}s, \epsilon^{-1}X^{\epsilon}_{s})| \mathrm{d}s \right) + C\epsilon^{1-\frac{1}{\alpha}} t^{\frac{1}{\alpha}} \nonumber \\
    \leq & t^{1/\beta} \mathbb{E}^{Q^{c,\epsilon}_{x}}\left( \int^{t}_{0} |c(\epsilon^{-1}s, \epsilon^{-1}X^{\epsilon}_{s})|^{\beta'} \mathrm{d}s \right)^{1/\beta'}  + C\epsilon^{1-\frac{1}{\alpha}} t^{\frac{1}{\alpha}} \nonumber \\
    \leq & \frac{t^{1/\beta}}{{c'_1}^{1/\beta'}} \left(\mathbb{E}^{Q^{c,\epsilon}_{x}}\xi_{\epsilon}(t) + c_4 t \right)^{1/\beta'}  + C\epsilon^{1-\frac{1}{\alpha}} t^{\frac{1}{\alpha}}.
\end{align}
Plugging above estimate (\ref{LBE2}) into (\ref{LBE1}), the control $c\in\mathscr{C}$ of interest should satisfy
\begin{equation}
    K t^{\frac{1}{\beta}} {c'_1}^{-\frac{1}{\beta'}}(\mathbb{E}^{Q^{c,\epsilon}_{x}}\xi_{\epsilon}(t) + c_4 t) ^{\frac{1}{\beta'}}  - (\mathbb{E}^{Q^{c,\epsilon}_{x}}\xi_{\epsilon}(t) + c_4 t) + 2C K\epsilon^{1-\frac{1}{\alpha}} t^{\frac{1}{\alpha}} + (c_2+c_4)t  \geq 0.
\end{equation}
For the quantity $\Theta = \mathbb{E}^{Q^{c,\eps}_x} \xi_\eps(t) + c_4t$, the above inequality is of the form $A\Theta^{1/\beta'} - \Theta + B\ge 0$ for some positive quantities $A,B$ that can be easily read from the inequality above. Applying the Young's inequality we get
\begin{equation*}
    \Theta \le A^{\beta} + \beta B.
\end{equation*}
From this we deduce that we only need to consider $c\in \mathscr{C}$ such that
\begin{equation*}
c'_1 \mathbb{E}^{Q^{c,\eps}_x}\int_0^t |c(\eps^{-1}s,\eps^{-1}X^\eps_s)|^{\beta'}\,\mathrm{d}s \le \Theta \le A^\beta + \beta B.
\end{equation*}
Plug the explicit forms of $A,B$ into the above inequality and observe that all the inequalities above are uniform in $t,x$ and $\omega$. We hence have found the desired reduced control set $\mathscr{C}^*$.
\end{proof}

The next lemma shows the continuity of $u_{\epsilon}$ with respect to the initial data, and it also establishes a bound of $\sup_{t\in [0,T]} \sup_{x\in \R^n}|u_\eps(t,x)|$ that is uniform in $\eps \in (0,1)$, for each fixed $T>0$.

\begin{lem}\label{CuI}
Under the same assumptions of Theorem \ref{thm:regularity},  
there is a constant $C<\infty$ depending only on the bounding parameters in {\upshape({\bf A1})} and {\upshape({\bf A3$'$})} such that for every $\epsilon \in (0,1]$,
\begin{equation}\label{Eu1}
    \sup_{\omega,x} |u_{\epsilon}(t,x,\omega) - u_{0}(x)| \leq C\left(t + \eps^{1-\frac1\alpha} t^{\frac1\alpha} + \eps^{(1-\frac1\alpha)\frac{1}{\beta'}} t^{1-(1-\frac1\alpha)\frac{1}{\beta'}}  \right).
\end{equation}
\end{lem}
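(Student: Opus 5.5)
The plan is to derive the two-sided bound \eqref{Eu1} from the control representation \eqref{vf}, treating the lower and upper bounds separately.

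\emph{Lower bound.} Take the zero control $c\equiv 0$ in \eqref{vf}. This is exactly the computation carried out at the start of the proof of Lemma \ref{Cset}: the Lipschitz bound \eqref{BLip}, the self-similarity \eqref{eq:Ltass}, and $L(0,\omega)\le c_2$ give
\begin{equation*}
u_\eps(t,x,\omega)-u_0(x)\ge -CK\eps^{1-\frac1\alpha}t^{\frac1\alpha}-c_2t .
\end{equation*}
This is uniform in $x$ and $\omega$ and is already dominated by the right-hand side of \eqref{Eu1}.

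\emph{Upper bound.} By Lemma \ref{Cset} it suffices to take the supremum in \eqref{vf} over $c\in\mathscr{C}^*$. For such $c$ we write
\begin{equation*}
\mathbb{E}^{Q^{c,\eps}_x}\bigl(u_0(X^\eps_t)-u_0(x)-\xi_\eps(t)\bigr)\le K\,\mathbb{E}^{Q^{c,\eps}_x}|X^\eps_t-x|+c_4t ,
\end{equation*}
using the lower bound $L\ge -c_4$ from \eqref{A2L}. We then bound $\mathbb{E}|X^\eps_t-x|$ exactly as in \eqref{LBE2}: the drift part is controlled by H\"older's inequality by
\begin{equation*}
t^{1/\beta}\Bigl(\mathbb{E}\int_0^t|c|^{\beta'}\,\dd s\Bigr)^{1/\beta'},
\end{equation*}
and the jump part by $C\eps^{1-\frac1\alpha}t^{\frac1\alpha}$. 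Inserting \eqref{sCset} gives
\begin{equation*}
\mathbb{E}|X^\eps_t-x|\le C t^{1/\beta}\bigl(t+\eps^{1-\frac1\alpha}t^{\frac1\alpha}\bigr)^{1/\beta'}+C\eps^{1-\frac1\alpha}t^{\frac1\alpha}.
\end{equation*}

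\emph{Main step.} The remaining work is the elementary simplification of the first term. By subadditivity of $s\mapsto s^{1/\beta'}$ (recall $1/\beta'<1$),
\begin{equation*}
t^{1/\beta}\bigl(t+\eps^{1-\frac1\alpha}t^{\frac1\alpha}\bigr)^{1/\beta'}\le t+\eps^{(1-\frac1\alpha)\frac1{\beta'}}\,t^{\frac1\beta+\frac1{\alpha\beta'}} .
\end{equation*}
The exponent simplifies as
\begin{equation*}
\frac1\beta+\frac{1}{\alpha\beta'}=1-\frac1{\beta'}+\frac1{\alpha\beta'}=1-\Bigl(1-\frac1\alpha\Bigr)\frac1{\beta'},
\end{equation*}
which is precisely the third term in \eqref{Eu1}.

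Combining the lower and upper bounds, every constant depends only on $c_1',c_2,c_4,K$ and the moment bound $\mathbb{E}|L^\alpha_1|<\infty$ (finite since $\alpha>1$). Hence the estimate is uniform in $x$, $\omega$ and $\eps\in(0,1]$. Taking the supremum over $c\in\mathscr{C}^*$ and then over $(\omega,x)$ yields \eqref{Eu1}.

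The only point needing care is that Lemma \ref{Cset} provides \eqref{sCset} with a constant uniform in $x$, $t$ and $\omega$, so that the reduction to $\mathscr{C}^*$ loses nothing; the rest is bookkeeping.
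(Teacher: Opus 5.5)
Your proposal is correct and follows the paper's proof. The lower bound is the zero-control estimate \eqref{eq:lowercontrol}. The upper bound restricts to $\mathscr{C}^*$ and combines \eqref{BLip}, $-\xi_\eps(t)\le c_4t$, the drift/jump splitting of \eqref{LBE2} and \eqref{sCset}; your subadditivity step giving $t^{1/\beta}\bigl(t+\eps^{1-\frac1\alpha}t^{\frac1\alpha}\bigr)^{1/\beta'}\le t+\eps^{(1-\frac1\alpha)\frac1{\beta'}}t^{1-(1-\frac1\alpha)\frac1{\beta'}}$ just makes explicit the exponent bookkeeping the paper leaves to the reader.
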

\begin{proof}
Lemma \ref{Cset} implies that $u_{\epsilon}$ satisfies 
\begin{equation}
    u_{\epsilon}(t,x,\omega) - u_{0}(x) = \sup_{c\in\mathscr{C}^{\ast}} \mathbb{E}^{Q^{c,\epsilon}_{x}}[u_{0}(X^\eps_t)
    - u_{0}(x)-\xi_{\epsilon}(t)].
\end{equation}
By \eqref{BLip} and the simple fact that $-\xi_\eps(t) \le c_4 t$, we have%, (\ref{LBE2}), and Lemma \ref{Cset}, we have for every $\delta>0$,
\begin{align*}
    u_{\epsilon}(t,x,\omega) - u_{0}(x) &\le \sup_{c\in\mathscr{C}^{\ast}} \mathbb{E}^{Q^{c,\epsilon}_{x}}[K |X^{\epsilon}_t - x|] + c_4 t\\
    &\leq \sup_{c\in\mathscr{C}^{\ast}}\mathbb{E}^{Q^{c,\epsilon}_{x}}\left( K\int^{t}_{0} |c(\epsilon^{-1}s, \epsilon^{-1}X^{\epsilon}_{s})| \mathrm{d}s \right) + C K \epsilon^{1-\frac{1}{\alpha}} t^{\frac{1}{\alpha}} +c_4 t.
\end{align*}
Combining the above with the fact that $c \in \mathscr{C}^*$ guarantees \eqref{Cset}, we obtain an upper bound of (\ref{Eu1}). The inequality obtained in \eqref{eq:lowercontrol} gives a lower bound. The proof is complete.
\end{proof}

%%%%%%%%%
\medskip

Finally we can complete the proof of Theorem \ref{thm:regularity}.
\begin{proof}[Proof of Theorem \ref{thm:regularity}] \emph{Regularity in the spatial variable}. Fix two different points $x,y\in \R^n$ and $t>0$. Suppose $c \in \mathscr{C}$ is an optimal control for $u_\eps(t,x)$, i.e.,
\begin{equation*}
u_\eps(t,x) = \mathbb{E}^{\mathbb{Q}^{c}_{x/\eps}} \left( u_0(\eps X_{t/\eps}) - \eps \int_0^{t/\eps} L(c(s,X_s),\tau_{X_s}\omega)\,\mathrm{d}s\right).
\end{equation*}
We consider the coupled path $Y_s$ defined by
\begin{equation*}
Y_s = \left\{
\begin{aligned}
&X_s + \frac{y-x}{\eps} - \frac{s(y-x)}{|y-x|}, \quad &0\le s\le \eps^{-1}|y-x|,\\
&X_s, \quad &s\ge \eps^{-1}|y-x|.
\end{aligned}
\right.
\end{equation*}
Then $Y_s$ is a jump-diffusion process starting from $y/\eps$ at time zero. Clearly we have
\begin{equation}
\label{eq:XYcouple-1}
\eps |Y_{t/\eps} - X_{t/\eps}| \le |y-x|, \quad \forall t\ge 0.
\end{equation}
The generator of $Y_s$ is $\mathscr{L} + \langle c_1(s,z),\nabla\rangle$ where the drift $c_1$ has the form
\begin{equation*}
c_1(s,z) = \left\{
\begin{aligned}
&c\left(s,z - \frac{y-x}{\eps} + \frac{s(y-x)}{|y-x|}\right) - \frac{y-x}{|y-x|}, \quad &0\le s\le \eps^{-1}|y-x|,\\
&c(s,z), \quad &s\ge \eps^{-1}|y-x|.
\end{aligned}
\right.
\end{equation*}
We check that $c_1\in \mathscr{C}$ and $|c-c_1|\le 1$ uniformly. For $t\ge |y-x|$, we have $X_{t/\eps} = Y_{t/\eps}$. Using \eqref{A2L} we also have
\begin{equation*}
\begin{aligned}
&\eps\int_0^{t/\eps} L(c_1(s,Y_s),\tau_{Y_s}\omega)\,\mathrm{d}s - \eps\int_0^{t/\eps} L(c_1(s,X_s),\tau_{X_s}\omega) \,\mathrm{d}s \\
= &\eps\int_0^{|y-x|/\eps} L(c_1(s,Y_s),\tau_{Y_s}\omega)-  L(c(s,X_s),\tau_{X_s}\omega)\,\mathrm{d}s\\
\ge & -(c_2+c_4)|y-x| - c'_3 \eps \int_0^{|y-x|/\eps} |c(s,X_s)|^{\beta'}\,\mathrm{d}s.
\end{aligned}
\end{equation*}
In view of the variational formula \eqref{vf} and by Lemma \ref{Cset}, we get
\begin{equation*}
u_\eps(t,y)-u_\eps(t,x) \ge -(c_2+c_4)|y-x| - C(|y-x|+\eps^{1-\frac1\alpha}|y-x|^{\frac1\alpha}).
\end{equation*}
For $t< |y-x|$, by Lemma \ref{CuI} we have
\begin{equation*}
    u_\eps(t,y)-u_\eps(t,x) \ge u_0(y) - u_0(x) - 2C\left(t + \eps^{1-\frac1\alpha} t^{\frac1\alpha} + \eps^{(1-\frac1\alpha)\frac{1}{\beta'}} t^{1-(1-\frac1\alpha)\frac{1}{\beta'}}\right).
\end{equation*}
Using \eqref{BLip} and $t< |x-y|$ we obtain
\begin{equation*}
    u_\eps(t,y)-u_\eps(t,x) \ge -C\left(|x-y| +\eps^{1-\frac1\alpha} |x-y|^{\frac1\alpha} + \eps^{(1-\frac1\alpha)\frac{1}{\beta'}} |x-y|^{1-(1-\frac1\alpha)\frac{1}{\beta'}}\right).
\end{equation*}
The roles of $x$ and $y$ can be exchanged, and hence we have proved \eqref{eq:regx}. Note that this bound is uniform for $\omega \in \Omega$ and $t>0$.

\medskip

\emph{Regularity in the time variable}. Here we establish some regularity of $u_\eps$ (uniform in $\omega$ and dependence on $\eps$ is shown explicitly and can be made uniform in $\eps$ as well) in time $t$. Fix $x\in \R^n$, and fix any $t>0$ and $0< t_1<t$. Suppose $c\in \mathscr{C}^*$ is an optimal velocity field for $u_\eps(t,x)$. By the control representation, we have
\begin{equation*}
\begin{aligned}
    u_\eps(t,x)-u_\eps(t_1,x) &\le \mathbb{E}^{Q^c_{x/\eps}}\left[u_0(\eps X_{t/\eps}) - u_0(\eps X_{t_1/\eps}) - \eps \int_{t_1/\eps}^{t/\eps} L(c(X_s),\tau_{X_s}
    \omega)\,\mathrm{d}s\right]\\
    &\le \mathbb{E}^{Q^c_{x/\eps}}\left[ K\eps |X_{t/\eps}-X_{t_1/\eps}|\right] + c_4(t-t_1),
    \end{aligned}
\end{equation*}
where we also used \eqref{BLip} and \eqref{A2L}. By the equation \eqref{SDElevy} satisfied by $X_\bullet$, we have
\begin{equation*}
    \eps \mathbb{E}^{Q^c_{x/\eps}} |X_{t/\eps}-X_{t_1/\eps}| \le \mathbb{E}^{Q^c_{x/\eps}} \eps \int_{t_1/\eps}^{t/\eps} |c(s,X_s)|\,\mathrm{d}s + C\eps^{1-\frac1\alpha}(t-t_1). 
\end{equation*}
Since we may assume that $c\in \mathscr{C}^*$, by \eqref{sCset} we deduce that
\begin{equation*}
    u_\eps(t,x)-u_\eps(t_1,x) \le C(t-t_1)^{\frac1\beta}(t + \epsilon^{1-\frac{1}{\alpha}} t^{\frac{1}{\alpha}})^{\frac{1}{\beta'}} + C\eps^{1-\frac1\alpha}(t-t_1).
\end{equation*}

Note that by the dynamic programming principle (DPP),
\begin{equation*}
    u_\eps(t,x,\omega) = \sup_{c\in \mathscr{C}} \mathbb{E}^{Q^c_{x/\eps}} \left[ u_\eps(t_1,\eps X_{(t-t_1)/\eps}) - \eps\int_0^{(t-t_1)/\eps} L(c(X_s),\tau_{X_s}\omega)\,\mathrm{d}s\right].
\end{equation*}
Suppose $c_1\in \mathscr{C}$ is an optimal velocity field for $u_\eps(t_1,x)$, we may extend to some $\hat{c}_1\in \mathscr{C}$ such that $\hat c_1 = c_1$ for $0\le s\le t_1$. By the DPP above, we get
\begin{equation*}
    u_\eps(t,x)-u_\eps(t_1,x) \ge \mathbb{E}^{Q^{\hat{c}_1}_{x/\eps}} \left[u_\eps(t_1,\eps X_{(t-t_1)/\eps}) - u_\eps(t_1,x) - \eps\int_0^{(t-t_1)/\eps} L(\hat{c}_1(X_s),\tau_{X_s}\omega)\,\mathrm{d}s\right].
    \end{equation*}
Using the regularity of $u_\eps$ in the space variable and setting $h=t-t_1$, we can bound the right hand side above from below by
    \begin{equation*}
    -c_2(t-t_1) - C\mathbb{E}^{Q^{\hat{c}_1}_{x/\eps}}\left(|\eps X_{h/\eps}-x| +\eps^{1-\frac1\alpha} |\eps X_{h/\eps}-x|^{\frac1\alpha} + \eps^{(1-\frac1\alpha)\frac{1}{\beta'}} |\eps X_{h/\eps}-x|^{1-(1-\frac1\alpha)\frac{1}{\beta'}}\right).
\end{equation*}
Using the control of $\eps X_\bullet -x$ as before, and \eqref{sCset} again, the above is further bounded
\begin{equation*}
-C(t-t_1)-C\eps^{1-\frac1\alpha}(t-t_1)^{\frac1\alpha}
\end{equation*}
for some universal constant $C<\infty$.
\end{proof}
%%%%%%%%

\subsection{Lower bound of the homogenization error}

In view of the representation formulas, namely, \eqref{vf} for $u_\eps$ and \eqref{eq:ubarformula} for $\overline{u}$, we plan to use the ergodic theorem as the starting point to find a lower bound of $u_\eps-\overline{u}$. %From the representation formula \eqref{eq:HopfLax}, for any $t>0$, we have
%\begin{equation}
%\label{eq:ubarcontrol}
%\begin{aligned}
%    \overline{u}(t,0) &= \sup_{y\in \R^n} \left(u_0(y)-\inf_{(b,\rho)\in \mathscr{E}\,:\,\mathbf{E}(b(\omega)\rho)(\omega))=y/t} \,t\mathbf{E}[L(b(\omega),\omega)\rho(\omega)]\right)\\
%    &= \sup_{y\in \R^n} \sup_{(b,\rho)\in \mathscr{E}\,:\,\mathbf{E}(b(\omega)\rho(\omega))=y/t} \left(u_0(y)-t\mathbf{E}[L(b(\omega),\omega)\rho(\omega)]\right)\\
%&=\sup_{(b,\rho)\in\mathscr{E}} \left(u_0(t\mathbf{E}[b(\omega)\rho(\omega)] - t\mathbf{E}[L(b(\omega),\omega)\rho(\omega)]\right)
%   \end{aligned}
%\end{equation}

We have seen that if $(b,\rho) \in \mathscr{E}$, then $\rho\,\mathrm{d}\mathbf{P}$ is an invariant ergodic measure on $\Omega$ for the environmental process $\theta_t$ (more precisely, for the corresponding Markov semigroup with generator $\mathfrak{A}_b$). The ergodic theorem then yields
%the For every $(b,\rho) \in \mathscr{E}$, we set
%\begin{equation*}
%    m(b,\rho) = \int b(\omega)\rho(\omega)\mathrm{d}\mathbf{P},
%\end{equation*}
%\begin{equation*}
%    h(b,\rho) = \int L(b(\omega),\omega)\rho(\omega)\mathrm{d}\mathbf{P}.
%\end{equation*}
%Then the ergodic result (Lemma \ref{ErgodicL}) yields that for almost all $\omega\in\Omega$ with respect to $\bP$ and the diffusion process $X_{t}$ with induced measure $Q^{c}_{x}$, we have
\begin{equation*}
%\label{eq:Qcergodic}
\begin{aligned}
    &\lim_{\epsilon\rightarrow 0} \epsilon \int^{t/\epsilon}_{0} b(\tau_{X_{s}}\omega) \mathrm{d}s = t\int_\Omega b(\omega)\rho(\omega)\mathrm{d}\bP = t\mathbf{E}[b(\omega)\rho(\omega)],\\% = m(b,\rho)t,
%\end{equation*}
%\begin{equation*}
    &\lim_{\epsilon\rightarrow 0} \epsilon \int^{t/\epsilon}_{0} L(b(\tau_{X_{s}}\omega),\omega(s))\mathrm{d}s = t\int_\Omega L(b(\omega),\omega)\rho(\omega)\mathrm{d}\bP = t\mathbf{E}[L(b(\omega),\omega)\rho(\omega)].%= h(b,\rho)t,
    \end{aligned}
\end{equation*}
The above holds in $L^{1}(Q^{b,\omega}_{0})$ and $Q^{b,\omega}_0$ almost surely in the path space, and $\bP$-a.s.\,for $\omega \in \Omega$. Let $c(x,\omega)=b(\tau_x,\omega)$. %Using the boundedness of $b$ and by the dominated convergence theorem, the above also yields, 
We also have, $\bP$-a.s.\,in $\Omega$,
\begin{equation}
    \label{eq:Qcergodic-1}
\begin{aligned}
     &\lim_{\epsilon\rightarrow 0}\, \mathbb{E}^{Q^{c}_{0}}\left(\left|\epsilon\int^{t/\epsilon}_{0}b(\tau_{X_{s}}\omega)\mathrm{d}s - t\mathbf{E}[b(\omega)\rho(\omega)] \right| \right)= 0,\\
    &\lim_{\epsilon\rightarrow 0}\, \mathbb{E}^{Q^{c}_{0}}\left(\left|\epsilon\int^{t/\epsilon}_{0}L(b(\tau_{X_{s}}\omega),\tau_{X_{s}}\omega)\mathrm{d}s - t\mathbf{E}[L(b(\omega),\omega)\rho(\omega)] \right| \right) =  0.
\end{aligned}
\end{equation}
Moreover, all of those convergence results are uniform in $t$ in any finite time interval $[0,T]$.
By Egoroff's theorem, for every $\eta >0$, there exists $N_{\eta} \in\mathscr{F}$ with $\bP(N_{\eta})\geq 1-\eta$, such that
\begin{align}\label{ELE1}
     \lim_{\epsilon\rightarrow 0}\sup_{\omega\in N_{\eta}}\sup_{0\leq t \leq T}\mathbb{E}^{Q^{c}_{0}}\left(\left|\epsilon\int^{t/\epsilon}_{0}b(\tau_{X_{s}}\omega)\mathrm{d}s - t\mathbf{E}[b(\omega)\rho(\omega)] \right| \right)= 0,
\end{align}
and 
\begin{align}\label{ELE2}
    \lim_{\epsilon\rightarrow 0}\sup_{\omega\in N_{\eta}}\sup_{0\leq t \leq T}\mathbb{E}^{Q^{c}_{0}}\left(\left|\epsilon\int^{t/\epsilon}_{0}L(b(\tau_{X_{s}}\omega),\tau_{X_{s}}\omega)\mathrm{d}s - t\mathbf{E}[L(b(\omega),\omega)\rho(\omega)]\right| \right) =  0.
\end{align}

\begin{lem}\label{Con3}
Suppose that Assumptions {\upshape({\bf A1})} and {\upshape({\bf A3$'$})} hold. Then there is an event $\Omega_0 \subset \Omega$ with $\mathbf{P}(\Omega_0) = 1$ such that, for all $T>0$,
\begin{equation}\label{Eu00}
    \liminf_{\epsilon\rightarrow 0} %\inf_{\omega\in N_{\eta}}
    \inf_{0\leq t \leq T} \,[u_{\epsilon}(t,0,\omega) - \overline{u}(t,0)] \ge 0, \qquad \forall \omega \in \Omega_0.
\end{equation}
\end{lem}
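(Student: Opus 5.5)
The plan is to plug a stationary feedback control into the representation formula \eqref{vf} and use the ergodic limits \eqref{eq:Qcergodic-1} to compare with \eqref{eq:ubarformula}.

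\emph{Step 1: reduction to countably many pairs.} Fix $(b,\rho)\in\mathscr{E}$. Since $b$ is only in $L^\infty$, first assume $b\in\mathscr{U}^n$, so that $c(x)=b(\tau_x\omega)$ is bounded and Lipschitz and hence lies in $\mathscr{C}$. I would justify this by noting that in \eqref{EffH} the supremum can be restricted to a countable family $\{(b_k,\rho_k)\}\subset\mathscr{E}$ with $b_k$ regular. To do so, approximate via the mollification of Lemma \ref{Mollify} applied to $b\rho$ and $\rho$, and use the divergence structure of the Fokker--Planck relation, which is linear in $(b\rho,\rho)$ and commutes with mollification. Density, together with the continuity of $u_0$ and of $L$ in $q$ and the growth bounds \eqref{A2L}, then gives approximation of the value $u_0(x+t\mathbf{E}[b\rho])-t\mathbf{E}[L(b,\omega)\rho]$. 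Separability of $L^1(\Omega)$, which follows since $\Omega$ is Polish, gives the countable family.

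\emph{Step 2: a lower bound for each pair.} For each such pair, let $\Omega_k$ be the full-measure set on which \eqref{eq:Qcergodic-1} holds uniformly in $t\in[0,T]$, for every integer $T$. Take $\Omega_0=\cap_k\Omega_k$. For $\omega\in\Omega_0$, the control $c$ in \eqref{vf} at $x=0$ gives
\[
u_\eps(t,0,\omega)\ge \mathbb{E}^{Q^c_0}\Big[u_0(\eps X_{t/\eps})-\eps\!\int_0^{t/\eps}\!L(b(\tau_{X_s}\omega),\tau_{X_s}\omega)\,\dd s\Big].
\]
Write $\eps X_{t/\eps}=\eps\int_0^{t/\eps}b(\tau_{X_s}\omega)\dd s+\eps L^\alpha_{t/\eps}$. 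The noise term has $\mathbb{E}|\eps L^\alpha_{t/\eps}|\le C\eps^{1-1/\alpha}t^{1/\alpha}\to0$ by \eqref{eq:Ltass}, uniformly for $t\le T$. Using the Lipschitz bound \eqref{BLip} and \eqref{eq:Qcergodic-1}, the right-hand side converges uniformly in $t\in[0,T]$ to $u_0(t\mathbf{E}[b\rho])-t\mathbf{E}[L(b,\omega)\rho]$. Hence
\[
\liminf_{\eps\to0}\inf_{t\le T}\big[u_\eps(t,0,\omega)-(u_0(t\mathbf{E}[b_k\rho_k])-t\mathbf{E}[L(b_k,\omega)\rho_k])\big]\ge0 .
\]

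\emph{Step 3: uniformity over $(b,\rho)$.} Taking the supremum over $k$ pointwise in $t$ gives $\overline u(t,0)$, but I need a $\liminf$ of an $\inf_t$. Here I would use equicontinuity. By Theorem \ref{thm:regularity}, the functions $u_\eps(\cdot,0,\omega)$ are equicontinuous on $[0,T]$, uniformly in $\eps$ and $\omega$, and $\overline u(\cdot,0)$ is continuous. For $\delta>0$, choose a finite grid $\{t_j\}$ with mesh small relative to these moduli. At each $t_j$ pick finitely many $k$ achieving $\overline u(t_j,0)-\delta$. Applying Step 2 to this finite collection, and interpolating between grid points via the equicontinuity, yields $\liminf_\eps\inf_{t\le T}[u_\eps-\overline u]\ge-C\delta$; then let $\delta\to0$. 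Taking $T$ over the integers finishes.

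The main obstacle is Step 1: showing that pairs with rough $b$ can be approximated inside $\mathscr{E}$ by pairs with regular $b$ and $\rho$ bounded below, without losing the value of $\mathbf{E}[L(b,\omega)\rho]$. If mollification causes trouble with positivity or the lower bound on $\rho$, I would mix with the constant pair, replacing $(b,\rho)$ by the convex combination with $(v,1)$ as in Proposition \ref{prop:propolL}. Convexity of $L$ controls the loss, and the mollified density stays bounded below.
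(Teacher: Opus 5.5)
Your proof is correct. Its core (Step 2) is exactly the paper's argument: insert the stationary feedback $c(x)=b(\tau_x\omega)$ into \eqref{vf}, write $\eps X_{t/\eps}$ as the ergodic average of $b$ plus $\eps L^\alpha_{t/\eps}$, and conclude with \eqref{eq:Qcergodic-1}, \eqref{eq:Ltass} and the Lipschitz bound on $u_0$.

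The paper, however, does this for one pair $(b,\rho)$, chosen $\eta$-optimal at a given $t$ in \eqref{eq:uepst0-1}. It then simply asserts the conclusion uniformly in $t$ and $\mathbf{P}$-a.s. It does not address three points. First, the ergodic null set depends on the pair, hence on $t$ and $\eta$. Second, a pair that is near-optimal at one time need not be near-optimal at others. Third, for $b$ merely in $L^\infty$, the field $b(\tau_x\omega)$ need not be Lipschitz, so the drift need not lie in $\mathscr{C}$. Your Steps 1 and 3 repair exactly these points. They yield one event $\Omega_0$ independent of $t$, $T$ and $\eta$, at the price of an approximation argument the paper does not contain.

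Two comments on that argument.
\begin{itemize}
\item Mixing with $(\mathbf{E}[b\rho],1)$ is necessary, not a fallback. A mollified density need not be bounded below, and without a lower bound $m_\delta/\rho_\delta$ (with $m=b\rho$) need not be Lipschitz along orbits.
\item The limit $\mathbf{E}[L(m_\delta/\rho_\delta,\omega)\rho_\delta]\to\mathbf{E}[L(b,\omega)\rho]$ follows by dominated convergence along a.e.-convergent subsequences. The ingredients are $L^1$ convergence of $(m_\delta,\rho_\delta)$, the bound $|m_\delta|\le\|b\|_\infty\rho_\delta$, continuity of $L$ in $q$, and \eqref{A2L}. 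So \textbf{(A2)} is not needed, consistent with the lemma's hypotheses.
\end{itemize}

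The countable family is most simply indexed by $q\in\mathbb{Q}^n$ and $j\in\mathbb{N}$: take a regular pair with mean $q$ and cost within $1/j$ of $\overline{L}(q)$, and use continuity of the finite convex function $\overline{L}$. Finally, with your equicontinuity grid in Step 3, Step 2 is only needed at fixed rational times. So the uniform-in-$t$ claim stated after \eqref{eq:Qcergodic-1} is not even required.
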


\begin{proof} In view of the formula \eqref{eq:ubarformula} with $x=0$, for any $\eta >0$, there is pair $(b,\rho) \in \mathscr{E}$ such that
\begin{equation}
\label{eq:uepst0-1}
u_0(t\mathbf{E}[b(\omega)\rho(\omega)]) - t\mathbf{E}[L(b(\omega),\omega)\rho(\omega)] \ge \overline{u}(t,0) -\eta. 
\end{equation}
For each $\omega \in \Omega$, consider the time-homogeneous drift field $c(x,\omega) = b(\tau_x \omega)$. Then $c\in \mathscr{C}$, and by the variational formula (\ref{vf}), we have
\begin{equation*}
    u_{\epsilon}(t,0,\omega) \geq \mathbb{E}^{Q^{c}_{0}}\left( u_{0}(\epsilon X_{t/\epsilon}) - \epsilon\int^{t/\epsilon}_0 L(c(s,X_s),\tau_{X_{s}}\omega)\mathrm{d}s. \right)
\end{equation*}
Combining those inequalities above, we get 
\begin{equation}
\label{eq:errort0-1}
\begin{aligned}
     u_\eps(t,0,\omega)-\overline{u}(t,0) &\ge \mathbb{E}^{Q^c_0}\left(t\mathbf{E}[L(b(\omega),\omega)\rho(\omega)] - \eps\int_0^{t/\eps} L(b(\tau_{X_s}\omega),\tau_{X_s}\omega)\,\mathrm{d}s\right)\\
         &\quad
      + \mathbb{E}^{Q^c_0} \left(u_0(\eps X_{t/\eps}) - u_0(t\mathbf{E}[b\rho]\right) - \eta.
     \end{aligned}
\end{equation}
By \eqref{eq:Qcergodic-1} the first item above vanishes as $\eps\to 0$, $\mathbf{P}$-a.s., and uniformly for $t\in [0,T]$. For the second term, we note that the jump diffusion process $X_s$ in the above expression satisfies
\begin{equation*}
    \epsilon X_{t/\epsilon} - t\mathbf{E}[b\rho] = \left(\eps\int^{t/\eps}_{0}b(\tau_{X_s}\omega)\mathrm{d}s - t\mathbf{E}[b\rho]\right) + \eps L^{\alpha}_{t/\eps}.
\end{equation*}
In view of \eqref{eq:Ltass} and the fact that $\alpha >1$, the diffusion term $\epsilon L^{\alpha}_{t/\epsilon}$ vanishes to zero as $\eps\to 0$. Using the Lipschitz bound \eqref{BLip} of $u_0$, and \eqref{eq:Qcergodic-1} again, we see that the second term in \eqref{eq:uepst0-1} vanishes also as $\eps\to 0$, $\mathbf{P}$-a.s., and uniformly for $t\in [0,T]$. This completes the proof.
\end{proof}

Next, we need to transfer the lower bound for the homogenization error at $x=0$ to a lower bound that is locally uniform for all spatial points. There is a standard trick usually attributed to Varadhan, which we carry out carefully below. 

First, we use the translation property to transfer the $\mathbf{P}$-a.s.\,point-wise lower bound from $x=0$ to any point $x\in \R^n$.

\begin{cor}\label{cor:pwlbdd}
Suppose that Assumptions {\upshape({\bf A1})} and {\upshape({\bf A3$'$})} hold. Let $T>0$. Then for each fixed $x\in \R^n$, we can find a set $\Omega_x\in \mathscr{F}$ with $\mathbf{P}(\Omega_x) = 1$, so that
\begin{equation}\label{lem:lbddx}
    \liminf_{\epsilon\rightarrow 0} %\inf_{\omega\in N_{\eta}}
    \inf_{0\leq t \leq T} \,[u_{\epsilon}(t,x,\omega) - \overline{u}(t,x)] \ge 0, \qquad \forall \omega \in \Omega_x.
\end{equation}
\end{cor}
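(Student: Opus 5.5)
The idea is to reduce the statement at $x$ to Lemma \ref{Con3} (the statement at $0$), applied with a shifted initial datum, and then transfer the conclusion through the translation identity \eqref{Trans} together with the measure-preserving property of $\tau$.

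Fix $x\in\R^n$ and $\eps\in(0,1)$. By the scaling formula \eqref{eq:uepsscale} and the translation identity \eqref{Trans} with shift $y=x/\eps$,
\begin{equation*}
u_\eps(t,x,\omega) = \eps\big(S(\tfrac t\eps,\tau_{x/\eps}\omega)\,g_\eps^{x/\eps}\big)(0),\qquad g_\eps(z)=\eps^{-1}u_0(\eps z).
\end{equation*}
Here $g_\eps^{x/\eps}(z)=\eps^{-1}u_0^x(\eps z)$, where $u_0^x(\cdot)=u_0(\cdot+x)$. Hence
\begin{equation*}
u_\eps(t,x,\omega)=u^{x}_\eps(t,0,\tau_{x/\eps}\omega),
\end{equation*}
where $u^x_\eps$ solves \eqref{HJB} with initial datum $u_0^x$. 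The shifted datum $u_0^x$ satisfies ({\bf A3$'$}) with the same constants. Likewise, the Hopf–Lax formula \eqref{eq:HopfLax} gives $\overline u(t,x)=\overline u^x(t,0)$, where $\overline u^x$ is the homogenized solution with datum $u_0^x$.

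The difficulty is that the environment $\tau_{x/\eps}\omega$ moves with $\eps$, so Lemma \ref{Con3} (which controls a fixed $\omega$) cannot be applied directly. My plan is to use a quantitative version of its proof. Fix $\eta>0$ and choose $(b,\rho)\in\mathscr E$ as in \eqref{eq:uepst0-1}. Take the set $N_\eta$ from Egoroff, \eqref{ELE1}–\eqref{ELE2}, on which the ergodic convergence is uniform in $\omega\in N_\eta$ and $t\le T$. Running the argument of Lemma \ref{Con3} with datum $u_0^x$ then shows: there is $\eps_0$ such that for $\eps<\eps_0$ and every $\omega'\in N_\eta$,
\begin{equation*}
\inf_{t\le T}\big[u_\eps^x(t,0,\omega')-\overline u^x(t,0)\big]\ge -C\eta.
\end{equation*}
Thus we need $\tau_{x/\eps}\omega\in N_\eta$ along $\eps\to0$.

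To arrange this, consider the sequence $\eps_k=2^{-k}$. Measure preservation gives $\mathbf P(\tau_{x/\eps_k}\omega\notin N_\eta)\le\eta$, but that bound is not summable, so Borel–Cantelli does not apply directly. I would therefore choose $\eta_j=2^{-j}$ and, for each $j$, the corresponding sets $N_{\eta_j}$ and thresholds. For the spatial point $x$, the events $\{\tau_{x/\eps}\omega\in N_{\eta}\}$ have probability at least $1-\eta$ for every $\eps$. Using Fatou's lemma, the set of $\omega$ with $\liminf_\eps$ of the error $\ge -C\eta$ has probability at least $1-\eta$, and a countable intersection over $\eta_j$ then gives the full-measure statement. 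More precisely, for each $j$ we get $\mathbf P\{\liminf_{\eps}\inf_t[u_\eps(t,x)-\overline u(t,x)]\ge -C\eta_j\}\ge 1-\eta_j$.

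That probability bound alone does not yield probability one, so a second ingredient is needed. The event $\{\liminf_\eps\inf_t[\cdots]\ge -C\eta\}$ is monotone in $\eta$, hence its probability is at least $\sup_{\eta'\le \eta}(1-\eta')=1$. Concretely, $A_\eta:=\{\liminf\ge -C\eta\}\supset A_{\eta'}$ for $\eta'<\eta$, so $\mathbf P(A_\eta)\ge 1-\eta'$ for every $\eta'<\eta$, hence $\mathbf P(A_\eta)=1$. Intersecting over $\eta=\eta_j$ then gives $\Omega_x$. The step I consider most delicate is the justification of $\mathbf P\{\tau_{x/\eps}\omega\in N_\eta \text{ for infinitely many small } \eps\}\ge 1-\eta$ needed for the liminf, since the $\liminf$ is over a continuum of $\eps$. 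I would handle this by working with $\limsup_\eps \mathbf 1_{N_\eta^c}(\tau_{x/\eps}\omega)$ and reverse Fatou. If that fails, I would fall back on the uniform-in-$\omega$ equicontinuity from Theorem \ref{thm:regularity} to restrict to a countable family of $\eps$.
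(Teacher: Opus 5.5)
You are right that the correct identity is $u_\eps(t,x,\omega)=u^x_\eps(t,0,\tau_{x/\eps}\omega)$, with an $\eps$-dependent shift. The paper's one-line proof writes $\tau_x\omega$ and then invokes Lemma \ref{Con3} with measure preservation as if the environment were fixed, so it never confronts the issue you raise. However, your way of handling the moving environment has a genuine gap. To get $\mathbf{P}(A_\eta)\ge 1-\eta$ you need $\tau_{x/\eps}\omega\in N_\eta$ for \emph{all} sufficiently small $\eps$, i.e.\ a lower bound on $\mathbf{P}(\liminf_{\eps\to0}E_\eps)$ with $E_\eps:=\{\tau_{x/\eps}\omega\in N_\eta\}$. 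Fatou's lemma gives $\mathbf{P}(\liminf_\eps E_\eps)\le\liminf_\eps\mathbf{P}(E_\eps)$, which is the wrong direction. Reverse Fatou gives $\mathbf{P}(\limsup_\eps E_\eps)\ge1-\eta$, i.e.\ membership in $N_\eta$ only along some $\omega$-dependent sequence $\eps_k\to0$; that bounds $\limsup_\eps$ of the error from below, not its $\liminf$. The event you need is in fact typically null: if the flow $s\mapsto\tau_{sx}$ is ergodic and $\mathbf{P}(N_\eta^c)>0$, almost every orbit $\tau_{x/\eps}\omega$ re-enters $N_\eta^c$ for arbitrarily small $\eps$. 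Measure preservation plus Lemma \ref{Con3} only shows that the negative part of $\inf_{t\le T}[u_\eps(t,x,\omega)-\overline u(t,x)]$ tends to $0$ in probability. Restricting to countably many $\eps$ does not help. Along a fixed sequence you would need summable bad probabilities, which forces a sparse, $\eta$-dependent sequence, and Theorem \ref{thm:regularity} gives no continuity in $\eps$ to fill the gaps. Your monotonicity-in-$\eta$ step is logically fine but inherits this gap.

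The missing idea is Varadhan's trick: use the freedom in \emph{space} rather than in $\eps$. Fix $r>0$ and $R\ge|x|+r$. Apply the multiparameter ergodic theorem to $\mathbf{1}_{N_\eta^c}$ on the balls $B_{R/\eps}(0)$. Almost surely, for all small $\eps$, the set of $y\in B_{R/\eps}(0)$ with $\tau_y\omega\notin N_\eta$ has measure at most $2\eta|B_{R/\eps}|$. So if $2\eta R^n<r^n$, the ball $B_{r/\eps}(x/\eps)$ contains some $y$ with $\tau_y\omega\in N_\eta$. For $x'=\eps y\in B_r(x)$ one has $u_\eps(t,x',\omega)=u^{x'}_\eps(t,0,\tau_y\omega)$. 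The argument of Lemma \ref{Con3} on $N_\eta$ then gives $u_\eps(t,x',\omega)-\overline u(t,x')\ge-\eta-o(1)$, uniformly in $t\le T$ and $|x'|\le R$, because the ergodic error terms do not depend on the shift of the initial datum. This needs finitely many pairs $(b,\rho)$ covering $(t,z)\in[0,T]\times\overline{B}_R$, with $N_\eta$ the intersection of the corresponding Egoroff sets. (The pair in \eqref{eq:uepst0-1} depends on $t$, which your sketch also glosses over.) Then \eqref{eq:regx} and the Lipschitz bound for $\overline u$ move the estimate from $x'$ to $x$ at a cost $O\bigl(r+\eps^{1-\frac1\alpha}r^{\frac1\alpha}+\eps^{(1-\frac1\alpha)\frac1{\beta'}}r^{1-(1-\frac1\alpha)\frac1{\beta'}}\bigr)$. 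Sending $\eps\to0$ and then $\eta,r\to0$ along countable sequences gives the corollary, and in fact directly the locally uniform bound of Theorem \ref{LBR}.
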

\begin{proof} We have seen that for fixed $x\in \R^n$, $u_\eps(t,x,\omega) = u_\eps^x(t,0,\tau_x \omega)$. Here, $u_\eps^x(t,\cdot) = S_\eps(t;\tau_x\omega) T_x u_0$, that is the solution to the oscillatory HJB equation \eqref{HJB} with environment $\tau_x \omega$ and with initial data $T_x u_0 = u_0(\cdot + x)$. We have also seen that $\overline{u}(t,x) = \overline{u}^x(t,0)$, where $\overline{u}^x(t,\cdot) = \overline{S}(t)T_x u_0$ is the solution to the HJ equation \eqref{hHJB} with initial data $T_x u_0$. Since $T_xu_0$ enjoys the same Lipschitz bound condition ({\bf A3$'$}) as $u_0$, and because $\tau_x$ preserves $\mathbf{P}$ on $\Omega$, the same proof of Lemma \ref{Con3} works for $u_\eps^x$ and $\overline{u}^x$. The desired result follows. 
\end{proof}

%To complete the argument we use the oscillation control of $u_\eps$, and the ergodicity of $\{\tau_y\}_{y\in \R^n}$.

\begin{thm}\label{LBR}
Assume that assumptions {\upshape ({\bf A1})}, {\upshape ({\bf A2})}, and {\upshape({\bf A3$'$})} hold. Let $u_{\epsilon}(t,x,\omega)$ be the solution of the {\upshape HJB}  equation (\ref{HJB}), and $\overline{u}(t,x)$ be the solution of the homogenized {\upshape HJ} equation (\ref{hHJB}). Then for every $R,T>0$, we can find $\widetilde{\Omega} \subset \Omega$ with $\mathbf{P}(\widetilde\Omega) = 1$, such that for all $\omega \in \widetilde\Omega$, 
\begin{equation}\label{eq:uflbdd}
    \liminf_{\epsilon\rightarrow 0}\inf_{t\in[0,T]}\inf_{|x|\leq R}(u_{\epsilon}(t,x,\omega) - \overline{u}(t,x)) \geq 0 .
\end{equation}
\end{thm}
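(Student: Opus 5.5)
The plan is to upgrade the pointwise-in-$x$ almost sure lower bound of Corollary \ref{cor:pwlbdd} to a bound that is locally uniform in $x$. This is the standard Varadhan trick: use the equicontinuity from Theorem \ref{thm:regularity} together with stationarity and the ergodic theorem. Only countably many points can be handled directly: intersecting $\Omega_x$ over a countable dense set of $x$ gives a full-measure set. Equicontinuity in $x$ of $u_\eps$ holds only up to terms vanishing as $\eps\to0$, and the limit $\overline u$ is Lipschitz, so this already suffices. It is cleaner, however, to run the argument through the ergodic theorem, following \cite{KRV06}, and that is what I will do.

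\textbf{Step 1 (Egorov at the origin).} Fix $\delta>0$. By Lemma \ref{Con3} and Egorov's theorem there are a set $E_\delta$ with $\mathbf P(E_\delta)>1-\delta$ and $\eps_0>0$ such that
\[
u_\eps^{g}(t,0,\omega)-\overline u^{g}(t,0)\ge-\delta
\]
for all $\eps<\eps_0$, $t\le T$ and $\omega\in E_\delta$. The difficulty is that the initial datum changes with the point, since $T_{x}u_0=u_0(\cdot+x)$. To handle this, I will first reduce to finitely many initial data. The family $\{T_{z}u_0 : |z|\le R\}$ is compact in sup norm because $u_0$ is Lipschitz, so it is covered by finitely many data $T_{z_k}u_0$ within distance $\delta$. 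By Remark \ref{rem:BLip}, the contraction property in the initial data then costs at most $2\delta$.

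\textbf{Step 2 (ergodic theorem).} Let $y=x/\eps$. By \eqref{Trans} and the scaling \eqref{eq:uepsscale}, $u_\eps(t,x,\omega)$ is the solution at the spatial origin with environment $\tau_{x/\eps}\omega$ and datum $T_xu_0$. The multiparameter ergodic theorem applied to $\mathbf 1_{E_\delta}$ gives the following for a.e. $\omega$: for $\eps$ small, the set $\{y\in B_{R/\eps}:\tau_y\omega\in E_\delta\}$ occupies a fraction greater than $1-2\delta$ of $B_{R/\eps}$. Consequently, every $x\in B_R$ lies within distance $C\delta^{1/n}R$ of some $x'$ with $\tau_{x'/\eps}\omega\in E_\delta$. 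This $x'$ is within $\eps$-scaled distance $C\delta^{1/n}R/\eps$ in the microscopic variable, which is the correct scale.

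\textbf{Step 3 (transfer from $x'$ to $x$).} At $x'$ I have $u_\eps(t,x',\omega)\ge\overline u(t,x')-C\delta$, after using the finite net of initial data. Then \eqref{eq:regx} gives
\[
|u_\eps(t,x)-u_\eps(t,x')|\le C\bigl(|x-x'|+o_\eps(1)\bigr),
\]
and $\overline u$ is Lipschitz in $x$ uniformly in $t\le T$. Combining these, $\liminf\inf(u_\eps-\overline u)\ge -C(\delta+\delta^{1/n}R)$. Letting $\delta\downarrow0$ along a countable sequence and intersecting the resulting full-measure sets yields $\widetilde\Omega$.

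The main obstacle is Step 2, the interplay between the randomness of the good set and the point-dependent initial data. In Lemma \ref{Con3} the event $E_\delta$ depends on the datum, so I need uniformity over a finite net of data, which is exactly why the compactness reduction in Step 1 is done first. Assumption (\textbf{A2}) is not needed beyond what enters the earlier results.
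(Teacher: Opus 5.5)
Your argument is correct, but it follows a different route from the paper's.

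\textbf{The paper's route.} The paper applies Corollary \ref{cor:pwlbdd} at every $x\in\mathbb{Q}^n$ and intersects the resulting full-measure sets. It then passes from the finite grid $B_{R+1}\cap 2^{-k}\mathbb{Z}^n$ to all of $B_R$ using \eqref{eq:regx} and the Lipschitz continuity of $\overline u$. At this stage it uses neither a spatial ergodic theorem nor uniformity over initial data.

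\textbf{Your route.} You use the almost sure statement of Lemma \ref{Con3} only at the origin. You make it uniform on an event $E_\delta$ of large probability, simultaneously for a finite sup-norm net of the translated data $T_zu_0$, $|z|\le R$. That net exists because $z\mapsto T_zu_0$ is Lipschitz into $L^\infty$, and the contraction costs $2\delta$ for both $u_\eps$ and $\overline u$. You then apply the multiparameter ergodic theorem and a volume-covering argument to find, for every $x\in B_R$, some $x'\in B_R$ with $|x-x'|\le C\delta^{1/n}R$ and $\tau_{x'/\eps}\omega\in E_\delta$. The $\eps$-dependent terms in \eqref{eq:regx} vanish uniformly since $|x-x'|\le 2R$.

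\textbf{What each route buys.} The paper's route is shorter, but it depends entirely on Corollary \ref{cor:pwlbdd}. That corollary's proof writes $u_\eps(t,x,\omega)=u^x_\eps(t,0,\tau_x\omega)$. By \eqref{Trans} and \eqref{eq:uepsscale}, the correct identity is
\begin{equation*}
u_\eps(t,x,\omega)=u_\eps^{T_xu_0}(t,0,\tau_{x/\eps}\omega),
\end{equation*}
which you state correctly in Step 2. Here the environment moves with $\eps$. Measure preservation alone transfers the conclusion of Lemma \ref{Con3} along $\tau_{x/\eps}\omega$ only in probability, not almost surely. Your Steps 1--2 are exactly the mechanism from \cite{KRV06} that handles this $\eps$-dependent shift. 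The finite net of data is needed precisely because the good event at the origin depends on the datum. As a by-product, your argument also proves Corollary \ref{cor:pwlbdd}.

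So your opening remark that the countable-dense-set argument ``already suffices'' should be qualified: it is only as good as that corollary. Your ergodic-theorem route is the more robust one, not merely the cleaner one.

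The only point you leave implicit is the measurability of $\omega\mapsto\inf_{\eps<1/m}\inf_{t\le T}\bigl(u^{g}_\eps(t,0,\omega)-\overline u^{g}(t,0)\bigr)$ needed in the Egorov step. This is at the same level of rigor as the paper's own use of Egorov.
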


\begin{proof} First, apply Corollary \ref{cor:pwlbdd} to each $x\in \mathbb{Q}^n$ (that is, all $n$ coordinates of $x$ belong to $\mathbb{Q}$), and let $\widetilde\Omega$ be the intersection of all $\Omega_x$ found for $x \in \mathbb{Q}^n$. Then $\mathbf{P}(\widetilde\Omega) = 1$, and we have
\begin{equation}\label{lem:lbddrational}
    \liminf_{\epsilon\rightarrow 0} 
    \inf_{0\leq t \leq T} \,[u_{\epsilon}(t,x,\omega) - \overline{u}(t,x)] \ge 0, \qquad \forall x\in \mathbb{Q}^n,\, \forall \omega \in \widetilde\Omega.
\end{equation}

To strengthen this pointwise lower bound to the locally uniform one in \eqref{eq:uflbdd}, we rely on the regularity results in Theorem \ref{thm:regularity}. Fix $\omega \in \widetilde\Omega$, and then fix $T,R>0$. In view of \eqref{eq:regx} for $u_\eps$ and the fact that the solution $\overline{u}$ of the homogenized HJ equation \eqref{hHJB} enjoys Lipschitz regularity (see Theorem 3.18 of \cite{MR4328923}), for any $\eta > 0$, there is an integer $k\in \mathbb{N}$ sufficiently large so that, for all $x\in \R^n$, we can find $x'\in 2^{-k} \mathbb{Z}^n$, such that
\begin{equation*}
    \sup_{t\in [0,T]} |u_\eps(t,x,\omega) - u_\eps(t,x',\omega)| + \sup_{t\in[0,T]} |\overline{u}(t,x)-\overline{u}(t,x')| < \eta.
\end{equation*}
Moreover, for $|x|\le R$, $x'$ can be chosen from the set $E_k := B_{R+1}(0)\cap 2^{-k}\mathbb{Z}^n$. Hence,
\begin{equation*}
    \inf_{t\in [0,T]} \inf_{|x|\le R} \left(u_\eps(t,x,\omega) - \overline{u}(t,x)\right) \ge - \eta + \inf_{x'\in E_k} \inf_{t\in[0,T]} \left(u_\eps(t,x',\omega)-\overline{u}(t,x')\right) .
\end{equation*}
Since $E_k$ is a finite set, passing $\eps \to 0$ in the inequality above, we get
\begin{equation*}
    \liminf_{\eps \to 0} \inf_{t\in [0,T]} \inf_{|x|\le R} \left(u_\eps(t,x,\omega) - \overline{u}(t,x)\right) \ge -\eta.
\end{equation*}
This immediately establishes \eqref{eq:uflbdd}.
\end{proof}

\section{Approximate Super Solutions}
\label{sec:convex}

The proof of the upper bound that leads to \eqref{eq:main} is more involved. The idea is to construct approximate super-solutions to the HJB equation. For this purpose and following \cite{KRV06}, we construct, for each $p\in \R^n$, an $\mathbb{R}^n$-valued random vector $v \in L^{\beta}(\Omega;\R^n)$ that can be viewed as a gradient, and integrating it yields approximate supersolutions. More precisely, we aim to find $v = v_p$ satisfying  
\begin{equation}
\label{eq:sublifted}
    \mathfrak{J}v(\omega) + H(p + v(\omega), \omega) \leq \overline{H}(p),
\end{equation}
where $\mathfrak{J}$ is the non-local divergence operator defined in \eqref{eq:JJ}. 

To understand the importance of this inequality, consider the random vector field
$$
\widetilde v(x,\omega) = v(\tau_x \omega).
$$
Using the relation $\mathscr{J}\widetilde v(x,\omega) = \mathfrak{J}v (\tau_x\omega)$, the above inequality can be transformed, formally, to
\begin{equation*}%\label{supersol}
    \mathscr{J}\widetilde v(x,\omega) + H(p + \widetilde v(x,\omega), \tau_x\omega) \leq \overline{H}(p).
\end{equation*}
If $\widetilde{v}$ is a regular vector field on $\R^n$ and is curl-free, we can define a potential field $V$ via the line integral
\begin{equation}
    \label{eq:vtoV}
    V(x,\omega) := \int_{0\rightsquigarrow x} \widetilde v(z(s),\omega)\cdot \mathrm{d}z(s),
\end{equation}
where $0\rightsquigarrow$ denotes a smooth simple path connecting $0$ to $z$. Then $V(0,\omega)=0$ and $\nabla V(x,\omega) = \widetilde{v}(x,\omega) = v(\tau_x \omega)$. From the inequality of $\widetilde{v}$ and the relation $\mathscr{L}V=\mathscr{J}\widetilde{v}$ we see, intuitively, $\widehat{u}_{\epsilon}(t,x,\omega):= \langle p, x \rangle + t\overline{H}(p) + \epsilon V(x/\epsilon ,\omega)$ would satisfy
\begin{equation*}
    \partial_{t}\widehat{u}_{\epsilon} \geq \epsilon^{\alpha-1} \mathscr{L}\widehat{u}_{\epsilon} + H(\nabla\widehat{u}_\eps, \tau_{x/\eps}\omega).
\end{equation*}
So in some sense $\widehat{u}_\eps$ is a super-solution of the HJB equation \eqref{HJB} with initial value $u_0(x)=\langle p,x\rangle$, and this super-solution would be used to get the upper bound for the conclusion of \eqref{eq:main} in that case of affine initial data. 

%%%%%%%%
\subsection{Convex analysis} We follow the method in \cite{KRV06} for the desired construction of $v$. The starting point is a convex analysis of the definition of $\ol{H}$ in \eqref{EffH}. 

We first realize the space $\mathscr{E}$ in \eqref{EffH} as the limit of an increasing sequence. Recall that $\Omega$ is separable and, for $r \in [1,\infty)$, $L^r(\Omega)$ is separable. Let $L^1_+(\Omega)$ denote the convex subset of $L^1(\Omega)$ consisting of non-negative functions. Then $L^1_+(\Omega)$ remains separable and there is a sequence $\{\varphi_j\}_{j\in \mathbb{N}^*}$ that is a dense subset of the space. We may assume that $\varphi_j\in \mathscr{U}$ for all $j$ (see Lemma \ref{Mollify}). For each $k,r \in \mathbb{N}^*$, let
\begin{equation*}
    \mathbf{D}_{k}:=\left\{ \textstyle\sum_{j=1}^{k}t_{j}\varphi_{j} \,:\, \sum_{j=1}^{k}t_{j} =1, t_{j}\in [0,1]\right\}
\end{equation*}
be the convex hull of $\{\varphi_{j}: 1\leq j \leq k\}$, and let
\begin{equation*}
\mathbf{B}_{r} = \{b \in \mathbf{B} \,:\, \sup_{\omega}|b(\omega)| \leq r\}
\end{equation*}
where $\mathbf{B}$ is defined in \eqref{eq:vfB}. 

Note that each $\mathbf{D}_{k}$ is strongly compact in $L^{1}(\Omega)$, and each $\mathbf{B}_{r}$ is weak compact in $L^\gamma(\Omega)$ for $1 \leq \gamma < \infty$. Define
\begin{align*}
    & \mathscr{E}_{r,k} =  \{(b,\rho) \in \mathbf{B}_{r} \times \mathbf{D}_{k} \,:\,  \mathfrak{L}^*\rho = \mathfrak{D}\cdot(b \rho )\}, \\
    & \mathscr{E}_{k} =  \cup_{r}\mathscr{E}_{r,k}, \quad \text{and} \quad \mathscr{E} =  \cup_{k} \mathscr{E}_{k}.
\end{align*}
Then the unions in the second line are limits of increasing sequences. 

\medskip

\begin{lem}
Fix $p\in\mathbb{R}^n$. For each $k \geq 1$, there exists a random variable $u_{k} \in \mathscr{U}$ such that
\begin{equation}\label{UpE1}
    \sup_{\rho\in\mathbf{D}_{k}}\int_{\Omega}(\mathfrak{L} u_{k} (\omega) + H_{k}(p + \mathfrak{D} u_{k}(\omega), \omega))\rho \,\mathrm{d}\mathbf{P} \leq \overline{H}(p) +\frac{1}{k},
\end{equation}
where 
\begin{equation*}
    H_{r}(p,\omega)=\sup_{|q|\leq r}[\langle p, q \rangle - L(q,\omega)].
\end{equation*}
\end{lem}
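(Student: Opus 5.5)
We follow the minimax argument of \cite{KRV06}, with the diffusion operator replaced by $\mathfrak{L}$. Fix $p$ and $k$. The starting point is that, for every $r$,
\begin{equation*}
\overline{H}(p)\ \ge\ \sup_{(b,\rho)\in\mathscr{E}_{r,k}} \int_\Omega \big(\langle p,b\rangle - L(b,\omega)\big)\rho\,\mathrm{d}\mathbf{P}.
\end{equation*}
This holds because $\mathscr{E}_{r,k}\subset\mathscr{E}$ and by \eqref{EffH}.

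The constraint $\mathfrak{L}^*\rho=\mathfrak{D}\cdot(b\rho)$ is enforced with a Lagrange multiplier $u\in\mathscr{U}$. By \eqref{eq:FPweak},
\begin{equation*}
\inf_{u\in\mathscr{U}}\int_\Omega\big(\mathfrak{L}u+\langle b,\mathfrak{D}u\rangle\big)\rho\,\mathrm{d}\mathbf{P}
\end{equation*}
equals $0$ when the constraint holds and $-\infty$ otherwise, by linearity in $u$. Hence
\begin{equation*}
\overline{H}(p)\ \ge\ \sup_{\rho\in\mathbf{D}_k}\sup_{b\in\mathbf{B}_r}\inf_{u\in\mathscr{U}} F(u;b,\rho),
\end{equation*}
with
\begin{equation*}
F(u;b,\rho):=\int_\Omega\big(\mathfrak{L}u+\langle p+\mathfrak{D}u,b\rangle-L(b,\omega)\big)\rho\,\mathrm{d}\mathbf{P}.
\end{equation*}
Here $\mathfrak{L}u=\mathfrak{L}^*u$ because $K$ is even.

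Next we exchange $\sup$ and $\inf$. Rewrite the pair $(b,\rho)$ as $(m,\rho)$ with $m=b\rho$. The set $\{(m,\rho):|m|\le r\rho,\ \rho\in\mathbf{D}_k\}$ is convex. Since $\mathbf{D}_k$ is a finite-dimensional simplex, this set is compact in a suitable weak topology, for instance weak-$*$ for $m$, viewed as measures absolutely continuous with respect to $\rho\,\mathrm{d}\mathbf{P}$.

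In these variables $F$ becomes
\begin{equation*}
F=\int\rho\,\mathfrak{L}u+\langle p+\mathfrak{D}u,m\rangle-\rho L(m/\rho,\omega)\,\mathrm{d}\mathbf{P}.
\end{equation*}
It is concave in $(m,\rho)$, since the perspective of a convex function is convex. It is affine in $u$, and it is upper semicontinuous in $(m,\rho)$. The functions $\mathfrak{L}u$ and $\mathfrak{D}u$ are bounded, because $u\in\mathscr{U}$ has bounded $C^2$ realizations; for $\mathfrak{L}u$ this follows by splitting the Lévy integral at $|y|=1$.

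A minimax theorem (Sion/Ky Fan, with compactness on the $(m,\rho)$ side) then gives
\begin{equation*}
\overline{H}(p)\ \ge\ \inf_{u\in\mathscr{U}}\ \sup_{\rho\in\mathbf{D}_k}\sup_{b\in\mathbf{B}_r}F(u;b,\rho).
\end{equation*}
For fixed $u$ and $\rho$, the supremum over $b$ is taken pointwise in $\omega$, because $b$ ranges over all bounded measurable fields with $|b|\le r$. This gives
\begin{equation*}
\sup_{b\in\mathbf{B}_r}F(u;b,\rho)=\int\big(\mathfrak{L}u+H_r(p+\mathfrak{D}u,\omega)\big)\rho\,\mathrm{d}\mathbf{P}.
\end{equation*}
Measurable selection of the maximizer is easy since $L$ is continuous in $q$. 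Choosing $r=k$ and an almost-minimizing $u_k\in\mathscr{U}$ yields \eqref{UpE1} with error $1/k$.

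The main obstacle is justifying the minimax exchange. Concavity in $\rho$ requires the change of variables to $m=b\rho$, and we then need that $\mathbf{D}_k\ni\rho\mapsto\sup_b F$ is upper semicontinuous. I would first try to avoid the infinite-dimensional $b$-compactness issue. For fixed $u$, the inner supremum over $b$ is explicit, as shown above, and the resulting expression is linear and continuous in $\rho$ on the compact simplex $\mathbf{D}_k$. So it suffices to apply minimax to
\begin{equation*}
G(u,\rho):=\int\big(\mathfrak{L}u+H_r(p+\mathfrak{D}u)\big)\rho\,\mathrm{d}\mathbf{P},
\end{equation*}
which is linear and continuous in $\rho$ and convex in $u$, since $H_r$ is convex. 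Sion's theorem then gives
\begin{equation*}
\inf_u\sup_{\rho\in\mathbf{D}_k}G=\sup_{\rho\in\mathbf{D}_k}\inf_u G.
\end{equation*}
It remains to show $\inf_u G(u,\rho)\le\overline{H}(p)$ for each fixed $\rho\in\mathbf{D}_k$. I would prove this via a second duality over $b$ for fixed $\rho$: if $\inf_u G(u,\rho)>\overline H(p)$, the Hahn–Banach/minimax argument in $b$ produces a $b$ with $(b,\rho)\in\mathscr{E}$ and gain exceeding $\overline H(p)$, which is a contradiction. This step, and in particular handling the degenerate case where $\rho$ vanishes on a set of positive measure while membership in $\mathbf{D}$ requires positivity, is where I expect the real work to lie. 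I would handle it by perturbing $\rho$ toward the constant density $1$, using the convexity in the proof of Proposition~\ref{prop:propolL}.
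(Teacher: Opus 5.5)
Your proposal is correct. Your fallback route is in substance the paper's proof, which applies Sion's theorem twice. The first application is in $(b,u)\in\mathbf{B}_r\times\mathscr{U}$ for fixed $\rho$; this is exactly your ``second duality over $b$''. The second is in $(\rho,u)\in\mathbf{D}_k\times\mathscr{U}$ for your functional $G$, using compactness of $\mathbf{D}_k$, linearity in $\rho$ and convexity of $H_r$.

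That ordering does not avoid the $b$-compactness you wanted to sidestep. The fixed-$\rho$ exchange needs compactness on the $b$ side, since $\mathscr{U}$ is not compact. The paper simply uses weak(-$*$) compactness of $\mathbf{B}_r$ together with concavity and upper semicontinuity of $b\mapsto\int(\langle p+\mathfrak{D}u,b\rangle-L(b,\omega))\rho\,\mathrm{d}\mathbf{P}$. This is harmless by Banach--Alaoglu.

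Your primary route is a genuinely different and also valid way to perform both exchanges at once: a single Sion application on $\{(m,\rho): |m|\le r\rho,\ \rho\in\mathbf{D}_k\}$.
\begin{itemize}
\item The set is convex, bounded in $L^\infty$, and closed under weak-$*$ limits in $m$ combined with convergence in the finite-dimensional span of $\varphi_1,\dots,\varphi_k$. Hence it is compact.
\item The semicontinuity Sion actually needs is that of $F(u;\cdot)$ in $(m,\rho)$ for fixed $u$, not of $\rho\mapsto\sup_b F$. It follows from convexity of the perspective together with strong continuity of $\int\rho L(m/\rho,\omega)\,\mathrm{d}\mathbf{P}$ on the constraint set, where $|m/\rho|\le r$.
\end{itemize}
The joint route costs you the perspective-function argument. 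In exchange it saves one minimax and handles $\rho$ vanishing on a set automatically, since there $m=0$ and $b$ is irrelevant. The paper's route has more elementary hypotheses at each step: plain concavity in $b$ for fixed $\rho$, and linearity in $\rho$.

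Finally, your perturbation fills a point the paper passes over. Take $\rho_\lambda=(1-\lambda)\rho+\lambda$ and $b_\lambda=(1-\lambda)b\rho/\rho_\lambda$.
\begin{itemize}
\item The Fokker--Planck constraint is preserved, because $\int\mathfrak{L}\phi\,\mathrm{d}\mathbf{P}=0$ by stationarity.
\item By concavity, the gain of $(b_\lambda,\rho_\lambda)$ is at least $(1-\lambda)$ times the gain of $(b,\rho)$ minus $\lambda\,\mathbf{E}L(0,\omega)$.
\end{itemize}
The paper asserts $\overline{H}(p)\ge\sup_{\mathscr{E}_{r,k}}$ directly, although elements of $\mathbf{D}_k$ need not be strictly positive and so need not lie in $\mathbf{D}$; your perturbation justifies that step.
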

\begin{proof}
By the definition of $\overline{H}(p)$ and $\mathscr{E}_{r,k}$, we have
\begin{equation}\label{HpE}
    \overline{H}(p) \geq \sup_{(b,\rho)\in\mathscr{E}_{r,k}}\;\int_{\Omega} [\langle p,b(\omega)\rangle - L(b(\omega),\omega)]\rho(\omega) \mathrm{d}\mathbf{P}.
\end{equation}
Consider $\mathfrak{A}_{b}u = \mathfrak{L}u + \langle b, \mathfrak{D} u \rangle$ for $u\in \mathscr{U}$. Note that
\begin{equation*}
  \inf_{u\in \mathscr{U}}\int_{\Omega} (\mathfrak{A}_{b}u)\rho(\omega) \mathrm{d}\mathbf{P}= \left\{
   \begin{aligned}
   & 0,\quad & &\text{if}\ (b,\rho) \in \mathscr{E}_{r,k}, \\
   & -\infty, \quad & &\text{otherwise}.
   \end{aligned}
   \right.
\end{equation*}
We can hence insert the term $(\mathfrak{A}_bu)\rho$ to the integral in \eqref{HpE} and obtain
\begin{equation*}
    \overline{H}(p) \geq \sup_{\rho \in \mathbf{D}_{k}}\sup_{b\in \mathbf{B}_{r}}\inf_{u\in \mathscr{U}} \int_{\Omega} [\langle p,b(\omega)\rangle - L(b(\omega),\omega) + \mathfrak{A}_bu]\rho(\omega) \mathrm{d}\mathbf{P}.
\end{equation*}
Note that $\mathbf{B}_{r}$ is convex and weakly compact. Moreover, for each fixed $\rho \in \mathbf{D}_{k}$ the functional
\begin{equation*}
    (b,u) \rightarrow \int [\langle p,b(\omega)\rangle - L(b(\omega),\omega)+ \mathfrak{A}_bu(\omega)]\rho(\omega) d\mathbf{P}
\end{equation*}    
is concave and upper-semicontinuous in $b\in \mathbf{B}_{r}$ in the weak topology, and is linear and continuous in $u\in\mathscr{U}$. By minimax theorem (see e.g. \cite{S1958}), we can interchange the orders of taking extrema in $\mathscr{U}$ and $\mathbf{B}_{r}$, and, get
\begin{align*}
    \overline{H}(p) \geq & \sup_{\rho \in \mathbf{D}_{k}}\inf_{u\in \mathscr{U}}\sup_{b\in\mathbf{B}_{r}} \int_{\Omega} [\langle p,b(\omega)\rangle - L(b(\omega),\omega) + \mathfrak{A}_{b}u]\rho(\omega) \mathrm{d}\mathbf{P} \\
 \geq & \sup_{\rho \in \mathbf{D}_{k}}\inf_{u\in \mathscr{U}} \int_{\Omega} (\mathfrak{L}u + H_{r}(p + \mathfrak{D} u, \omega))\rho(\omega) \mathrm{d}\mathbf{P}
\end{align*}
where the last line follows from the definition of $H_r$. 
Since the functional
\begin{equation*}
    (\rho, u) \rightarrow \int_{\Omega} (\mathfrak{L}u + H_{r}(p + \mathfrak{D} u, \omega))\rho(\omega) \mathrm{d}\mathbf{P}
\end{equation*}
is convex and continuous in $u\in \mathscr{U}$, is linear and continuous in $\rho \in \mathbf{D}_{k}$, and $\mathbf{D}_{k}$ is compact in $L^1(\Omega)$, another usage of the minimax theorem yields
\begin{equation*}
     \overline{H}(p) \geq \inf_{u\in \mathscr{U}}\sup_{\rho \in \mathbf{D}_{k}} \left[ \int_\Omega (\mathfrak{L}u + H_{r}(p + \mathfrak{D} u, \omega))\rho(\omega) \mathrm{d}\mathbf{P}\right].
\end{equation*}
This yields the desired result.
\end{proof}

Fix $p\in \R^n$. Let $\{u_k\}$ be determined above and let $v_k = \mathfrak{D}u_k$. Our goal is to extract a subsequence of $v_k$ that has weak limit $v$ and show that $v$ satisfies \eqref{eq:sublifted}. This is the content of the next lemma.

\begin{lem}\label{Gardv}
The sequence $\{v_{k}\}$ is weakly compact in $L^{\beta}(\Omega;\R^n)$, and any weak limit $v = (v^1,\dots,v^n) \in L^{\beta}(\Omega;\R^n)$ is a zero mean and curl-free vector field in the following sense,
$$
\mathbf{E}[v] = 0, \quad \mathfrak{D}\times v = \left(\mathfrak{D}_j v^i - \mathfrak{D}_i v^j\right)_{i,j} = 0.
$$
and $v$ satisfies \eqref{eq:sublifted} almost surely. 
\end{lem}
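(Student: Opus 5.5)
The plan follows \cite{KRV06}: first prove a uniform $L^\beta$ bound on $v_k=\mathfrak{D}u_k$, then extract a weak limit, and finally pass to the limit in \eqref{UpE1} by convexity. In \eqref{UpE1} the suprema over $\mathbf{D}_k$ increase with $k$, and $\mathbf{D}_k$ contains the constant density $\rho\equiv1$ once it belongs to the dense family; we may arrange $\varphi_1\equiv 1$. Testing with $\rho\equiv 1$ and using invariance, $\mathbf{E}[\mathfrak{L}u_k]=0$ since $\mathfrak{L}^*1=0$. This gives $\mathbf{E}[H_k(p+v_k,\omega)]\le \overline H(p)+1$. The truncated Hamiltonian $H_k$ does not obey the full lower bound \eqref{A1H}. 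However, $H_k\ge \langle p,q\rangle-L(q)$ for $|q|\le k$. Choosing $q$ as the maximizer for $H$ (or a radial truncation of it) gives $H_k(p,\omega)\ge \min\{c_1|p|^\beta-c_2,\; k|p|/2-C\}$. Combined with the bound on $\mathbf{E}[H_k]$, this should yield $\sup_k\mathbf{E}|v_k|^\beta<\infty$. A cleaner alternative is to use the lower bound $H_k(p+v)\ge c\,|p+v|^{\beta}\wedge(\dots)$ together with the fact that truncation only matters where $|p+v_k|$ is large, controlled via Chebyshev. Since $1<\beta<\infty$, reflexivity then gives weak compactness in $L^\beta(\Omega;\R^n)$.

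The linear constraints pass to weak limits. We have $\mathbf{E}[v_k]=\mathbf{E}[\mathfrak{D}u_k]=0$ by stationarity (integration by parts with $\mathfrak{D}^*=-\mathfrak{D}$ against the constant). For curl-freeness, take $\phi\in\mathscr{U}$; then $\mathbf{E}[v_k^i\mathfrak{D}_j\phi-v_k^j\mathfrak{D}_i\phi]=-\mathbf{E}[u_k(\mathfrak{D}_i\mathfrak{D}_j-\mathfrak{D}_j\mathfrak{D}_i)\phi]=0$. Weak convergence preserves both identities, so $\mathbf{E}v=0$ and $\mathfrak{D}\times v=0$ in the distributional sense.

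For \eqref{eq:sublifted}, fix $j$ and $\rho\in\mathbf{D}_j$; for $k\ge j$, \eqref{UpE1} holds with this $\rho$. Write $\mathbf{E}[\rho\,\mathfrak{L}u_k]=\mathbf{E}[u_k\mathfrak{L}\rho]=-\mathbf{E}[\langle v_k,\mathfrak{R}\mathfrak{D}\rho\rangle]$, using the divergence form $\mathfrak{L}=\mathfrak{D}\cdot\mathfrak{R}\mathfrak{D}$ and self-adjointness. This identity expresses the nonlocal term as a linear functional of $v_k$ alone, which is exactly the device announced in the abstract. It requires $\mathfrak{R}\mathfrak{D}\rho\in L^{\beta'}$, and here lies the main obstacle. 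The kernel $R$ decays like $|y|^{2-n-\alpha}$, which is integrable near $0$ but not at infinity. So I would split $\mathfrak{R}\mathfrak{D}\rho$ into a near part, which is bounded when $\rho\in\mathscr{U}$, and a far part. In the far part I move the derivative onto the kernel, giving $|\nabla R|\sim|y|^{1-n-\alpha}$, which is integrable at infinity because $\alpha>1$. Both pieces are then bounded when $\rho\in\mathscr{U}\cap L^\infty$. If needed, $\varphi_j$ can be chosen bounded, with bounded derivatives up to order two.

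With the linear term continuous under weak convergence, consider the convex term $\mathbf{E}[\rho H_k(p+v_k)]$. Since $H_k\uparrow H$ and $H_m\le H_k$ for $k\ge m$, weak lower semicontinuity of convex integrals gives $\liminf_k\mathbf{E}[\rho H_k(p+v_k)]\ge\mathbf{E}[\rho H_m(p+v)]$. Letting $m\to\infty$ by monotone convergence, this is $\mathbf{E}[\rho H(p+v)]$. Hence $\mathbf{E}[\rho(\mathfrak{J}v+H(p+v,\omega))]\le\overline H(p)$ for every $\rho$ in $\bigcup_j\mathbf{D}_j$, interpreting $\mathbf{E}[\rho\mathfrak{J}v]:=-\mathbf{E}[\langle v,\mathfrak{R}\mathfrak{D}\rho\rangle]$. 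Density of $\{\varphi_j\}$ in $L^1_+$ then yields \eqref{eq:sublifted} almost surely, in this weak sense. This final density step needs some care, since the linear term is not $L^1$-continuous in $\rho$. I would handle it by mollifying (Lemma~\ref{Mollify}) and using $\rho=\rho_\delta$-type tests, whose $\mathfrak{R}\mathfrak{D}$ images remain controlled.
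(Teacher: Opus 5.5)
\textbf{The gap is in your first step.} Your bound $H_k(q,\omega)\ge\min\{c_1|q|^\beta-c_2,\,k|q|/2-C\}$, together with $\mathbf{E}[H_k(p+v_k,\omega)]\le\overline H(p)+1$, does not give $\sup_k\mathbf{E}|v_k|^\beta<\infty$. Beyond the crossover height $r_k\approx k^{1/(\beta-1)}$ the truncated Hamiltonian grows only linearly. So putting $|p+v_k|=M\gg r_k$ on a set of probability $\pi$ costs about $kM\pi$ in $\mathbf{E}[H_k]$, but contributes $M^\beta\pi$ to $\mathbf{E}|v_k|^\beta$.

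In fact no $L^\beta$ bound can be extracted from \eqref{UpE1} at all. Since $H_k(\cdot,\omega)$ is $k$-Lipschitz and $\rho\ge0$, you can add to $u_k$ a narrow, steep stationary bump $s$; this is already possible in the periodic setting. The left side of \eqref{UpE1} then changes by at most $k\|\mathfrak{D}s\|_{L^1}\max_{j\le k}\|\varphi_j\|_{L^\infty}+\|s\|_{L^1}\max_{j\le k}\|\mathfrak{L}\varphi_j\|_{L^\infty}$. This can be made smaller than any prescribed slack while $\|\mathfrak{D}s\|_{L^\beta}$ is arbitrarily large. So reflexivity has nothing to act on. Your later requirement $\mathfrak{R}\mathfrak{D}\rho\in L^{\beta'}$ also presupposes a weak $L^\beta$ convergence you do not have.

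\textbf{How to repair it.} What your inequality does give is uniform integrability. Since $H_k+C\ge0$, and $H_k(q,\omega)+C\ge c\min\{M^{\beta-1},k\}\,|q|$ for $|q|\ge M$, you get
$\mathbf{E}\big[|p+v_k|\mathbf{1}_{\{|p+v_k|\ge M\}}\big]\le C'/\min\{M^{\beta-1},k\}$.
This is small uniformly in large $k$, and finitely many $k$ are harmless.

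This is exactly the paper's route: uniform integrability as in \cite[Theorem 5.2]{KRV06}, then Dunford--Pettis. Note that Dunford--Pettis gives weak compactness in $L^1$, not in $L^\beta$. The paper's wording is loose at precisely this point, and the compactness assertion of the lemma should be read in $L^1$.

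Your remaining steps survive weak $L^1$ convergence, because every test object is bounded:
\begin{itemize}
\item $\mathfrak{D}_j\phi\in L^\infty$ for the curl identity.
\item Your near/far splitting of the kernel actually shows $\mathfrak{R}\mathfrak{D}\rho\in L^\infty$, not just $L^{\beta'}$.
\item $w\mapsto\mathbf{E}[\rho H_m(p+w,\omega)]$ is convex and $m\|\rho\|_{L^\infty}$-Lipschitz on $L^1$, hence weakly lower semicontinuous there.
\end{itemize}
Finally, $v\in L^\beta$ is recovered a posteriori from the limit inequality with $\rho\equiv1$:
$c_1\mathbf{E}|p+v|^\beta-c_2\le\mathbf{E}[H(p+v,\omega)]\le\overline H(p)$.

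With this repair your argument is complete. It is more careful than the paper's in two places. First, it gives a meaning to $\mathfrak{J}v$ for $v\in L^\beta$, via $-\mathbf{E}\langle v,\mathfrak{R}\mathfrak{D}\rho\rangle$. Second, it handles the final density step explicitly, whereas the paper simply invokes density of $\bigcup_k\mathbf{D}_k$ in $L^1_+(\Omega)$.
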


\begin{proof}
By the definition of $v_k$, we have  $\mathfrak{L}u_k = \mathfrak{J}(v_k)$. From (\ref{UpE1}) it follows that
\begin{equation}\label{UpE12}
    \sup_{\rho\in\mathbf{D}_{k}}\int_{\Omega}\left(\mathfrak{J}v_{k} (\omega) + H_{k}(p + v_{k}(\omega), \omega)\right)\rho(\omega)\, \mathrm{d}\mathbf{P} \leq \overline{H}(p) +\frac{1}{k}.
\end{equation}
The exact argument in Theorem 5.2 of \cite{KRV06}, which uses only the uniform super-linearity assumption $p \rightarrow H(p,\omega) \geq c_{1}|p|^{\beta}-c_{2}$, shows that the sequence $\{v_k\}_{k\in \mathbb{N}^*}$ is uniformly integrable. Then the Dunford-Pettis theorem (see \cite[Theorem 4.7.18]{B2006}) yields that the sequence $\{v_{k}\}$ is weakly compact in $L^{\beta}(\Omega)$. The mean-zero and curl-free properties of $v_k$ are preserved for $v$ in the weak limit. %Thus any weak limit $v$ it is a curl-free vector field $\nabla\times v =0$, and it satisfies $\mathbb{E}^{\mathbb{P}}v = \lim_{k\rightarrow \infty}\mathbb{E}^{\mathbb{P}}v_{k} = 0$. 

We only show that the equation \eqref{eq:sublifted} holds for $v$. Note that the convexity of $H_{k}$ guarantees that for every fixed $\rho \in  \mathbf{D}_{k}$, the functional
\begin{equation}
    v \to \int_{\Omega} \left(\mathfrak{J} v (\omega) + H_{k}(p + v(\omega), \omega)\right)\rho(\omega) \mathrm{d}\mathbf{P}
\end{equation}
is weakly lower semicontinuous. Moreover, $\{ \mathbf{D}_{k} \}$ are increasing sequences of sets, and $H_{k} \nearrow H$ as $k \rightarrow \infty$. Thus for any weak limit $v$ of a convergent subsequence (still denote $\{v_{k}\}$), we can take the limit as $k\rightarrow \infty$ in (\ref{UpE12}) and obtain the bound
\begin{equation*}
    \sup_{\rho\in \cup_{k}\mathbf{D}_{k}}\int_{\Omega}(\mathfrak{J}v (\omega) + H(p + v(\omega), \omega))\rho(\omega) \mathrm{d}\mathbf{P} \leq \overline{H}(p).
\end{equation*}
Since $\cup_{k}\mathbf{D}_{k}$ is dense in $L^1_{+}(\Omega)$, the weak limit $v$ also satisfies the almost sure bound  
\begin{equation*}
    \mathfrak{J}v (\omega) + H(p + v(\omega), \omega) \leq \overline{H}(p).
\end{equation*}
This completes the proof.
\end{proof}

%%%%%
\begin{lem}\label{Ecv} Given $p\in \R^n$, let $v$ be the $L^\beta(\Omega)$ random vector determined in Lemma \ref{Gardv}, and let $\widetilde{v}$ be the random field $\widetilde{v}(x,\omega)=v(\tau_x \omega)$. Then $\widetilde{v}$ is in $L^\beta_{\rm loc}(\R^n)$ uniformly in $\Omega$ in the following sense: for any $R\ge 1$ there is a constant $C_{R} < \infty$, which depends only on $R,p,c_1,c_2,\beta$, such that 
\begin{equation}
\label{eq:unifbddv}
     \sup_{x\in\mathbb{R}^n}\int_{B_R(x)} |\widetilde v(y,\omega)|^{\beta} \,\mathrm{d}y  \le C_{R,p}
\end{equation}
holds $\mathbf{P}$-almost surely.
\end{lem}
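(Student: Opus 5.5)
The plan is to integrate the pointwise inequality \eqref{eq:sublifted} against a cutoff on $\R^n$, use the divergence structure $\mathscr{J}=\nabla\cdot\mathcal{R}$ to move all derivatives onto the cutoff, and close with the superlinear lower bound in ({\bf A1}). Fix $R\ge1$ and $x_0\in\R^n$. Choose a nonnegative $\psi\in C_c^\infty(B_{2R}(x_0))$ with $\psi\equiv 1$ on $B_R(x_0)$, translated from a fixed profile, so that all its derivative bounds depend only on $R$.

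First I lift \eqref{eq:sublifted} to realizations. Since it holds $\mathbf{P}$-a.s. and $\tau_x$ preserves $\mathbf{P}$, Fubini gives, almost surely,
$$\mathscr{J}\widetilde v(x,\omega)+H(p+\widetilde v(x,\omega),\tau_x\omega)\le \overline H(p)$$
for a.e. $x$, in the distributional sense in $x$. Here $\widetilde v$ is curl free and $\mathscr{J}\widetilde v=\nabla\cdot(\mathcal{R}\widetilde v)$. Because $v$ is only a weak limit, I would justify the distributional meaning by going back to the approximants $v_k=\mathfrak{D}u_k$. Alternatively, one can mollify $v$ as in Lemma \ref{Mollify}: by convexity of $H$ and ({\bf A2}), the mollified field satisfies the inequality up to an $o(1)$ error, and the final bound passes to the limit by Fatou.

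Next I test against $\psi$. This gives
$$\int \psi\,H(p+\widetilde v,\tau_x\omega)\,dx\le \overline H(p)\|\psi\|_1-\int\psi\,\mathscr{J}\widetilde v\,dx .$$
By ({\bf A1}), the left side is at least $c_1\int\psi|p+\widetilde v|^\beta-c_2\|\psi\|_1$.

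The nonlocal term is the main obstacle. Formally,
$$\int\psi\,\mathscr{J}\widetilde v=\int \psi\,\mathscr{L}V=\int V\,\mathscr{L}\psi ,$$
where $V$ is the potential from \eqref{eq:vtoV}, normalized so that its average over $B_{2R}(x_0)$ vanishes. The kernel of $\mathscr{L}\psi$ is not compactly supported: $|\mathscr{L}\psi(x)|\lesssim (1+|x-x_0|)^{-n-\alpha}$. So I need control of $V$ far away. I will split the integral dyadically over annuli $A_j=B_{2^{j+1}R}(x_0)\setminus B_{2^jR}(x_0)$. On each annulus I use Poincaré-type bounds: the oscillation of $V$ over $B_{2^{j}R}$ is bounded by $(2^jR)\cdot(\text{average of }|\widetilde v|^\beta)^{1/\beta}$, and chaining from $B_{2R}(x_0)$ adds a factor $j$. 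This yields
$$\Big|\int V\mathscr{L}\psi\Big|\lesssim \sum_j j\,2^{j(1-\alpha)}R^{1-\alpha}\,S^{1/\beta},\qquad S:=\sup_y \frac{1}{R^{n}}\int_{B_R(y)}|\widetilde v|^\beta .$$
The sum converges since $\alpha>1$. A more direct route avoids $V$: write $\int\psi\,\mathscr{J}\widetilde v=-\int\nabla\psi\cdot\mathcal{R}\widetilde v$ and bound $\mathcal{R}\widetilde v$ on $B_{2R}$ using the kernel decay $|R(z)|\lesssim|z|^{2-n-\alpha}$. For $\alpha>1$, however, the tail $\sum_j 2^{j(2-\alpha)}$ of this kernel diverges, so I expect the $V$ route, which uses the curl-free and mean-zero structure, to be necessary. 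This is the step I would check most carefully.

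Finally I close the estimate. Take the supremum over $x_0$ and write $M:=\sup_{x_0}\int_{B_R(x_0)}|\widetilde v|^\beta$; it is finite a.s. after a truncation argument, working with $v_k$ or a truncated version first. Combining the steps above and using $|p+\widetilde v|^\beta\ge 2^{1-\beta}|\widetilde v|^\beta-|p|^\beta$, I obtain
$$M\le C(R,p)\big(1+M^{1/\beta}\big).$$
Since $\beta>1$, Young's inequality gives $M\le C_{R,p}$. The constant depends on $\overline H(p)$, which itself is bounded via \eqref{A1H} for $\overline H$ in terms of $p$ and the constants in ({\bf A1}).
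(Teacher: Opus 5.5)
\textbf{There is a genuine gap in your closing step.} The quantity $M=\sup_{x_0\in\R^n}\int_{B_R(x_0)}|\widetilde v|^\beta$ is invariant under $\omega\mapsto\tau_y\omega$. For a general $v\in L^\beta(\Omega)$ it can be $+\infty$, and its a.s. finiteness is precisely what the lemma asserts. So $M\le C(R,p)(1+M^{1/\beta})$ proves nothing unless $M<\infty$ is known in advance, and none of the truncations you suggest supplies this:
\begin{itemize}
\item The approximants $v_k=\mathfrak{D}u_k$ are bounded. But they satisfy only the averaged inequality \eqref{UpE12} against $\rho\in\mathbf{D}_k$, not an inequality on $\R^n$. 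That inequality also involves $H_k$, which grows only linearly in $p$, so the superlinear lower bound you rely on is not available uniformly in $k$.
\item Cutting off $v$ destroys $\mathfrak{D}\times v=0$ and the inequality.
\item Mollification does not help either. The $L^\infty$ bound \eqref{eq:vhatbdd} on $v_\delta$ is itself deduced from this lemma. Moreover, transferring \eqref{eq:sublifted} to $v_\delta$ with an error uniform in $\omega$ needs ({\bf B}), as in Lemma \ref{Appv}: ({\bf A2}) controls $H(q,\tau_x\omega)-H(q,\omega)$ only for bounded $q$, while $p+v$ is unbounded.
\end{itemize}

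The missing idea is to absorb a quantity that is finite a priori. The paper tests against the non-compactly supported weight $\phi(x)=c(1+|x|^2)^{-(n+\delta)/2}$ with $0<\delta<\beta(\alpha-1)$.
\begin{itemize}
\item By Fubini and stationarity, $I:=\int|\widetilde v|^\beta\phi$ satisfies $\mathbf{E}I=\mathbf{E}|v|^\beta$, so $I<\infty$ a.s. The same holds simultaneously for all translates of $\phi$, since they are comparable to $\phi$.
\item The nonlocal term becomes $\int\widetilde v\cdot\mathcal{R}^*\nabla\phi$, and $|\mathcal{R}^*\nabla\phi(x)|\lesssim(1+|x|)^{-(n+\alpha-1)}$ by \eqref{eq:bddR*phi}.
\item The condition $\delta<\beta(\alpha-1)$ is exactly what makes $\int|\mathcal{R}^*\nabla\phi|^{\beta'}\phi^{-\beta'/\beta}<\infty$. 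H\"older and Young then absorb the nonlocal term into $I$ itself, giving a deterministic bound $I\le C_5$; restricting to $B_R$ and translating yields the lemma.
\end{itemize}

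Your dismissal of the ``direct route'' is also mistaken. The divergence appears only if you bound $\mathcal{R}\widetilde v$ pointwise. Putting $\mathcal{R}^*$ on the test function gains one order of decay because $\nabla\phi$ (or $\nabla\psi$) is a gradient. This is the same gain your potential $V$ extracts, without Poincar\'e chaining.

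Your sup-based scheme could alternatively be salvaged. You would use ergodic-theorem bounds on ball averages at scales $2^jR\gtrsim|x_0|$ instead of $S$, and then bootstrap the polynomial growth of $\sup_{|x_0|\le r}\int_{B_R(x_0)}|\widetilde v|^\beta$. That is an additional argument your sketch does not contain.
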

\begin{proof} Let $\phi(x) = c(\sqrt{1+|x|^2})^{-n-\delta}$ for some $\delta>0$ to be fixed later and $c>0$ is chosen so that $\int_{\R^n} \phi =1$. In view of \eqref{eq:sublifted} and assumption $(\bf{A})$, we get
\begin{equation}
    \int_{\mathbb{R}^n}(\mathscr{J}\widetilde v (x,\omega) + c_{1}|p + \widetilde v(x,\omega)|^{\beta} - c_{2})\phi(x)\,\mathrm{d}x \leq \overline{H}(p).
\end{equation}
Recall that $\mathscr{J}\widetilde v=\nabla \cdot (\mathcal{R}\widetilde v)$ and that $\mathcal{R}$ is a regular integral operator; see \eqref{eq:cRdef}. Using integration by parts and some basic inequalities, we get
\begin{equation}\label{vLpE}
    \int_{\mathbb{R}^n} |\widetilde v(x,\omega)|^{\beta}\phi(x)\,\mathrm{d}x  
    \leq C_{1} \int_{\mathbb{R}^n}  \mathcal{R} \widetilde v (x,\omega)\cdot \nabla \phi(x)\,\mathrm{d}x + C_{2},
\end{equation}
where $C_1 = C_1(c_1,\beta)$ and $C_2(c_1,\beta,p)$ are constants that do not depend on $\omega$. We claim that $|\mathcal{R}^*\nabla \phi(x)|$ is bounded on $\R^n$ and there exists a constant $C_3$ depending only on $\phi$ (and hence on $c$ on $n$) such that for all $x$, 
\begin{equation}
\label{eq:bddR*phi}
    \left|\mathcal{R}^* \nabla \phi(x)\right| \le \frac{C_3}{|x|^{n+(\alpha-1)}}.%\frac{C_3}{|x|^{n+\delta+(\alpha-1)}}.
\end{equation}

With the uniform boundedness and the estimate above for $\mathcal{R}^*\nabla \phi$, and using the expression of $\phi$, we easily check
\begin{equation*}
\begin{aligned}
    C_4 := &\int_{\R^n} \left|\mathcal{R}^*\nabla \phi(x)\right|^{\beta'} \frac{1}{|\phi(x)|^{\beta'/\beta}}\,\dd x\\
    \le & \|\mathcal{R}^*\nabla \phi\|_{L^\infty}|B_1(0)| + C\int_{\{|x|\ge 1\}} \left(\frac{1}{|x|^{n+\alpha-1}} (\sqrt{1+|x|^2})^{\frac{n+\delta}\beta}\right)^{\beta'}\,\dd x\\
    < &\infty,
    \end{aligned}
\end{equation*}
as long as $\alpha > 1+\frac{\delta}{\beta}$ (i.e., $\delta < \beta(\alpha-1)$). Given $\alpha \in (1,2)$ and $\beta >1$, we can always choose $\delta>0$ so that this condition holds. Then applying H\"older's and then Young's inequalities, we get
\begin{equation*}
\begin{aligned}
    J := &\left|\int_{\mathbb{R}^n}  \mathcal{R} \widetilde v (x,\omega)\cdot \nabla \phi(x)\,\mathrm{d}x\right| = \left|\int_{\mathbb{R}^n} \widetilde v (x,\omega)\cdot \mathcal{R}^*\nabla \phi(x)\,\mathrm{d}x\right|\\
    \le & \left(\int_{\R^n} |\widetilde v(x)|^\beta \phi(x)\,\dd x\right)^{\frac1{\beta}}\left(\int_{\R^n} |\mathcal{R}^*\nabla \phi(x)|^{\beta'} \frac{1}{|\phi(x)|^{\beta'/\beta}}\,\dd x\right)^{\frac{1}{\beta'}}\\
    \le &\frac{1}{2C_1} \int_{\R^n}|\widetilde v(x,\omega)|^\beta \phi(x)\,\upd x + C_4\frac{1}{\beta'}(2C_1/\beta)^{\beta'/\beta}.
    \end{aligned}
\end{equation*}
Combining this estimate with \eqref{vLpE}, we get a constant $C_5=C_5(p,n,c_1,c_2,\alpha,\beta)$ such that
\begin{equation}
    \int_{\R^n} |\widetilde{v}(y,\omega)|^\beta \phi(y)\,\dd y \le C_5.
\end{equation}
This yields, for each $R>0$, a constant $C_R$ depending on $C_5$ and $R$ such that
\begin{equation*}
    \int_{B_R(0)} |\widetilde{v}(y,\omega)|^\beta \,\dd y \le C_R.
\end{equation*}
Finally, translating $\phi$ to $\phi(\cdot -x)$, we get the desired estimate \eqref{eq:unifbddv}.

\medskip

It remains to check the estimate \eqref{eq:bddR*phi}, and for this it suffices to consider $\mathcal{R}^*\partial_1 \phi$ as the other components are bounded in the same way. Let $\psi = \partial_1\phi$ and verify that $\psi$ is uniformly bounded and $\psi(x) \sim |x|^{-n-\delta-1}$ for large $|x|$. By the explicit form of the kernel of $\mathcal{R}$ in \eqref{eq:cRdef}, we see that for all $x\in \R^n$
\begin{equation*}
\begin{aligned}
    |\mathcal{R}^*\psi(x)| &= \left|\int_{\R^n} R(y-x)\psi(y)\,\dd y\right|\\ 
    &\le C\int_{\R^n} \frac{1}{|x-y|^{n+\alpha-2}}|\psi(y)|\,\dd y \le C(\|\psi\|_{L^\infty(\R^n)} + \|\psi\|_{L^1(\R^n)}).
    \end{aligned}
\end{equation*}
This verifies the uniform boundedness of $|\mathcal{R}^*\psi|$. Note that here and in the rest of the proof, the bounding constant $C$ will vary from line to line.

To prove the rate of $|\mathcal{R}^*\psi|$  for $|x|$ large, say $|x|\ge 4$, we first write the definition of $\mathcal{R}^*\psi$ as the sum of $I_1+I_2$ as follows:
\begin{equation*}
    I_1 = \int_{B_{|x|}(x/2)} R(y-x)\psi(y)\,\dd y,
\end{equation*}
and
\begin{equation*}
    I_2 = \int_{\R^n\setminus B_{|x|}(x/2)}  R(y-x)\psi(y)\,\dd y.
\end{equation*}

For $I_2$, we note that outside $B_{|x|}(x/2)$, 
\begin{equation*}
    \frac{|y|}{|y-x/2|}, \; \frac{|y-x|}{|y-x/2|} \quad \in \quad [\frac12, \frac32].
\end{equation*}
We then get
\begin{equation*}
    I_2 \le C\int_{\R^n\setminus B_{|x|}(x/2)} \frac{1}{|y-x/2|^{n+\alpha-2}} \frac{1}{|y-x/2|^{n+\delta+1}}\,\dd y \le C\frac{1}{|x|^{n+\delta+\alpha-1}}.
\end{equation*}

For $I_1$, we further break the integration domain into two regions. On $B_{|x|}(x/2)\setminus B_{|x|/2}(0)$, we have $|y|\ge |x|/2$, and hence $|\psi(y)|\le C|x|^{-n-\delta-1}$. We get
\begin{equation*}
\begin{aligned}
    \left| \int_{B_{|x|}(x/2)\setminus B_{|x|/2}(0)} R(y-x)\phi(y) \dd y\right| &\le C\frac{1}{|x|^{n+\delta+1}} \int_{B_{2|x|}(x)} |R(y-x)|\,\dd y\\
    &\le C\frac{1}{|x|^{n+\delta+1}} \int_{B_{2|x|}(0)} \frac{1}{|y|^{n-2+\alpha}}\,\dd y\\
    &\le C\frac{1}{|x|^{n+\delta+\alpha-1}}.
    \end{aligned}
\end{equation*}
For the remaining part of $I_1$, note that $R(y-x)$ and $\psi(y)$ are both smooth functions in this region (there is no singularity), and an integration by parts shows
\begin{equation*}
    \int_{B_{|x|/2}(0)} R(y-x)\psi(y)\,\dd y = \int_{\partial B_{|x|/2}(0)} n_1(y)R(y-x)\phi(y)\,\dd y - \int_{B_{|x|/2}(0)} \partial_1 R(y-x)\phi(y)\,\dd y.
\end{equation*}
For the integral on the sphere where by an abuse of notation $\dd y$ means the surface measure, we note $|n_1|\le 1$ and $|y-x|\ge |x|/2$. Hence on the sphere, $|R(y-x)|\le C|x|^{-n-\alpha+2}$ and $|\phi(y)|\le C|x|^{-n-\delta}$. It follows that
\begin{equation*}
    \left|\int_{\partial B_{|x|/2}(0)} n_1(y)R(y-x)\phi(y)\,\dd y\right| \le C\frac{1}{|x|^{n+\delta+\alpha-1}}.
\end{equation*}
For the other integral over $B_{|x|/2}(0)$, we note that $|y-x|>|x|/2$, and hence by the homogeneity of $\nabla R$ (of degree $-n-\alpha+1$), 
\begin{equation*}
    \left|\int_{B_{|x|/2}(0)} \partial_1 R(y-x)\phi(y)\,\dd y\right| \le C\frac{1}{|x|^{n+\alpha-1}} \int_{B_{|x|/2}(0)} \phi(y)\,\dd y \le C\frac{1}{|x|^{n+\alpha-1}}.
\end{equation*}
Combining all those estimates above, we verify \eqref{eq:bddR*phi} and complete the proof of the lemma.
\end{proof}

\begin{rem} By an application of Fubini's theorem, one easily sees that, for any random vector $f\in L^\beta(\Omega)$, the random field $\widetilde f(x,\omega) = f(\tau_x \omega)$ belongs to $L^\beta_{\rm loc}(\R^n)$, $\mathbf{P}$-a.s. The lemma above, however, yields a stronger result under the given assumptions: it provides local $L^\beta$ estimates for the random field $\widetilde v(x,\omega) = v(\tau_x \omega)$ that are uniform in $\omega$. 
\end{rem}

\medskip 

To construct a vector field on $\R^n$ out of $v$, we first mollify it by \eqref{eq:rvmolify}. For $\delta \in (0,1)$, let
\begin{equation}\label{Mvcov}
    v_{\delta}(\omega) := \frac{1}{\delta^n} \int_{\mathbb{R}^n} \rho(y/\delta)v(\tau_y\omega)\,\dd y, 
\end{equation}
where $\rho\in C^\infty_c(\R^n;[0,\infty))$ is a smooth even function supported in the unit ball $B_1(0)$ and satisfies $\int_{\mathbb{R}^n} \rho (x)\,\mathrm{d}x =1$. Let $\widetilde v(\cdot,\omega)$ and $\widetilde v_\delta(\cdot,\omega)$ be the vector field on $\R^n$ obtained by the translation operator $\{\tau_x\}$. We check directly that 
\begin{equation*}
    \widetilde{v}_\delta(x,\omega) = \frac{1}{\delta^n} \int_{\R^n} \rho\left(\frac{x-y}{\delta}\right) v(\tau_y\omega)\,\dd y = \rho_\delta * \widetilde{v}(x,\omega)
\end{equation*}
So, $\widetilde v_\delta(\cdot,\omega)$ is a smooth family of random vector fields approximating $\widetilde{v}(\cdot,\omega)$ as $\delta \to 0$. Moreover, the condition $\mathfrak{D}\times v = 0$ implies that, for each $\delta\in (0,1)$, the smooth vector field $\widetilde{v}_\delta(\cdot,\omega)$ is curl-free. As a result, the line integral
\begin{equation}\label{Mpot}
    V_{\delta}(x,\omega) = \int_{0 \rightsquigarrow x}\langle v_{\delta}(\tau_{z(s)}\omega), \mathrm{d}z(s)\rangle,
\end{equation}
rigorously defines a smooth potential field satisfying $V_\delta(0,\omega) = 0$, $\nabla V_\delta(x,\omega) = \widetilde{v}_\delta(x,\omega)$. Note that in the definition, the simple smooth path $0\rightsquigarrow$ can be arbitrary because $\widetilde{v}_\delta$ is curl-free.

Moreover, in view of the local $L^\beta$ estimate \eqref{eq:unifbddv} of $\widetilde{v}$ that is uniform in $\omega$, we get, for each $\delta \in (0,1)$, there is a constant $C_\delta<\infty$ so that
\begin{equation}
    \label{eq:vhatbdd}
    \|v_\delta(\omega)\|_{L^\infty(\Omega)} \le C_\delta.
\end{equation}
By the definition of $V_\delta(\cdot,\omega)$, we have $|V_\delta(x,\omega)| \le C_\delta|x|$ almost surely in $\Omega$. In view of the mean-zero and stationary property of $\widetilde v_\delta(\cdot,\omega)$ and by the ergodic theorem (see e.g., Lemma A.5 of \cite{MR2914944}), 
% also
% \begin{equation}\label{VdelE}
%     \sup_{\omega\in\Omega} |V_{\delta}(0,\omega)| \leq C_{\delta}.
% \end{equation}
%Moreover, the ergodicity of $\tau_{x}$ yields that 
for any $\delta >0$ and $l < \infty$, we have
\begin{equation}\label{ergoV}
    \lim_{\epsilon \to 0} \epsilon \sup_{|x|\leq l}|V_{\delta}(\epsilon^{-1}x,\omega)| = %\lim_{\epsilon \to 0} \epsilon \sup_{|x|\leq l}|V_{\delta}(0,\tau_{\epsilon^{-1}x}\omega)| = \sup_{|x|\leq l}\mathbb{E}v_{\delta}(0,\omega) = 
    0, \quad \mathbf{P}\text{-almost surely}.
\end{equation}
For $v_{\delta}$ and $V_{\delta}$, we have the following technical lemma.

\begin{lem}\label{Appv}
Assume that the assumptions {\upshape({\bf A1})({\bf A2})} and {\upshape({\bf B})} hold. For any $p\in \R^n$ and $\delta \in (0,1)$, there exists a random vector $\hat{v}_{\delta}$, satisfying $\mathbf{E}\hat{v}_{\delta} = 0$, $\mathfrak{D}\times \hat{v}_{\delta} = 0$, such that
\begin{equation}\label{IeHd}
    \mathfrak{J}\hat{v}_{\delta}(\omega) + H(p + \hat{v}_{\delta}(\omega), \omega) \leq \overline{H}(p) + \widetilde\gamma(\delta),
\end{equation}
where $\widetilde\gamma$ is a modulus of continuity satisfying $\widetilde\gamma(\delta)\to 0$ as $\delta\to 0$.
\end{lem}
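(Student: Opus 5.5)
We follow the corresponding step in \cite{KRV06}: take $\hat v_\delta$ to be a rescaled mollification of the $v$ from Lemma \ref{Gardv}. Concretely, set $\lambda_\delta := (1+\gamma(\delta))^{-1}$ and $\hat v_\delta := \lambda_\delta v_\delta + (\lambda_\delta - 1)p$? This second term breaks the mean-zero property, so instead we take simply $\hat v_\delta := v_\delta$, the mollification \eqref{Mvcov}, and absorb the rescaling into the Hamiltonian through Assumption ({\bf B}). Mean zero and curl-freeness are immediate: $\mathbf{E} v_\delta = \mathbf{E} v = 0$ by stationarity and Fubini, and $\mathfrak{D}\times v_\delta = \rho_\delta * (\mathfrak{D}\times v) = 0$.

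\textbf{Step 1: mollify the inequality.} Evaluate \eqref{eq:sublifted} at $\tau_y\omega$, multiply by $\rho_\delta(y)$ and integrate in $y$. The operator $\mathfrak{J}=\mathfrak{D}\cdot\mathfrak{R}$ commutes with translations and is linear, so the mollified nonlocal term is exactly $\mathfrak{J}v_\delta(\omega)$. This requires checking that $\mathfrak{J}v$ makes sense distributionally for $v\in L^\beta$; this is where the divergence form and the local bound \eqref{eq:unifbddv} of Lemma \ref{Ecv} enter. We obtain
\[
\mathfrak{J}v_\delta(\omega) + \int \rho_\delta(y) H(p+v(\tau_y\omega),\tau_y\omega)\,\dd y \le \overline H(p).
\]

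\textbf{Step 2: freeze the environment.} For $|y|\le\delta$, Assumption ({\bf B}) applied to the environment $\tau_y\omega$ gives
\[
H(q,\tau_y\omega)\ge (1+\gamma(\delta))H\bigl((1+\gamma(\delta))^{-1}q,\omega\bigr)-M\gamma(\delta).
\]
Next, Jensen's inequality, using the convexity of $H(\cdot,\omega)$ and the fact that $\rho_\delta\,\dd y$ is a probability measure, yields
\[
\int\rho_\delta(y)(1+\gamma)H\bigl((1+\gamma)^{-1}(p+v(\tau_y\omega)),\omega\bigr)\dd y \ge (1+\gamma)H\bigl((1+\gamma)^{-1}(p+v_\delta(\omega)),\omega\bigr).
\]
Together with Step 1 this gives
\[
\mathfrak{J}v_\delta + (1+\gamma)H\bigl(\lambda_\delta(p+v_\delta),\omega\bigr)\le \overline H(p)+M\gamma(\delta).
\]

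\textbf{Step 3: undo the scaling.} Put $\hat v_\delta := \lambda_\delta^{-1}\bigl(\lambda_\delta(p+v_\delta)\bigr)-p$; this is the same as $v_\delta$, so nothing changes. Instead we rescale the whole inequality. Define $w:=\lambda_\delta v_\delta - (1-\lambda_\delta)p$, so that $p+w=\lambda_\delta(p+v_\delta)$. Since $\mathfrak{J}$ is linear, $\mathfrak{J}v_\delta=\lambda_\delta^{-1}\mathfrak{J}w$ (constants are killed). Dividing by $1+\gamma=\lambda_\delta^{-1}$ we get
\[
\lambda_\delta^{2}\,\lambda_\delta^{-1}\mathfrak{J}w + H(p+w,\omega)\le \lambda_\delta\bigl(\overline H(p)+M\gamma\bigr).
\]
The coefficient of $\mathfrak{J}w$ is not $1$, and $w$ has mean $-(1-\lambda_\delta)p\neq 0$. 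Both defects are of size $O(\gamma(\delta))$. The \emph{main obstacle} is eliminating them: either using a two-sided version of ({\bf B}), or rescaling $H(p+\cdot)$ around $p$ rather than around $0$ (apply ({\bf B}) to $q=p+v$ but with convexity centered so the shift is absent), or accepting $\hat v_\delta=v_\delta$ and bounding the convexity defect
\[
H(p+v_\delta)-(1+\gamma)H\bigl(\lambda_\delta(p+v_\delta)\bigr)\le \gamma\bigl(c_2+|\overline H|\bigr)
\]
via the convexity inequality $H(q)\le \lambda H(q/\lambda)+(1-\lambda)H(0)$ with $\lambda=\lambda_\delta$. I would try this last option first: it gives $H(p+v_\delta)\le(1+\gamma)H(\lambda_\delta(p+v_\delta))+\gamma\lambda_\delta \sup_\omega H(0,\omega)$. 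Here $H(0,\omega)\le c_4$, so the defect is $O(\gamma)$ uniformly. This gives \eqref{IeHd} with $\hat v_\delta=v_\delta$ and $\widetilde\gamma(\delta)=(M+c_4)\gamma(\delta)$.
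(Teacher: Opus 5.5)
\textbf{There is a genuine gap in the step you use to finish.} Your Steps 1 and 2 (mollify \eqref{eq:sublifted}, freeze the environment with ({\bf B}), then apply Jensen at the frozen $\omega$) are the paper's argument, except that the paper runs them at a different slope.

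The inequality $H(q)\le\lambda H(q/\lambda)+(1-\lambda)H(0)$ bounds $H(q)$ by $H$ at the \emph{larger} point $q/\lambda$. Substituting $q=\lambda_\delta(p+v_\delta)$ gives $(1+\gamma)H\bigl(\lambda_\delta(p+v_\delta),\omega\bigr)\le H(p+v_\delta,\omega)+\gamma H(0,\omega)$, which is the reverse of what you need. Convexity gives no upper bound of $H(q)$ by $(1+\gamma)H\bigl(q/(1+\gamma)\bigr)+O(\gamma)$. For $H(q)=|q|^\beta$ (so $H(0)=0$), your claimed inequality reads $|q|^\beta\le(1+\gamma)^{1-\beta}|q|^\beta$, which is false for every $q\neq0$. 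The true defect $(1-(1+\gamma)^{1-\beta})|q|^\beta\approx(\beta-1)\gamma|q|^\beta$ is unbounded in $q$. Since $|v_\delta|$ is only bounded by the constant $C_\delta$ of \eqref{eq:vhatbdd}, which degenerates as $\delta\to0$, the choice $\hat v_\delta=v_\delta$ gives no error $\widetilde\gamma(\delta)\to0$.

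The missing idea is already visible in your Step 3, which you abandoned after an arithmetic slip. With $w=\lambda_\delta v_\delta-(1-\lambda_\delta)p$ you have $\mathfrak{J}w=\lambda_\delta\mathfrak{J}v_\delta$. Dividing by $1+\gamma=\lambda_\delta^{-1}$ therefore gives $\mathfrak{J}w+H(p+w,\omega)\le\lambda_\delta\bigl(\overline H(p)+M\gamma\bigr)$, with coefficient exactly one, not $\lambda_\delta$. The only defect is the mean $-(1-\lambda_\delta)p$. This just says you have built a super-corrector for slope $\lambda_\delta p$ instead of $p$.

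The paper compensates in advance by applying Lemma \ref{Gardv} at the inflated slope $p_\delta=(1+\gamma(\delta))p$. Your Steps 1--2 then give
\[
\mathfrak{J}v_\delta(\omega)+(1+\gamma(\delta))\,H\Bigl(p+\frac{v_\delta(\omega)}{1+\gamma(\delta)},\omega\Bigr)\le\overline H(p_\delta)+M\gamma(\delta).
\]
So $\hat v_\delta:=v_\delta/(1+\gamma(\delta))$ is mean-zero and curl-free, being a constant multiple of $v_\delta$. By linearity of $\mathfrak{J}$ it satisfies $\mathfrak{J}\hat v_\delta+H(p+\hat v_\delta,\omega)\le\bigl(\overline H(p_\delta)+M\gamma(\delta)\bigr)/(1+\gamma(\delta))$. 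The cost is $\overline H(p_\delta)$ in place of $\overline H(p)$. This is absorbed into $\widetilde\gamma(\delta)$ by the continuity of the finite convex function $\overline H$ (Proposition \ref{prop:propolL}).
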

\begin{proof} Let $\gamma$ and $M$ be the modulus of continuity and the bounding constant in \eqref{AHd} in assumption ({\bf B}). Given $p \in \R^n$ and $\delta\in (0,1)$, let $p_\delta = (1+\gamma(\delta))p$. For $p_\delta$ we apply Lemma \ref{Gardv} and find a random vector $v$ associated to $p_\delta$ which satisfies the inequality \eqref{eq:sublifted} for $p$ replaced by $p_\delta$. Then we integrate \eqref{eq:sublifted} against the mollifier $\rho_\delta$ as in \eqref{Mvcov}, and, find
\begin{equation*}
     \int_{\R^n} \mathfrak{J} v(\tau_y \omega) \rho_\delta(y)\,\dd y + \int_{B_1(0)}H(p_\delta + v(\tau_{\delta y}\omega), \tau_{\delta y}\omega)\rho(y)\,\mathrm{d}y \leq \overline{H}(p_\delta).
\end{equation*}
For the first integral, since $\mathfrak{J}$ commutes with $\{\tau_x\}$ and is a linear operator, this term becomes $\mathfrak{J}v_\delta$. For the second integral, we apply the inequality \eqref{AHd}. This leads to
\begin{equation*}
    \mathfrak{J} v_{\delta}(\omega) + (1+\gamma(\delta))\int_{B_1(0)}H((1+\gamma(\delta))^{-1}(p_\delta + v(\tau_{\delta y}\omega)), \omega)\rho(y)\,\mathrm{d}y \leq \overline{H}(p_\delta) + M\gamma(\delta).
\end{equation*}
Let $\hat{v}_{\delta}= v_{\delta}/(1+ \gamma(\delta))$, then it still satisfies the mean-zero and curl-free conditions. In view of the linearity of $\mathfrak{J}$ and the convexity of $H$, we check that $\hat{v}_{\delta}$ also satisfies
\begin{equation*}
    \mathfrak{J}\hat{v}_{\delta}(\omega) + H(p + \hat{v}_{\delta}, \omega) \leq \frac{\overline{H}(p_\delta) + M\gamma(\delta)}{1+\gamma(\delta)},
\end{equation*}
Finally, by the continuity of $\overline{H}(p)$ in $p$, we find easily a modulus of continuity $\widetilde\gamma: (0,1)\to [0,\infty)$ so that \eqref{IeHd} holds.
\end{proof}

\begin{rem} From the proof we see that the obtained $\hat{v}_\delta$ is regular in the sense that the vector field $x\mapsto \hat{v}_\delta(\tau_x \omega)$ is smooth. By abuse of notation, let $\hat{v}(x,\omega)$ denote this vector field, then $\hat{v}(\cdot,\omega)$ is curl-free, and \eqref{IeHd} is saying
\begin{equation}
\label{eq:vdeleq-1}
    \mathscr{J}\hat{v}_\delta(y,\omega) + H(p+\hat{v}_\delta(y,\omega),y,\omega) \le \ol{H}(p) + \widetilde\gamma(\delta).
\end{equation}
\end{rem}

Without assumption ({\bf B}), we still have the following alternative of Lemma \ref{Appv}.

\begin{lem}\label{vVcompaison}
Assume that the assumptions $(\bf{A1})$ and $(\bf{A2})$ hold. Suppose that $p\in\mathbb{R}^{n}$, $\lambda \in \mathbb{R}$, $v \in L^{\beta}(\Omega;\R^n)$ satisfy $\mathbf{E}[v] = 0$, $\mathfrak{D}\times v = 0$, and
\begin{equation}\label{supersol11}
    \mathfrak{J} v(\omega) + H(p + v(\omega), \omega) \leq \lambda.
\end{equation}
Let $v_\delta$ and $V_{\delta}$ be given by  \eqref{Mvcov} and \eqref{Mpot} respectively. Then, given a control $c(t,x)\in \mathscr{C}$, there exists a modulus of continuity $r_{c}(\delta)$ depending on $c$, i.e., $r_c(\delta) \to 0$ as $\delta \to 0$, such that the random processes
\begin{equation*}
    X_{t} := x + \int^{t}_{0} c(s,X_{s}) \mathrm{d}s + L^{\alpha}_{t} \quad\text{and}\quad \xi(t) := \int^{t}_0 L(c(s,X_{s}),\tau_{X_{s}}\omega)\mathrm{d}s
\end{equation*}
satisfy
\begin{align}\label{El44}
    \langle p, X_{t} - x \rangle - t\lambda - \xi(t) \leq & V_{\delta}(x,\omega) - V_{\delta}(X_{t},\omega) + \langle p, L^{\alpha}_{t} \rangle + r_{c}(\delta)t \nonumber\\
    & +\int_{0}^{t} \int_{\mathbb{R}^n} [V_{\delta}(X_{s^{-}}+ y,\omega)-V_{\delta}(X_{s^{-}},\omega)]\widetilde{N}(\mathrm{d}s,\,\mathrm{d}y),
\end{align}
and, for all $s>0$ and $z\in \R^n$,
\begin{equation}
    \label{eq:l54-main}
\mathfrak{J}v_\delta(\tau_z \omega) + \langle c(s,z),p+v_\delta(\tau_z \omega) \rangle \le \ol{H}(p) + L(c(s,z),\tau_z\omega) + r_c(\delta).
\end{equation}
\end{lem}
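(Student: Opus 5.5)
Two claims are at stake: the pointwise inequality \eqref{eq:l54-main}, and the pathwise bound \eqref{El44}, which follows from it by It\^o's formula. I will first mollify \eqref{supersol11}, then use Young's inequality, and finally handle the error coming from the convex term $H$. (The statement has $\ol{H}(p)$ in \eqref{eq:l54-main}; I read this as $\lambda$, which is what \eqref{El44} needs.)

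\textbf{Step 1: mollify and apply Young's inequality.} Integrate \eqref{supersol11} at $\tau_{y}\tau_z\omega$ against $\rho_\delta(y)\,\dd y$. Because $\mathfrak{J}$ is linear and commutes with translations, the first term becomes $\mathfrak{J}v_\delta(\tau_z\omega)$. This gives
\[
\mathfrak{J}v_\delta(\tau_z\omega)+\int \rho_\delta(y)\,H(p+v(\tau_{z+y}\omega),\tau_{z+y}\omega)\,\dd y\le\lambda .
\]
For the Hamiltonian term, use the Fenchel inequality $H(q,\omega')\ge\langle c,q\rangle-L(c,\omega')$ with $c=c(s,z)$ fixed. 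Then
\[
\int\rho_\delta(y) H(p+v(\tau_{z+y}\omega),\tau_{z+y}\omega)\,\dd y \ge \langle c,p+v_\delta(\tau_z\omega)\rangle-\int\rho_\delta(y)L(c,\tau_{z+y}\omega)\,\dd y .
\]
Since $c$ is bounded, say $|c|\le \ell$, assumption (\textbf{A2}) for $L$ gives $\int\rho_\delta(y)L(c,\tau_{z+y}\omega)\,\dd y\le L(c,\tau_z\omega)+r_c(\delta)$. Here $r_c(\delta)$ is the uniform-continuity modulus of $L$ on $\{|v|\le\ell\}$. This proves \eqref{eq:l54-main} with $\lambda$ on the right, uniformly in $s$, $z$ and a.e.\ $\omega$. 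The key point is that Fenchel's inequality is linear in $q$, so it passes through the averaging without any convexity-versus-translation condition like (\textbf{B}). This is why $r_c$ depends on $c$ only through $\|c\|_\infty$.

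\textbf{Step 2: It\^o's formula.} The field $V_\delta(\cdot,\omega)$ is smooth with $\nabla V_\delta=\widetilde v_\delta$ bounded by \eqref{eq:vhatbdd}. Its second derivatives are $\rho_\delta$-derivatives of $\widetilde v$, and these are bounded by the uniform local $L^\beta$ bound \eqref{eq:unifbddv} of Lemma \ref{Ecv}. Hence $V_\delta\in C^2$ with bounded first and second derivatives, and the It\^o formula \eqref{eq:ItoC2} applies. I should check that linear growth of $V_\delta$ suffices for the large jumps; this works because $\alpha>1$, so $\int_{|y|\ge1}|y|\,\nu(\dd y)<\infty$. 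The formula gives
\[
V_\delta(X_t)-V_\delta(x)=\int_0^t\langle c(s,X_s),\widetilde v_\delta(X_s)\rangle\,\dd s+\int_0^t\mathscr{L}V_\delta(X_{s^-})\,\dd s+M_t ,
\]
where $M_t$ is the compensated jump integral. Use $\mathscr{L}V_\delta=\mathscr{J}\widetilde v_\delta=\mathfrak{J}v_\delta(\tau_{\cdot}\omega)$ from \eqref{eq:JLRn} and \eqref{eq:JJ}. Then evaluate \eqref{eq:l54-main} at $z=X_s$ (replacing $X_{s^-}$ by $X_s$ is harmless off a Lebesgue-null set of times) and integrate in $s$:
\[
V_\delta(X_t)-V_\delta(x)+\int_0^t\langle c,p\rangle\,\dd s\le M_t+\lambda t+\xi(t)+r_c(\delta)t .
\]

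\textbf{Step 3: rearrange.} Since $X_t-x=\int_0^t c\,\dd s+L^\alpha_t$, we have $\int_0^t\langle c,p\rangle\,\dd s=\langle p,X_t-x\rangle-\langle p,L^\alpha_t\rangle$. Substituting this and rearranging gives \eqref{El44} exactly.

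\textbf{Main obstacle.} I expect the hardest part to be justifying the regularity and growth of $V_\delta$ needed for It\^o's formula and for the identity $\mathscr{L}V_\delta=\mathscr{J}\nabla V_\delta$ almost surely in $\omega$. Concretely, this means bounded Hessian, linear growth, and the non-local integral converging at infinity. All of these should come from Lemma \ref{Ecv} together with the derivatives of $\rho_\delta$ and the condition $\alpha>1$.
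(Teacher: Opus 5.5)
Your proof is correct and uses the same ingredients as the paper's proof. You mollify \eqref{supersol11}, apply the Fenchel inequality with $c(s,z)$ frozen inside the $\rho_\delta$-average, absorb the translation error of $L$ into $r_c(\delta)$ via (\textbf{A2}), and combine this with It\^o's formula and $\mathscr{L}V_\delta=\mathscr{J}\widetilde v_\delta$. The only difference is ordering: you prove \eqref{eq:l54-main} first and integrate it along the path to obtain \eqref{El44}, whereas the paper does the It\^o computation first and then repeats the pointwise argument. Your reading of $\ol{H}(p)$ as $\lambda$ in \eqref{eq:l54-main} is the right one, since the paper only applies the lemma with $\lambda=\ol{H}(p)$. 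Your justification of It\^o's formula, via the bounded Hessian of $V_\delta$ and $\int_{|y|\ge1}|y|\,\nu(\dd y)<\infty$, fills in a step the paper leaves implicit.
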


\begin{proof}
We note that, almost surely, $\widetilde{v}_\delta(x,\omega) = v_\delta(\tau_x \omega)$ is a smooth vector field, $V_\delta$ is a smooth function and $\nabla V_{\delta}(x,\omega) = \widetilde{v}_\delta(x,\omega)$. Applying the It\^{o}'s formula to the smooth function $V_\delta(x,\omega)+\langle p,x\rangle$, we get
\begin{align}\label{ItoVp}
     & V_{\delta}(X_{t},\omega) - V_{\delta}(x,\omega) + \langle p, X_{t} - x \rangle \nonumber \\
    =& \int_{0}^{ t} \langle c(s, X_{s}), p+ \widetilde{v}_{\delta}(X_{s}, \omega)\rangle \mathrm{d}s  + \langle p, L^{\alpha}_{t} \rangle + \int_{0}^{t} \int_{\mathbb{R}^n} [V_{\delta}(X_{s^{-}}+ y)-V_{\delta}(X_{s^{-}})]\widetilde{N}(\mathrm{d}s,\,\mathrm{d}y) \nonumber \\
     & +\int_{0}^{t} \mathscr{L} V_{\delta}(X_{s^{-}},\omega)\mathrm{d}s \nonumber \\
     =& \int_{0}^{ t} \langle c(s, X_{s}), p+ \widetilde{v}_{\delta}(X_{s}, \omega)\rangle \mathrm{d}s + \langle p, L^{\alpha}_{t} \rangle + \int_{0}^{t} \int_{\mathbb{R}^n} [V_{\delta}(X_{s^{-}}+ y)-V_{\delta}(X_{s^{-}})]\widetilde{N}(\mathrm{d}s,\,\mathrm{d}y) \nonumber \\
     & + \int^{t}_{0} \mathscr{J} \widetilde{v}_{\delta}(X_{s},\omega)\mathrm{d}s.
\end{align}
In the third line above we used the fact $\mathscr{L} V_\delta = \mathscr{J}( \widetilde{v}_\delta)$. Using the duality relation $ \langle p, q\rangle \leq H(p,\omega) + L(q, \omega)$, $p,q \in\mathbb{R}^n$, we have
\begin{equation}
\label{eq:l54-1}
\begin{aligned}
    \langle c(s, X_{s}), p+ \widetilde{v}_{\delta}(X_{s},\omega)\rangle \
    &=  \int_{\mathbb{R}^n} \langle c(s, X_{s}), p+ \widetilde{v}(y,\omega)\rangle\rho_{\delta}(X_{s}-y)\,\mathrm{d}y \\
    &\leq  \int_{\mathbb{R}^n} (L(c(s, X_{s}), \tau_{y}\omega)+H(p+ \widetilde{v}(y,\omega), \tau_{y}\omega))\rho_{\delta}(X_{s}-y)\,\mathrm{d}y.
\end{aligned}
\end{equation}
Plugging the above into \eqref{ItoVp}, and using \eqref{supersol11}, we have
\begin{equation*}
\begin{aligned}
     & V_{\delta}(X_{t},\omega) - V_{\delta}(x,\omega) + \langle p, X_{t} - x \rangle \nonumber \\
     \leq  & \langle p, L^{\alpha}_{t} \rangle + \int_{0}^{t}\int_{\mathbb{R}^n} L(c(s, X_{s}), \tau_{y}\omega)\rho_{\delta}(X_{s}-y)\,\mathrm{d}y\dd s \nonumber \\
     & + \int_{0}^{t} \int_{\mathbb{R}^n} [V_{\delta}(X_{s^{-}}+ y)-V_{\delta}(X_{s^{-}})]\widetilde{N}(\mathrm{d}s,\mathrm{d}y) \nonumber \\
     & + \int_{0}^{t}\int_{\mathbb{R}^n} [\mathscr{J} \widetilde{v}(y,\omega) + H(p + \widetilde{v}(y,\omega), \tau_y\omega)]\rho_{\delta}(X_{s}-y)\,\mathrm{d}y \nonumber\\
     \leq & t\lambda  + \langle p, L^{\alpha}_{t} \rangle + \int_{0}^{t}\int_{\mathbb{R}^n} L(c(s, X_{s}), \tau_{y}\omega)\rho_{\delta}(X_s-y)\,\mathrm{d}y\dd s \nonumber\\
     & + \int_{0}^{t} \int_{\mathbb{R}^n} [V_{\delta}(X_{s^{-}}+ y)-V_{\delta}(X_{s^{-}})]\widetilde{N}(\mathrm{d}s,\mathrm{d}y).
\end{aligned}
\end{equation*}
Hence we get
\begin{align}
    &  \langle p, X_{t} - x \rangle - t\lambda - \xi(t) \nonumber \\
    \leq & V_{\delta}(x,\omega)- V_{\delta}(X_{t},\omega) + \langle p, L^{\alpha}_{t} \rangle + \int_{0}^{t} \int_{\mathbb{R}^N} [V_{\delta}(X_{s^{-}}+ y)-V_{\delta}(X_{s^{-}})]\widetilde{N}(\mathrm{d}s,\,\mathrm{d}y) \nonumber \\
    & + \int^{t}_{0}\int_{\mathbb{R}^n} (L(c(s, X_{s}), \tau_{y}\omega)- L(c(s, X_{s}), \tau_{X_{s}}\omega))\rho_{\delta}(X_{s}-y)\,\mathrm{d}yds. 
\end{align}
For the last term, for each $s\in (0,t)$, the integral over $\R^n$ is reduced to the integral in $B_\delta(X_{s})$. Given $c\in \mathscr{C}$, $\|c\|_{L^\infty}$ is finite. Using the uniform continuity of $L(q,\tau_{x}\omega)$ in $x$ (and locally in $q$), i.e., by Assumption ({\bf{A2}}), we can bound the last term by
\begin{align}\label{Remainder}
        & \int^{t}_{0}\int_{\mathbb{R}^n} (L(c(s, X_{s}), \tau_{y}\omega)- L(c(s, X_{s}), \tau_{X_{s}}\omega))\rho_{\delta}(X_{s}-y)\,\mathrm{d}yds \nonumber \\
        \leq & t\sup_{|q| \leq \|c\|_{L^{\infty}}}\sup_{\omega}\sup_{|y| \leq \delta}|L(q,\tau_{y}\omega) - L(q,\omega)| := tr_{c}(\delta).
\end{align}
This proves \eqref{El44}.

To prove \eqref{eq:l54-main}, we apply the inequality \eqref{supersol11} at each $\tau_y\omega$, $y\in \R^n$, and multiply the inequality by $\rho_\delta(z-\cdot)$ and integrate. This leads to
\begin{equation*}
    \mathfrak{J}v_\delta(\tau_z\omega) \le \ol{H}(p) - \int_{\R^n} H(p+v_\delta(\tau_y\omega),\tau_y\omega)\rho_\delta(z-y)\,\dd y.
\end{equation*}
Combine this with \eqref{eq:l54-1} (which still holds for $X_{s^-}$ replaced by $z$) to get
\begin{equation*}
\begin{aligned}
    \mathfrak{J}v_\delta(\tau_z\omega) &+ \langle c(s,z),p+v_\delta(\tau_z\omega)\rangle \le \ol{H}(p) + L(c(s,z),\tau_z\omega) \\
    &+ \int_{\R^n} (L(c(s,z),\tau_y\omega)-L(c(s,z),\tau_z\omega))\rho_\delta(z-y)\,\dd y.
    \end{aligned}
\end{equation*}
Using the modulus of continuity $r_c$ defined in \eqref{Remainder}, we see the proof is hence completed.
\end{proof}

%%%%%%%%%%%%%%
\section{Proof of the Upper Bound}
\label{sec:proofUB}

In this section, we complete the proof of Theorem \ref{thm:main}. The assumptions ({\bf A1})({\bf A2}) and ({\bf A3}$'$) are invoked. The lower bound is already proved in Section \ref{sec:lowerbdd}. Under each of the two different extra assumptions, Assumptions $({\bf B})$ and $({\bf C})$, we establish the upper bound which then completes the proof of the main theorem.

%Note that, in view of the regularity in $(t,x)$ of $u_\eps$ and by the same argument in the proof of Theorem \ref{LBR} for the lower bound, to complete the proof of Theorem \ref{thm:main} it suffices to establish the upper bound for each fixed $x$, that is, to prove $\mathbf{P}$-almost surely, 
%\begin{equation*}
%    \limsup_{\epsilon \to 0} \sup_{t\in [0,T]} u_\epsilon(t,x)-\overline{u}(t,x) \le 0.
%\end{equation*}
We have the following representation formulas for $u_\eps(t,x)$ and $\overline{u}(t,x)$:
\begin{equation*}
    \begin{aligned}
        &u_\epsilon(t,x) = \sup_{c\in \mathscr{C}^*} \mathbb{E}^{Q^{c,\epsilon}_x} \left[u_0(X^\epsilon_t) - \xi^c_\epsilon(t)\right], \qquad \xi^c_\epsilon(t) = \int_0^t L(c(\epsilon^{-1}s, \epsilon^{-1}X^\epsilon_s), \tau_{\epsilon^{-1}X^\epsilon_s} \omega)\,\mathrm{d}s.\\
        & \overline{u}(t,x) = \sup_{y\in \R^n} \left[u_0(y) - t\overline{L}\left(\frac{y-x}{t}\right)\right].
    \end{aligned}
\end{equation*}
Here $X^\epsilon_t$ is the jump-diffusion process \eqref{eq:Xepsc} determined by $c$ starting from $x$. Note that by Lemma \ref{Cset} the control set $\mathscr{C}$ in the original control formula (\ref{vf}) is already replaced by the subset $\mathscr{C}^{\ast}$ defined in \eqref{sCset}. As remarked before, we may assume $u_0$ is uniformly bounded and Lipschitz. 

The difference between $u_\eps$ and $\ol u$ satisfies
\begin{equation}
\label{eq:diffform}
      u_{\epsilon}(t,x,\omega) - \ol u(t,x)
    =  \sup_{c\in\mathscr{C}^{\ast}} \mathbb{E}^{Q^{c,\epsilon}_{x} }\left( u_{0}(X^{\epsilon}_{t}) - \xi^c_{\epsilon}(t) \right) - \sup_{y}(u_{0}(y)- t \overline{L}((y-x)/t)). 
%    \leq & \sup_{c\in\mathscr{C}^{\ast}} \mathbb{E}^{Q^{c,\epsilon}_{x} } \left( t \overline{L}((X^\eps_t-x)/t)) - \xi^c_{\epsilon}(t) \right)\\
 %   = & \sup_{c\in\mathscr{C}^{\ast}} \mathbb{E}^{Q^{c,\epsilon}_{x} }\Big(\sup_{p\in\mathbb{R}^n}[\langle p, X^{\epsilon}_{t} -x \rangle - t\overline{H}(p)]- \xi^c_{\epsilon}(t)\Big).
\end{equation}

\medskip 

In view of the condition \eqref{A2L} from assumption ({\bf{A}}), for every control $c(t,x)\in \mathscr{C}^{\ast}$, the solution $X^\eps_t$ of the SDE \eqref{eq:Xepsc} and the cost $\xi^c_\epsilon$ along it defined by \eqref{eq:xiepsdef} satisfy
\begin{equation}
\begin{aligned}
&\inf_{x,\omega} \xi^c_{\epsilon} = \inf_{x,\omega} \int^{t}_{0} L(c(\epsilon^{-1}s,\epsilon^{-1}X^{\epsilon}_{s}),\tau_{\epsilon^{-1}X^{\epsilon}_{s}\omega})\mathrm{d}s \geq -c_{4}t, \\
&\sup_{x,\omega}\mathbb{E}^{Q^{c,\epsilon}_{x}}|\xi^c_{\epsilon}(t)|\leq \sup_{x,\omega}\mathbb{E}^{Q^{c,\epsilon}_{x}} c'_3\int^{t}_{0} |c(s,\epsilon^{-1}X^{\epsilon}_{s})|^{\beta'} \mathrm{d}s + c_2 t \leq  C(t + \epsilon^{1-\frac1\alpha} t^{\frac{1}{\alpha}}).
\end{aligned}
\end{equation}
Combining this with moment estimates for $L^{\alpha}_{t}$, we deduce that for every $1 < \gamma < (\beta' \wedge \alpha)$, and $\ell, T>0$,
\begin{align*}
    & \sup_{|x| \leq l}\sup_{0\leq t \leq T}\sup_{c\in \mathscr{C}^{\ast}}\sup_{\epsilon\in (0,1]}\mathbb{E}^{Q^{c,\epsilon}_{x}} |X^{\epsilon}_{t} - x|^{\gamma} \nonumber \\
    \leq & \sup_{|x| \leq l}\sup_{0\leq t \leq T}\sup_{c\in \mathscr{C}^{\ast}}\sup_{\epsilon\in (0,1]}\mathbb{E}^{Q^{c,\epsilon}_{x}}\left|\int^{t}_{0} c(s, \epsilon^{-1}X^{\epsilon}_{t}) \mathrm{d}s\right|^{\gamma} + \mathbb{E}\,|\epsilon^{1-\frac{1}{\alpha}}L^{\alpha}_t|^{\gamma} \nonumber \\
    < & \infty.
\end{align*}
Thus $|X^{\epsilon}_{t}|$ is uniformly integrable with respect to the probability measures in
\begin{equation}
\label{eq:UImeasset}
    \{Q^{c,\epsilon}_{x}: |x| \leq l, t\in [0,T], \epsilon\in (0,1], c\in \mathscr{C}^{\ast}\}.
\end{equation}
In view of the formula \eqref{eq:diffform} for $u_\eps-\ol{u}$, the Lipschitz boundedness of $u_0$, the locally uniform bound of $\ol{u}$, the uniform lower bound on $\xi^c_\eps$ above and the uniform integrability of $|X^\eps_t|$, we conclude that, for every $\delta >0$, there exists a constant $M(\delta)>0$ such that for any fixed $t \in [0,T]$, $x\in B_l(0)$ and for almost all $\omega$, 
\begin{equation*}
\begin{aligned}
     u_{\epsilon}(t,x,\omega) - \ol{u}(t,x) &\leq  \sup_{c\in\mathscr{C}^{\ast}} \mathbb{E}^{Q^{c,\epsilon}_{x} }\left[\mathbf{1}_{|X^\eps_t-x|\le M_\delta} \left(u_0(X^\eps_t)-\xi^c_\eps(t) - \ol{u}(t,x)\right)\right] + \delta\\
     &\leq  \sup_{c\in\mathscr{C}^{\ast}} \mathbb{E}^{Q^{c,\epsilon}_{x} }\left[\mathbf{1}_{|X^\eps_t-x|\le M_\delta} \left(-\xi^c_\eps(t) + t\ol{L}\left(\frac{X^\eps_t-x}{t}\right)\right)\right] + \delta\\
    &\leq  \sup_{c\in\mathscr{C}^{\ast}} \mathbb{E}^{Q^{c,\epsilon}_{x} }[(\sup_{p\in\mathbb{R}^n}[\langle p, X^{\epsilon}_{t} -x \rangle - t\overline{H}(p)]- \xi^c_{\epsilon}(t))\mathbf{1}_{ |X^{\epsilon}_{t}-x| \leq M(\delta)}] + \delta.
    \end{aligned}
\end{equation*}
Using the super-linear growth of $\ol{H}$ in $|p|$ again, the supremum over $p\in \R^n$ above is achieved in a compact set. Therefore, in order to obtain the upper bound, it is enough to show that for each fixed $p\in\mathbb{R}^n$, and arbitrary $l, T, \eta>0$,
\begin{equation}\label{uppE}
    \lim_{\epsilon\to 0}\sup_{c\in\mathscr{C}^{\ast}} \sup_{|x| \leq l} \sup_{0 \leq t \leq T} Q^{c, \epsilon}_{x} \{ \langle p, (X^{\epsilon}_{t} -x) \rangle - t\overline{H}(p)- \xi^c_{\epsilon}(t) \geq \eta \} = 0.
\end{equation}

In the rest of this section, we establish the above estimate and hence complete the proof of Theorem \ref{thm:main} under assumption ({\bf B}) and assumption ({\bf C}) respectively. For simplicity of notation, we suppress the dependence on $c\in \mathscr{C}^*$ in $\xi^c_\eps$ in the rest of the proof. %Under assumption ({\bf B}), we can apply Lemma \ref{Appv} which provides regularized $\hat{v}_\delta$ with bounded $L^\infty$
\medskip 

\subsection{Proof of the upper bound under assumption ({\bf B})}

For each fixed $\delta \in (0,1)$, we apply Lemma \ref{Appv} to find the random vector $\hat{v}_\delta$ satisfying \eqref{IeHd}. Note that $\hat{v}_\delta$ is obtained after mollification, and hence the associated scalar field $V_\delta(x,\omega)$, defined by \eqref{Mpot} with $v_\delta$ replaced by $\hat{v}_\delta$, is smooth and satisfies $\nabla V_\delta(x,\omega) = \hat{v}_\delta(\tau_x\omega)$. Moreover, \eqref{eq:vhatbdd} is satisfied by $\hat{v}_\delta$ and \eqref{ergoV} is satisfied by $V_\delta$. 

Let ${V}_{\delta,\epsilon}(x,\omega)= \epsilon V_{\delta}(\epsilon^{-1}x, \omega)$, then $\nabla V_{\delta,\epsilon}(x,\omega) = \hat{v}_{\delta}(\tau_{\epsilon^{-1}x}\omega)$, and for all $l>0$, almost surely we have
\begin{equation}\label{VE1}
    \lim_{\epsilon \to 0}\sup_{|x| \leq l} |V_{\delta,\epsilon}(x,\omega)| =0.
\end{equation}
Applying the rescaled It\^{o}'s formula \eqref{eq:rescaledIto}, we have
\begin{align}\label{E4D1}
     & V_{\delta,\epsilon}(X^{\epsilon}_{t},\omega) - V_{\delta,\epsilon}(x,\omega) \nonumber \\
    =& \int_{0}^{t} \langle \hat{v}_{\delta}(\tau_{\epsilon^{-1}X^{\epsilon}_{s}}\omega), c(s/\epsilon, X^{\epsilon}_s/\epsilon) \rangle \mathrm{d}s + \int_{0}^{t} \int_{\mathbb{R}^n} [V_{\delta,\epsilon}((X^{\epsilon}_{s^{-}}+y),\omega) - V_{\delta,\epsilon}(X^{\epsilon}_{s^{-}},\omega)]\widetilde{N}_\epsilon(\mathrm{d}s,  \,\mathrm{d}y)  \nonumber\\
    & + \epsilon^{\alpha - 1}\int_{0}^{t} \int_{\mathbb{R}^n}[V_{\delta,\epsilon}(X^{\epsilon}_{s^{-}}+  y,\omega) - V_{\delta,\epsilon}(X^{\epsilon}_{s^{-}},\omega)-\mathbf{1}_{B_1(0)}(y) \langle y,\hat{v}_{\delta}(\tau_{\epsilon^{-1}X^{\epsilon}_{s^-}}\omega)\rangle]\nu( \,\mathrm{d}y)\mathrm{d}s \nonumber \\
    = & \int_{0}^{ t} \langle \hat{v}_{\delta}(\tau_{\epsilon^{-1}X^{\epsilon}_{s}}\omega), c(s/\epsilon, X^{\epsilon}_{s}/\epsilon) \rangle \mathrm{d}s +   M^{\epsilon,\delta}_t + \epsilon^{\alpha - 1}\int_{0}^{t} \mathscr{L}V_{\delta,\epsilon}(X^{\epsilon}_{s},\omega)\mathrm{d}s,
\end{align}
where $\widetilde{N}_\eps$ is the compensated Poisson random measure of the rescaled L\'evy process $L^{\eps,\alpha}_\bullet = \eps L^\alpha_{\bullet/\eps}$, and in the third line we denote
\begin{equation}
\label{eq:Mbfsplit}
M^{\epsilon,\delta}_t
 :=\int_0^t\int_{\mathbb R^n}
\big[V_{\delta,\epsilon}(X^\epsilon_{s-}+y,\omega)
     -V_{\delta,\epsilon}(X^\epsilon_{s-},\omega)\big]
\widetilde N_\epsilon(\mathrm ds,\mathrm dy).
\end{equation}
We also have
\begin{align}
    \langle p, X^{\epsilon}_t - x \rangle = \int^{t}_{0} \langle p, c(s/\epsilon, X^{\epsilon}_{s}/\epsilon)\rangle \mathrm{d}s + \eps\langle p,  L^{\alpha}_{t/\eps} \rangle.
\end{align}
Note that $\eps^{\alpha-1}\mathscr{L}V_{\eps,\delta}(\cdot,\omega) = (\mathscr{L}V_\delta)(\cdot/\eps,\omega) = \mathscr{J}\hat{v}_\delta(\cdot/\eps,\omega)$. By \eqref{IeHd} of Lemma \ref{Appv}, or in the form \eqref{eq:vdeleq-1}, we have
\begin{equation*}
    \epsilon^{\alpha - 1}\mathscr{L}V_{\delta,\eps}(y,\omega) + H(p + \hat{v}_{\delta}(\tau_{\epsilon^{-1}y}\omega), \tau_{\epsilon^{-1}y}\omega) \leq \overline{H}(p) + \widetilde{\gamma}(\delta), \qquad y\in \R^n.
\end{equation*}
Note that $ \langle p, q\rangle \leq H(p,\omega) + L(q, \omega)$ for every $p,q \in\mathbb{R}^n$. It follows that
\begin{align}\label{E4D2}
    & \langle p, X^{\epsilon}_t - x \rangle + \int^{t}_{0} \langle \hat{v}_{\delta}(\tau_{\epsilon^{-1}X^{\epsilon}_{s}}\omega), c(s/\epsilon, X^{\epsilon}_{s}/\epsilon) \rangle \mathrm{d}s \nonumber \\
    = & \int^{t}_{0}\langle p+\hat{v}_{\delta}(\tau_{\epsilon^{-1}X^{\epsilon}_{s}}\omega), c(s/\epsilon, X^{\epsilon}_{s}/\epsilon)\rangle \mathrm{d}s + \epsilon\langle p,  L^{\alpha}_{t/\eps} \rangle \nonumber \\
    \leq & \int^{t}_{0} H(p+\hat{v}_{\delta}(\tau_{\epsilon^{-1}X^{\epsilon}_{s}}\omega),\tau_{X^{\epsilon}_{s}/\epsilon}\omega)\mathrm{d}s + \xi_{\epsilon}(t) + \epsilon\langle p,  L^{\alpha}_{t/\eps}\rangle \nonumber \\
    \leq & t\overline{H}(p)  - \epsilon^{\alpha -1}\int^{t}_{0}\mathscr{L}V_{\delta,\eps}(X^{\epsilon}_{s},\omega) \mathrm{d}s + \xi_{\epsilon}(t) + \epsilon\langle p,  L^{\alpha}_{t/\eps}\rangle.
\end{align}
Then together with (\ref{E4D1}), we obtain
\begin{align}\label{supersolC}
    \langle p, X^{\epsilon}_t - x \rangle - t(\widetilde{\gamma}(\delta)+\overline{H}(p))  - \xi_{\epsilon}(t)  \leq   V_{\delta,\epsilon}(x/\epsilon,\omega) -V_{\delta,\epsilon}(X^{\epsilon}_{t},\omega)  + \epsilon \langle p, L^{\alpha}_{t/\eps}\rangle + M_t^{\epsilon,\delta}.
\end{align}
Thus for every $\eta >0$,
\begin{align}\label{LE1}
    & Q^{c, \epsilon}_{x} \{ \langle p, (X^{\epsilon}_{t} -x) \rangle - t(\overline{H}(p)+\widetilde{\gamma}(\delta))- \xi_{\epsilon}(t) \geq \eta \} \nonumber \\
    \leq & Q^{c, \epsilon}_{x} \{|V_{\delta,\epsilon}(X^{\epsilon}_t,\omega) - V_{\delta,\epsilon}(x,\omega)| \geq \eta/2 \} + Q^{c, \epsilon}_{x}\left\{|M_t^{\epsilon,\delta}|+
    |\langle p,\eps L_{t/\eps}^{\alpha}\rangle| \geq \eta/2 \right\}.
\end{align}

To estimate the probability of the first set, since $|X^{\epsilon}_{t}|$ is uniformly integrable with respect to $\{Q^{c,\epsilon}_{x}: |x| \leq l, t\in [0,T], \epsilon\in [0,1], c\in \mathscr{C}^{\ast}  \}$, we use the locally uniform convergence \eqref{VE1}, and get
\begin{equation}
\label{LE2}
    \lim_{\epsilon\to 0}\sup_{c\in\mathscr{C}^{\ast}} \sup_{|x| \leq l} \sup_{0 \leq t \leq T} Q^{c, \epsilon}_{x}\{|V_{\delta,\epsilon}(X^{\epsilon}_t,\omega) - V_{\delta,\epsilon}(x,\omega)| \geq \eta \} = 0.
\end{equation}

For the probability of the second set, since $\mathbb{E}|\eps L^\alpha_{t/\eps}| \sim \eps^{1-\frac{1}{\alpha}}t^{\frac1\alpha}$ converges to zero (due to $\alpha >1$) uniformly for $t\in (0,T]$ and is independent of $(x,c)$, we only need to consider the contribution of $M^{\eps,\delta}_t$ defined in \eqref{eq:Mbfsplit}. We split $M^{\epsilon,\delta}$ as the sum of $M^{\epsilon,\delta,\flat}$ and $M^{\epsilon,\delta,\sharp}$ which, respectively, account for small-jumps in $\{|y|\leq1\}$ and big-jumps in $\{|y|>1\}$.

For the compensated small-jump martingale $M^{\eps,\delta,\flat}$, which has the expression
\begin{equation*}
    M^{\eps,\delta,\flat}_t = \int_0^t \int_{B_1(0)} V_{\delta,\eps}(X^\eps_{s^-}+y,\omega) - V_{\delta,\eps}(X^\eps_{s^-})\,\widetilde{N}_\eps(\dd s,\dd y),
\end{equation*}
we note the square integrability
\begin{equation*}
\begin{aligned}
    I^2_{t,x,\eps} &:= \int_0^t\int_{B_1(0)} \mathbb{E}|V_{\delta,\eps}(X^\eps_{s^-}+y,\omega) - V_{\delta,\eps}(X^\eps_{s^-})|^2 \eps^{\alpha-1}\nu(\dd y)\dd s \\
    &\le \|\hat{v}_\delta\|_{L^\infty(\Omega)}^2 \eps^{\alpha-1} t \int_{B_1(0)} |y|^2\nu(\dd y) < \infty,
    \end{aligned}
\end{equation*}
and, hence, can use It\^{o}'s isometry to get
\begin{align}\label{eq:MsmallB}
 \mathbb E^{Q_x^{c,\epsilon}}
 |M_t^{\epsilon,\delta,\flat}|^2 = I^2_{t,x,\eps} \le C_\delta \epsilon^{\alpha-1}t.
\end{align}

For $M^{\eps,\delta,\sharp}$, we write the
compensated integral as the Poisson integral minus its compensator and, using the fact that $|y|$ is $\nu(\dd y)$-integrable over $\{|y|>1\}$, and get 
\begin{equation}
\label{eq:MlargeB}
\begin{aligned}
 \mathbb E^{Q_x^{c,\epsilon}}
 |M_t^{\epsilon,\delta,\sharp}|
 %&\leq
 %2\epsilon^{\alpha-1}
 %\mathbf E^{Q_x^{c,\epsilon}}
 %\int_0^T\int_{|y|>1}
 %\left|V_{\delta,\epsilon}(z+y,\omega) -V_{\delta,\epsilon}(z,\omega)\right| 
 %\nu(\mathrm dy)\,\mathrm dr \nonumber\\
 \leq  2 \|\widehat v_\delta\|_{L^\infty(\Omega)} \epsilon^{\alpha-1} t
 \int_{|y|>1}|y|\nu(\mathrm dy)
 \leq C_{\delta}\epsilon^{\alpha-1}t.
\end{aligned}
\end{equation}
Note that the estimates \eqref{eq:MsmallB} and
\eqref{eq:MlargeB} are uniform with respect to $x$ and $c$. Applying Chebyshev's inequality, for every fixed $\delta$, we get
we get
\begin{equation}\label{LE3}
    \lim_{\epsilon\to 0}\sup_{c\in\mathscr{C}^{\ast}} \sup_{|x| \leq l} \sup_{0 \leq t \leq T} Q^{c, \epsilon}_{x}\left\{|M_t^{\epsilon,\delta}|+
    |\langle p,L_t^{\alpha,\epsilon}\rangle| \geq \eta \right\} = 0.
\end{equation}

Finally, combining \eqref{LE1} and \eqref{LE3} and sending $\delta\to 0$ (note then $\widetilde{\gamma}(\delta)\to 0$), we obtain
\begin{equation*}
    \lim_{\epsilon\to 0}\sup_{c\in\mathscr{C}^{\ast}} \sup_{|x| \leq l} \sup_{0 \leq t \leq T} Q^{c, \epsilon}_{x} \{ \langle p, (X^{\epsilon}_{t} -x) \rangle - t\overline{H}(p)- \xi_{\epsilon}(t) \geq \eta \} = 0.
\end{equation*}
This verifies \eqref{uppE} and completes the proof of Theorem \ref{thm:main} under Assumption ({\bf B}).

\medskip 

\subsection{The proof of upper bound under the assumption $(\bf{C})$}

Our goal is to prove \eqref{uppE}, for fixed positive numbers $l,T$ and $\eta$ and a fixed vector $p\in \R^n$, under assumption ({\bf C}). 

Given $c\in \mathscr{C}^*$, for $a$ and $b$ in $(1,\infty)$, for the SDE \eqref{eq:Xepsc} and the associated running cost $\xi_\eps$, define the stopping times
\begin{equation*}
\begin{aligned}
    &\tau_{\epsilon,a} := \inf \{t>0: \xi_{\epsilon}(t) \geq a \}, \quad \tau_{\epsilon,b} := \inf \{t>0: |X^{\epsilon}_{t} - x| \geq b\},\\
    &\tau_{\epsilon} = \tau_{\epsilon,a} \wedge \tau_{\epsilon,b}.
    \end{aligned}
\end{equation*}
By \eqref{A2L} and the estimate \eqref{sCset} satisfied for $c \in \mathscr{C}^*$ in assumption $(\bf A1)$, for $a$ sufficiently large,
\begin{equation}
    \lim_{\epsilon\to 0}\sup_{c\in\mathscr{C}^{\ast}} \sup_{|x| \leq l} \sup_{0 \leq t \leq T} Q^{c, \epsilon}_{x} \{\langle p, (X^{\epsilon}_{t} -x) \rangle - t\overline{H}(p)- \xi_{\epsilon}(t) \geq \eta, \tau_{\epsilon,a} < t\} = 0.
\end{equation}
Fix $a$ large so that the above is true. Estimating $X^\eps_t-x$ directly, we check that 
\begin{equation}
    \lim_{\epsilon\to 0}\sup_{c\in\mathscr{C}^{\ast}} \sup_{|x| \leq l} \sup_{0 \leq t \leq T} Q^{c, \epsilon}_{x} \{
\tau_{\eps,a}\ge t, \tau_{\epsilon,b} < t\}
\end{equation}
can be made arbitrarily small as $b \to \infty$. Hence, to prove \eqref{uppE} under Assumption ({\bf C}), it is sufficient to show that for each fixed $p\in\mathbb{R}^n$ and $l, T, \eta>0$, for $a$ and $b$ fixed sufficiently large, 
\begin{equation}
\label{43UP}
     \lim_{\epsilon\to 0}\sup_{c\in\mathscr{C}^{\ast}} \sup_{|x| \leq l} \sup_{0 \leq t \leq T} Q^{c, \epsilon}_{x} \{ \langle p, (X^{\epsilon}_{t} -x) \rangle - t\overline{H}(p)- \xi_{\epsilon}(t) \geq \eta, \tau_{\epsilon} \geq t \} = 0
\end{equation}

With the given $p\in \R^n$, we first apply Lemma \ref{Gardv} to obtain the random vector $v$ associated to $p$ which satisfies \eqref{eq:sublifted}. For each $\delta \in (0,1)$, let the random vector $v_\delta$ and the scalar field $V_\delta(x,\omega)$ be defined by \eqref{Mvcov} and by \eqref{Mpot} respectively. Finally, let $V_{\delta,\epsilon}(x,\omega)= \epsilon V_{\delta}(\epsilon^{-1}x, \omega)$. The following properties of $V_{\delta,\eps}$ will be helpful.

\begin{prop} For fixed $\eps,\delta \in (0,1)$ and $p\in \R^n$, let $V_{\delta,\eps}$ be defined as above. Under assumptions {\upshape ({\bf A})} and {\upshape ({\bf C})}, the following holds.
\begin{itemize}
\item[(1)] There exists some constant $C < \infty$ independent of $\eps,\delta$ such that 
\begin{equation}
\label{eq:lip-C}
|V_{\delta,\epsilon}(x,\omega)
-V_{\delta,\epsilon}(y,\omega)| 
\le C(\epsilon+|x-y|),
\end{equation}
\begin{equation}\label{eq:Vlinear-C}
    |V_{\delta,\epsilon}(x,\omega)|\leq C(\epsilon+|x|).
\end{equation}
\item[(2)] The following locally uniformly convergence holds almost surely in $\Omega$:
\begin{equation}
\label{eq:erg-delta}
    \lim_{\epsilon\to 0}\lim_{\delta\to 0}V_{\delta,\epsilon}(x,\omega) = \lim_{\epsilon\to 0}\lim_{\delta\to 0}\epsilon V_{\delta}(x/\epsilon,\omega)= 0. 
\end{equation}
\end{itemize}
\end{prop}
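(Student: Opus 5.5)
The plan is to use Morrey's inequality on the uniform local $L^\beta$ bound of Lemma \ref{Ecv}, which is available because $\beta>n$ under Assumption ({\bf C}).

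\emph{Step 1: unit-scale oscillation bound.} Recall that Lemma \ref{Gardv} gives $\mathfrak{D}\times v=0$. Consequently the distribution $\widetilde v(\cdot,\omega)$ is curl-free. Let $V(\cdot,\omega)$ be its potential, normalized by $V(0,\omega)=0$ and defined in the Sobolev sense. Then $V\in W^{1,\beta}_{\rm loc}$. By \eqref{eq:unifbddv}, $\|\nabla V\|_{L^\beta(B_2(z))}\le C_{2,p}$ uniformly in $z$ and $\omega$. Since $\beta>n$, Morrey's inequality gives
\begin{equation*}
|V(x)-V(y)|\le C|x-y|^{1-n/\beta}\quad\text{for } |x-y|\le 1.
\end{equation*}
The same estimate holds for $V_\delta$. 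Indeed, $\nabla V_\delta=\rho_\delta*\widetilde v$, and Jensen's inequality transfers the bound \eqref{eq:unifbddv} to $\widetilde v_\delta$ on balls of radius $2$ with the same constant, since $\delta<1$. Morrey then applies to $V_\delta$ with a constant independent of $\delta$. For $|x-y|\ge 1$ we chain along the segment from $x$ to $y$ through about $|x-y|$ unit steps. This yields
\begin{equation*}
|V_\delta(x)-V_\delta(y)|\le C(1+|x-y|)
\end{equation*}
with $C$ independent of $\delta$ and $\omega$.

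\emph{Step 2: rescaling.} Rescaling gives $|V_{\delta,\eps}(x)-V_{\delta,\eps}(y)|=\eps|V_\delta(x/\eps)-V_\delta(y/\eps)|\le C(\eps+|x-y|)$, which is \eqref{eq:lip-C}. Taking $y=0$ and using $V_{\delta,\eps}(0)=0$ gives \eqref{eq:Vlinear-C}.

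\emph{Step 3: the limit $\delta\to0$.} Since $\beta>n$, Morrey's inequality applies to differences: $\|V_\delta-V\|_{L^\infty(B_R)}\le C_R\|\rho_\delta*\widetilde v-\widetilde v\|_{L^\beta(B_{R+1})}$, using that both potentials vanish at $0$. The right-hand side tends to $0$ because $\widetilde v\in L^\beta_{\rm loc}$. Hence $\lim_{\delta\to0}V_{\delta,\eps}(x)=\eps V(x/\eps)$ locally uniformly.

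\emph{Step 4: sublinearity of $V$.} It remains to show $\eps V(x/\eps)\to0$ locally uniformly, almost surely. This is the main obstacle, because $v$ is only in $L^\beta$ rather than bounded, so \eqref{ergoV} does not apply directly. I would follow the standard argument (e.g. Lemma A.5 of \cite{MR2914944} or \cite{KRV06}). First, since $\mathbf{E}v=0$, the multiparameter ergodic theorem applied to $\widetilde v\in L^\beta$ gives, almost surely,
\begin{equation*}
\fint_{B_1(x_0)}\widetilde v(y/\eps)\,\dd y\to0 .
\end{equation*}
More generally, $\nabla_x[\eps V(x/\eps)]=\widetilde v(x/\eps)$ converges weakly to $0$ in $L^\beta_{\rm loc}$. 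Second, by Step 2 the family $W_\eps(x):=\eps V(x/\eps)$ satisfies $|W_\eps(x)-W_\eps(y)|\le C(\eps+|x-y|)$ and $W_\eps(0)=0$. Because of the additive $\eps$, an Arzelà--Ascoli-type argument still yields locally uniform subsequential limits $W$, which are Lipschitz with $W(0)=0$. Third, the weak convergence of the gradients identifies $\nabla W=0$, so $W\equiv0$. Since every subsequential limit is $0$, the full family converges to $0$, which proves \eqref{eq:erg-delta}. The delicate point is that the convergence set of full probability must be chosen independently of the ball. I would handle this by intersecting over countably many rational balls, as in the proof of Theorem \ref{LBR}.
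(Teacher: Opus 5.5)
Your proposal is correct and follows essentially the same route as the paper. Part (1) goes via Morrey's inequality applied to the $\delta$-uniform local $L^\beta$ bound of Lemma \ref{Ecv}, then chaining and rescaling; part (2) passes $\delta\to0$ to the potential $V$ of $\widetilde v$ and uses its sublinearity, which comes from $\beta>n$ and the ergodic theorem. The only differences are minor: you pass $\delta\to0$ by applying Morrey to $V_\delta-V$ rather than through the paper's identity $\widetilde V_\delta=V_\delta+\widetilde V_\delta(0)$ with $\widetilde V_\delta=V*\rho_\delta$, and you prove the sublinearity (Arzel\`a--Ascoli plus weak convergence of $\widetilde v(\cdot/\eps)$ to $0$) where the paper simply cites \cite{MR2914944}.
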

\begin{proof}By Lemma \ref{Ecv} and $\nabla V_\delta=\rho_\delta*\widetilde v$, we can find some constant $C<\infty$ depending on $p$ such that 
\[
 \sup_{\delta\in(0,1)}\sup_{z\in\mathbb R^n}
 \int_{B_1(z)}|\nabla V_\delta(y,\omega)|^\beta\,\mathrm dy
 \leq C.
\]
Since $\beta>n$, Morrey's inequality gives, uniformly in $\delta$,
\[
 |V_\delta(x,\omega)-V_\delta(y,\omega)|
 \leq C|x-y|^{1-n/\beta},
 \qquad |x-y|\leq1.
\]
Partitioning the segment from $x$ to $y$ into pieces of length at
most one and then rescaling yields
\begin{equation*}
 |V_\delta(x,\omega)-V_\delta(y,\omega)| \leq C(1+|x-y|).
\end{equation*}
Then
\begin{align*}
|V_{\delta,\epsilon}(x,\omega)
-V_{\delta,\epsilon}(y,\omega)|
=  \epsilon |V_\delta(x/\epsilon,\omega) -V_\delta(y/\epsilon,\omega)|
\leq  C\epsilon
\left(
1+\left|\frac{x-y}{\epsilon}\right|
\right) =
C(\epsilon+|x-y|).
\end{align*}
This is \eqref{eq:lip-C}. Since $V_{\delta,\epsilon}(0,\omega)=0$, we also have \eqref{eq:Vlinear-C}.

\medskip 

Moving on to \eqref{eq:erg-delta}, we first note that it is stronger than \eqref{VE1} because $\delta$ is sent to zero first. Again assumption ({\bf C}) is needed. Let $\widetilde V(x,\omega)$ be the vector field defined from $v$ via \eqref{eq:vtoV}. Then
\begin{equation*}
    \widetilde{V}_\delta(x,\omega) = (V(\cdot,\omega)*\rho_\delta)(x).
\end{equation*}
We see that $\nabla \widetilde{V}_\delta = \nabla V_\delta$, and hence
\begin{equation*}
    \widetilde{V}_\delta(x,\omega) = V_\delta(x,\omega) + \widetilde{V}_\delta(0,\omega).
\end{equation*}
For the random field $\widetilde{V}(x,\omega)$, its gradient is $\widetilde{v}(x,\omega)$ which is stationary and mean-zero, and $v \in L^\beta(\Omega)$ with $\beta > n$. By the Sobolev embedding theorem and the ergodic theorem (see, e.g., Theorem A.5 of \cite{MR2914944}), it is more or less well known that, $\mathbf{P}$-almost surely,
\begin{equation*}
    \lim_{\eps\to 0} \eps \widetilde{V}(x/\eps,\omega) = 0 \;\text{locally uniformly for $x \in \R^n$}.
\end{equation*}
Note that
\begin{equation*}
    \eps\widetilde{V}_\delta(x/\eps,\omega) = \int_{\R^n} \eps V(\frac{x}{\eps} - y,\omega)\rho_\delta(y)\,\dd y,
\end{equation*}
we also get the $\mathbf{P}$-a.s.,
\begin{equation*}
    \lim_{\epsilon\to 0}\lim_{\delta\to 0}\widetilde{V}_{\delta}(x/\eps,\omega) = 0, \qquad \;\text{locally uniformly for $x \in \R^n$}.
\end{equation*}
Since $\eps \widetilde{V}_\delta(0) \to 0$ as $\eps\to 0$, we get \eqref{eq:erg-delta}.
\end{proof}

Back to the proof of \eqref{43UP}, we first apply Lemma \ref{vVcompaison} with $\lambda = \ol{H}(p)$ and any fixed $c\in \mathscr{C}^*$. In particular, by rescaling \eqref{El44},   we have, almost surely with respect to $Q^{c, \epsilon}_{x}$,
\begin{align}\label{supersolE}
   &  \langle p, X^{\epsilon}_{t} - x \rangle - t\overline{H}(p) - \xi_{\epsilon}(t) \nonumber\\
    \leq & V_{\delta,\epsilon}(x,\omega) - V_{\delta,\epsilon}(X^{\epsilon}_{t},\omega) + r_c(\delta)t + \epsilon \langle p, L^{\alpha}_{t/\eps} \rangle  + M^{\eps,\delta}_t,
\end{align}
where where $M^{\eps,\delta}_t$ is defined in \eqref{eq:Mbfsplit} and $r_{c}(\delta)$ is a modulus of continuity in $\delta$ depending on the uniform norm of $c$. 
Applying It\^{o}'s formula to $V_{\delta,\epsilon}(X^{\epsilon}_{t},\omega)$, we get
\begin{align}\label{Vito}
    &V_{\delta,\epsilon}(X^{\epsilon}_{t},\omega) - V_{\delta,\epsilon}(x,\omega) \nonumber \\
     =& \int_{0}^{ t} \langle v_{\delta}(\tau_{\epsilon^{-1}X^{\epsilon}_{s}}\omega), c(s/\epsilon, X^{\epsilon}_{s}/\epsilon) \rangle \mathrm{d}s  + \epsilon^{\alpha-1}\int_{0}^{t} \mathscr{L} V_{\delta,\epsilon}(X^{\epsilon}_{s},\omega) \mathrm{d}s + M^{\eps,\delta}_{t}.  
\end{align}
After substituting (\ref{Vito}) into (\ref{supersolE}), we have, almost surely with respect to $Q^{c, \epsilon}_{x}$,
\begin{align}\label{supersolE1}
    &   \langle p, X^{\epsilon}_t - X^{\epsilon}_0 \rangle \nonumber - t\overline{H}(p)  - \xi_{\epsilon}(t)  \nonumber \\
    \leq & -\int_{0}^{ t} \langle v_{\delta}(\tau_{\epsilon^{-1}X^{\epsilon}_{s^-}}\omega), c(s/\epsilon, X^{\epsilon}_{s^-}/\epsilon) \rangle + \mathscr{J}\widetilde{v}_{\delta}(\epsilon^{-1}X^{\epsilon}_{s^-},\omega) \mathrm{d}s  + r_{c}(\delta)t+ \epsilon \langle p, L^{\alpha}_{t/\eps}\rangle.
\end{align}
We have seen before that $\epsilon \langle p, L^{\alpha}_{t/\eps}\rangle \to 0$ almost surely as $\epsilon \to 0$. Note also that $r_{c}(\delta)\to 0$ as $\delta \to 0$. 
For simplicity, in the remaining part of the proof we denote
\begin{equation*}
    F_{\delta,\epsilon}(s,X^{\epsilon}_{s^-},\omega) = \mathfrak{J}v_{\delta}(\tau_{\epsilon^{-1}X^{\epsilon}_{s^-}}\omega) + \langle c(\epsilon^{-1}s, \epsilon^{-1}X^{\epsilon}_{s^-}), v_{\delta}(\tau_{\epsilon^{-1}X^{\epsilon}_{s^-}}\omega)\rangle,
\end{equation*}
which is the integrand on the right hand side in \eqref{supersolE1}. As a result, in order to show (\ref{43UP}), it is enough to show that, $Q^{c, \epsilon}_{x}$-almost surely,
\begin{equation}\label{Q0}
    \lim_{\epsilon\to0}\sup_{c\in\mathscr C^*}\sup_{|x|\leq l} \sup_{0\leq t\leq T}\limsup_{\delta\to0}\mathbb E^{Q_x^{c,\epsilon}}\left(\int_0^{t\wedge\tau_\epsilon}F_{\delta,\epsilon}(s,X_{s^-}^\epsilon,\omega)\,\mathrm ds\right)^2=0.
\end{equation}

A key difference compared to the proof in the previous section is, we need to send $\delta\to 0$ first in the above equation. This is because the modulus of continuity $r_c(\delta)$ depends on $c$. In the previous section, under assumption ({\bf B}), the regularized $\hat{v}_\delta$ satisfies \eqref{IeHd} where the modulus of continuity $\widetilde{\gamma}(\delta)$ is independent of $x,t,c$, and $\delta$ is essentially fixed in the proof (sent to zero at last).

Writing the square of the integral as a double integral and viewing the resulted integrand as a conditional expectation with respect to $\mathcal{F}_s$, the filtration associated to $\{L^\alpha_\bullet\}$, we have
\begin{equation}
\label{eq:conditional-square}
\begin{aligned}
 &\mathbb E^{Q_x^{c,\epsilon}}
\left( \int_0^{t\wedge\tau_\epsilon}F_{\delta,\epsilon}(s,X_{s^{-}}^\epsilon,\omega)\mathrm ds \right)^2\\
=&2\mathbb E^{Q_x^{c,\epsilon}}
 \int_0^{t\wedge\tau_\epsilon}
 F_{\delta,\epsilon}(s,X_{s^-}^\epsilon,\omega) \mathbb E^{Q_x^{c,\epsilon}}\!\left[\int_s^{t\wedge \tau_\eps} F_{\delta,\epsilon}(r,X_{r^-}^\epsilon,\omega) \dd r\,\middle|\,\mathcal F_s\right]
 \dd s\\
 =&2\mathbb E^{Q_x^{c,\epsilon}}
 \int_0^{t\wedge\tau_\epsilon}
 F_{\delta,\epsilon}(s,X_{s^-}^\epsilon,\omega) 
\mathbb E^{Q_x^{c,\epsilon}}\!\left[V_{\delta,\epsilon}(X_{t\wedge\tau_\epsilon}^\epsilon,\omega)-V_{\delta,\epsilon}(X_s^\epsilon,\omega)\,\middle|\,\mathcal F_s \right]\dd s .
\end{aligned}
\end{equation}
where in the last step we applied \eqref{Vito} and removed the contribution of $M^{\eps,\delta}_{\bullet\wedge \tau_\eps}$ which is a martingale. 

Although the stopping time $\tau_{\epsilon,b}$ controls the pre-exit position $X^\epsilon_{\tau_{\epsilon,b}-}$, an unbounded exit jump can place $X^\epsilon_{\tau_{\epsilon,b}}$ outside every fixed ball.
We therefore need the overshoot estimate for \eqref{eq:conditional-square}. Since we fix $b$ sufficiently large, $b>l$ is assumed. Fix $R>2b$ and, for
$0\leq s\leq t\leq T$, let
\[
 G_{\epsilon,R}(s,t):=
 \left\{
 N_\epsilon((s,t]\times\{|y|>R\})=0
 \right\}.
\]
For any $r < (t\wedge \tau_\epsilon)$ we have $|X_r^\epsilon|\leq l+b$. Thus 
\[
G_{\epsilon,R}(s,t) \subset \big\{|X_{t\wedge\tau_\epsilon}^\epsilon|\leq l+b+R \big\}.
\]
Hence, on the event $G_{\eps,R}(s,t)$ we can bound $|V_{\delta,\epsilon}(X_{t\wedge\tau_\epsilon}^\epsilon,\omega)-V_{\delta,\epsilon}(X_s^\epsilon,\omega)|$ by twice of the maximum value of $|V_{\eps,\delta}(y)|$ in the ball $\{|y| \le R+b+l\}$.

On the complement set of $G_{\eps,R}(s,t)$, i.e., on $G^{\rm c}_{\eps,R}(s,t)$, a jump of size larger than $2b$ occurs for some $r\in (s,t]$. Since  $X^\eps_\bullet$ is in $\overline{B}_l(x)$ before hitting the stopping time $\tau_{\eps,b}$, we must have $\tau_{\eps,b}\le t$ and $X^\eps_{\tau_{\eps,b}^-} \in \overline{B}_l(x)$. Moreover, on this event we also have
\begin{equation*}
    |\Delta X_{t\wedge\tau_\epsilon}^\epsilon| 
 \leq  \sum_{s<r\leq t} |\Delta L_r^{\alpha.\eps}|\mathbf1_{\{| \Delta L_r^{\alpha,\eps}|>R\}} = \int_{|y|>R} |y| N_\eps((s,t],\dd y),
\end{equation*}
where $L^{\eps,\alpha}_\bullet$ denotes the rescaled process $\eps L^\alpha_{\bullet/\eps}$ and $N_\eps$ is the associated Poisson random measure. Due to \eqref{eq:lip-C}, this also leads to
\begin{equation}
    \label{E:jumppath}
    \begin{aligned}
    |V_{\delta,\epsilon}(X_{t\wedge\tau_\epsilon}^\epsilon,\omega)-V_{\delta,\epsilon}(X_s^\epsilon,\omega)| &\le |V_{\delta,\epsilon}(X_{(t\wedge\tau_\epsilon)^-}^\epsilon,\omega)-V_{\delta,\epsilon}(X_s^\epsilon,\omega)| + |\Delta V(X^\eps_{t\wedge \tau_\eps})|\\
    &\le 2\sup_{|y| \le b+l} |V_{\eps,\delta}(y)| + C\left(\eps + \int_{B_R^{\rm c}} |y| N_\eps((s,t],\dd y)\right).
    \end{aligned}
\end{equation}
Combining the above estimates, and using the fact that the compensator of $N_\eps$ is $\eps^{\alpha-1}\dd t\nu(\dd y)$, we get,
\begin{equation}\label{eq:overshoot-tail}
\begin{aligned}
 &\mathbb E^{Q_x^{c,\epsilon}}\!\left[\left|V_{\delta,\epsilon}(X_{t\wedge\tau_\epsilon}^\epsilon,\omega) - V_{\eps,\delta}(X^\eps_s)\right|
 \,\middle|\,\mathcal F_s
 \right]\\
\leq & 2\sup_{|y|\le R+b+l}|V_{\eps,\delta}(y)| + C\eps + C\mathbb{E}^{Q_x^{c,\epsilon}}\!\left[\int_{|y|>R} |y|\,N_\eps((s,t],dy) \,\Big| \, \mathcal{F}_s\right]\\
\leq & 2\sup_{|y|\le 2R}|V_{\eps,\delta}(y)| + C\eps + C\eps^{\alpha-1}(t-s)\int_{B_R^{\rm c}} |y|\nu(\dd y)\\
\le & 2\sup_{|y|\le 2R}|V_{\eps,\delta}(y)| + C_{n,\alpha,p}(\eps + \eps^{\alpha-1}(t-s)R^{1-\alpha}).
\end{aligned}
\end{equation}
Plugging this estimate into \eqref{eq:conditional-square}, we have
\begin{equation}
\label{eq:Clast}
\begin{aligned}
 & \mathbb E^{Q_x^{c,\epsilon}} \left( \int_0^{t\wedge\tau_\epsilon} F_{\delta,\epsilon}(s,X_s^\epsilon,\omega)\mathrm ds \right)^2 \nonumber \\
\leq & \left(4\sup_{|y|\le 2R}|V_{\eps,\delta}(y)| + C(\eps+T\eps^{\alpha-1}R^{1-\alpha}\right)\mathbb E^{Q_x^{c,\epsilon}}\int_0^{t\wedge \tau_\eps}|F_{\delta,\eps}(s,X^\eps_s,\omega)|\,\dd s\\
= & \left(4\sup_{|y|\le 2R}|V_{\eps,\delta}(y)| + C(\eps+T\eps^{\alpha-1}R^{1-\alpha}\right)\left[2\mathbb E^{Q_x^{c,\epsilon}}\int_0^{t\wedge \tau_\eps}(F_{\delta,\eps}(s))_+\,\dd s - \mathbb E^{Q_x^{c,\epsilon}} \int_0^{t\wedge \tau_\eps} F_{\delta,\eps}(s)\,\dd s\right].
\end{aligned}
\end{equation}
Here, $F_{\delta,\eps}(s)$ is a simplified notation for $F_{\delta,\eps}(s,X^\eps_s,\omega)$, and
\begin{equation*}
    (F_{\delta,\epsilon}(s,X^{\epsilon}_{s},\omega))_{+} = F_{\delta,\epsilon}(s,X^{\epsilon}_{s},\omega) \vee 0
\end{equation*}
denotes the positive part of $F_{\delta,\epsilon}$. Note that, using the martingale argument before (for $s=0$),
\begin{equation*}
\begin{aligned}
    \left|\mathbb E^{Q_x^{c,\epsilon}} \int_0^{t\wedge \tau_\eps} F_{\delta,\eps}(s)\,\dd s\right| &=  \left|\mathbb E^{Q_x^{c,\epsilon}}\left[V_{\eps,\delta}(X^\eps_{t\wedge \tau_\eps}) - V_{\eps,\delta}(x)\right]\right| \\
    &\le 2\sup_{|y|\le 2R}|V_{\eps,\delta}(y)| + C(\eps+T\eps^{\alpha-1}R^{1-\alpha}).
    \end{aligned}
\end{equation*}
In view of \eqref{eq:erg-delta}, for fixed $R>2b>2l$, the quantity on the right hand side above goes to zero as $\eps\to 0$, uniformly for $t\in [0,T]$, $c\in \mathscr{C}^*$, $x\in B_l(0)$ and $\delta\in (0,1)$. As a result, to verify \eqref{Q0}, it only remains to control
\begin{equation*}
    \mathbb{E}^{Q^{c, \epsilon}_{x}} \int^{t\wedge\tau_{\epsilon}}_{0} (F_{\delta,\epsilon}(s,X^{\epsilon}_{s},\omega))_{+}\mathrm{d}s = \mathbb{E}^{Q^{c, \epsilon}_{x}} \int^{t\wedge\tau_{\epsilon}}_{0} \big(\mathfrak{J}v_{\delta}(\tau_{\epsilon^{-1}X^{\epsilon}_{s}}\omega) + c(\epsilon^{-1}s, \epsilon^{-1}X^{\epsilon}_{s}), v_{\delta}(\tau_{\epsilon^{-1}X^{\epsilon}_{s}}\omega)\big)_{+}\mathrm{d}s.
    \end{equation*}
To this end, we use \eqref{eq:l54-main} to bound the integrand, and get
\begin{equation*}
\begin{aligned}
     &\mathbb{E}^{Q^{c, \epsilon}_{x}} \int^{t\wedge\tau_{\epsilon}}_{0} (F_{\delta,\epsilon}(s,X^{\epsilon}_{s},\omega))_{+}\mathrm{d}s \\
     &\qquad\qquad \le \mathbb{E}^{Q^{c, \epsilon}_{x}} \int^{t\wedge\tau_{\epsilon}}_{0} \Big(\ol{H}(p) + L(c(s/\eps,X_{s/\eps}),\tau_{X_{s/\eps}}\omega) - \langle c(s/\eps,X_{s/\eps}), p\rangle + r_c(\delta)\Big)_+\,\dd s
\end{aligned}
\end{equation*}
Since $c\in \mathscr{C}^*$, using the estimates in Lemma \ref{Cset}, we easily find a bound
\begin{equation*}
    \sup_{c\in \mathscr{C}^*} \sup_{|x| \le l} \sup_{\delta\in (0,1)} \mathbb{E}^{Q^{c, \epsilon}_{x}} \int_0^{t\wedge \tau_\eps} \big(F_{\delta,\eps}(s,X^\eps_s,\omega)\big)_+\,\dd s \le C(T,p,H,\alpha).
\end{equation*}
Hence we establish \eqref{Q0} and complete the proof under assumption ({\bf C}). 

%%%%%%%%
%%%%%%%%

\section*{Acknowledgments}
The work of WJ is partially supported by the NSFC Grant No.\,12571220 and by the New Cornerstone Investigator Program 100001127.

%%%%%%%%
%\appendix

%%%%%%%%
\bibliographystyle{abbrv}
\bibliography{Ref}

\end{document}